\newenvironment{modularrep}{%
    \begin{tikzcd}[
            cells={nodes={draw=white,line width=4pt,
                          fill=black,circle,
                          minimum size=8pt,inner sep=0pt}},
            sep=4pt]
        }{
    \end{tikzcd}
}
\DeclarePairedDelimiter\floor{\lfloor}{\rfloor}
\DeclareMathOperator{\Hom}{Hom}
\newcommand{\C}{\mathbb{C}}
\newcommand{\R}{\mathbb{R}}
\newcommand{\N}{\mathbb{Z}_{> 0}}
\newcommand{\x}{\mathbf{x}}
\newcommand{\y}{\mathbf{y}}
\newcommand{\Q}{\mathbb{Q}}
\newcommand{\Z}{\mathbb{Z}}
\newcommand\scalemath[2]{\scalebox{#1}{\mbox{\ensuremath{\displaystyle #2}}}}
\numberwithin{equation}{subsection}
\renewcommand{\thesubsection}{\thesection\Alph{subsection}}
\newtheorem{Theorem}{Theorem}
\newtheorem{Proposition}[Theorem]{Proposition}
\newtheorem{Corollary}[Theorem]{Corollary}
\newtheorem{Lemma}[Theorem]{Lemma}
\theoremstyle{remark}
\newtheorem{Remark}{Remark}
\theoremstyle{definition}
\newtheorem{Example}[Theorem]{Example}
\title{Growth Problems for Representations of Finite Groups}
\author{David He}
\address{D. He: School of Mathematics and Statistics, University of Sydney, Australia}
\email{dahe4726@uni.sydney.edu.au}
\begin{document}

\begin{abstract}
{We give a general asymptotic formula for the growth rate of the number of indecomposable summands in the tensor powers of representations of finite groups, over a field of arbitrary characteristic. In characteristic zero we obtain additional results, including an exact formula for the growth rate. We compute various examples and also provide code used to compute our formulas.}
\end{abstract}

\subjclass[2020]{Primary:
11N45, 18M05; Secondary: 20C15, 20C20.}
\keywords{Tensor products,
asymptotic behavior, group representations.}

\maketitle

\tableofcontents

\section{Introduction}
\subsection{Growth problems}
If $\mathcal{C}$ is an additive Krull--Schmidt monoidal category, following \cite{lacabanne2024asymptotics} we may define the \textit{growth problems} associated with its {additive} Grothendieck ring $K_0(\mathcal{C})$, which is a $\R_{\ge 0}$-algebra with a $\Z$-basis given by the isomorphism classes of indecomposable objects. The growth problems and related questions for various categories have been recently studied in the papers \cite{coulembier2023asymptotic,Coulembier_2023,lacabanne2023asymptotics, coulembier2024fractalbehaviortensorpowers, lachowska2024tensorpowersvectorrepresentation, lacabanne2024asymptotics}. Throughout this paper let $G$ denote a finite group, and $k$ an algebraically closed field. We continue the study of growth problems by specializing to the case where $\mathcal{C}=\text{Rep}_k(G)$ is the category of finite-dimensional $kG$-modules, and $K_0(\mathcal{C})$ is the representation ring. More precisely, for a $kG$-module $V$, we define the quantity $$b(n)=b^{G,V}(n)=\text{\# $G$-indecomposable summands in} \ V^{\otimes n } \ \text{(counted with multiplicity)}.$$
Then for us the growth problems associated with $V \in \text{Rep}_k(G)$ consist of the following questions:
\begin{enumerate}[label={(\arabic*)}]
    \item  \label{1} Can we find an explicit formula for $b(n)$?
    \item  \label{2} Can we find a nice \textit{asymptotic formula} $a(n)$ such that $b(n)\sim a(n)$? (Here $b(n)\sim a(n)$ if they are asymptotically equal: ${b(n)}/{a(n)}\xrightarrow{ n \to \infty } 1.$)
    \item \label{3}{Can we quantify the \textit{rate of convergence}, \textit{i.e.} how fast the quantity $\lvert b(n)/a(n)-1\rvert$ converges to 0?}
    \item \label{4} Can we bound the \textit{variance} $\lvert b(n)-a(n)\rvert$?
\end{enumerate}

We note that \ref{1} is much harder than \ref{2} and in general we cannot expect a closed formula for $b(n)$ to exist. However, in characteristic zero such a formula does exist and is given in Theorem \ref{exact formula}{; in this case we also obtain nice answers to questions \ref{3} and \ref{4}. Over a field of arbitrary characteristic $p\ge 0$, we will answer question \ref{2} in terms of the Brauer character table of $G$.}

\subsection{Main results}\label{char 0}

Suppose {$\text{char\ } k =p \ge 0$. Let $g_1,\dots, g_N$ be a complete set of representatives for the $p$-regular conjugacy classes of $G$ (all conjugacy classes, if $p=0$), and let $\chi_1,\dots,\chi_N$ be a complete set of irreducible (Brauer) characters of $G$. Denote by $Z_V(G)$ the subgroup of $G$ consisting of all $g\in G$ that act on $V$ by scaling, and denote by $\omega_V(g)$ the corresponding scalar. 
\begin{Theorem}\label{main asym}
Let $V$ be a faithful $kG$-module. 
\begin{enumerate}
\item the corresponding asymptotic growth rate is 
\begin{equation}\label{eqn:asym}
a(n)=\frac{1}{\lvert G\rvert}\sum_{\substack{1\le t\le N \\ g_t \in Z_V(G)}} S_{t}\big(\omega_V(g_t)\big)^n\cdot (\dim V)^n,   
\end{equation}
where $S_t$ is the sum over entries of the column corresponding to $g_t^{-1}$ in the (irreducible) Brauer character table.
\item We have $\lvert b(n)/a(n)-1\rvert \in \mathcal{O}(\lvert \lambda^{\mathrm{sec}}/\lambda \rvert^n + n^{-c})$, and $\lvert b(n)-a(n) \rvert \in \mathcal{O}(\lvert \lambda^{\mathrm{sec}}\rvert^n + n^{d})$ for some constants $c,d > 0.$ 
\end{enumerate}
\end{Theorem}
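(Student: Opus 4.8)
\medskip
\noindent\textbf{Proof proposal.}
The plan is to split off the projective part of $V^{\otimes n}$, compute $b$ on it with Brauer characters, and use a Perron--Frobenius analysis of the operator of tensoring with $V$ to show that the rest is negligible. Write $V^{\otimes n}=Q_n\oplus R_n$ with $Q_n$ projective and $R_n$ having no projective summand. The elementary input is that for a \emph{projective} module $Q$ one has $b(Q)=\langle\varphi_Q,\Sigma\rangle$, where $\varphi_Q$ is the Brauer character of $Q$, $\Sigma:=\sum_i\chi_i$ is the sum of the irreducible Brauer characters, and $\langle\,\cdot\,,\cdot\,\rangle$ is the inner product of class functions on $p$-regular classes: indeed the indecomposable summands of $Q$ are the projective indecomposables $P_i$, and $\langle\varphi_{P_i},\chi_j\rangle=\delta_{ij}$. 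Since $\varphi_{V}^{\,n}=\varphi_{Q_n}+\varphi_{R_n}$, this gives
\[
b(n)=b(Q_n)+b(R_n)=\langle\varphi_V^{\,n},\Sigma\rangle-\langle\varphi_{R_n},\Sigma\rangle+b(R_n).
\]
Now $\langle\varphi_V^{\,n},\Sigma\rangle=\frac1{|G|}\sum_{g}\varphi_V(g)^n\,\overline{\Sigma(g)}$ (sum over $p$-regular $g$), and $\varphi_V(g)$ is a sum of $\dim V$ roots of unity, so $|\varphi_V(g)|\le\dim V$ with equality exactly when $g$ acts on $V$ by a scalar; faithfulness of $V$ forces any such $g$ into $Z(G)$, so $C_G(g)=G$ and $\varphi_V(g)=\omega_V(g)\dim V$. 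Grouping by $p$-regular classes, the terms of maximal size therefore assemble to exactly $a(n)$ of \eqref{eqn:asym} (with $S_t=\overline{\Sigma(g_t)}=\sum_i\chi_i(g_t^{-1})$), while the remaining terms are $\mathcal{O}(\lambda_0^{\,n})$ for $\lambda_0:=\max_{g_t\notin Z_V(G)}|\varphi_V(g_t)|<\dim V$. As $|b(R_n)-\langle\varphi_{R_n},\Sigma\rangle|=\mathcal{O}(\dim R_n)$, the whole problem reduces to bounding $\dim R_n$.

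For this I would analyse the operator $T\colon[M]\mapsto[V\otimes M]$ on $K_0(\mathrm{Rep}_k(G))\otimes_{\Z}\C$, completed in the norm $\lVert\sum a_X[X]\rVert=\sum|a_X|\dim X$ over indecomposables. Because the structure constants of $T$ are non-negative and $\dim(V\otimes X)=(\dim V)\dim X$, the operator $T$ is bounded with $\lVert T\rVert=\dim V$, and $\dim V$ lies in its spectrum: the identity $V\otimes kG\cong(kG)^{\oplus\dim V}$ makes $[kG]$ an eigenvector, and more generally appropriate $\C$-combinations of the eigenspaces of a central $p$-regular scalar acting on $kG$ are eigenvectors with eigenvalues $\omega_V(g_t)\dim V$, $g_t\in Z_V(G)$ --- all of these being classes of \emph{projective} modules. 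Since $[V^{\otimes n}]=T^n[k]$, and the part of $\mathrm{spec}(T)$ of modulus $\dim V$ is spanned by projective eigenvectors, the spectral projection of $T^n[k]$ onto that top cluster is a virtual projective; as $[R_n]$ is supported on non-projective indecomposables, there is no cancellation, so $\dim R_n\le\lVert r_n\rVert$ for $r_n$ the complementary spectral summand, and $\lVert r_n\rVert=\mathcal{O}(n^{d}\,|\lambda^{\mathrm{sec}}|^{\,n})$ once the rest of the spectrum is known to lie in the disc of radius $|\lambda^{\mathrm{sec}}|<\dim V=\lambda$. The hard part --- and, I expect, the main obstacle --- is establishing this spectral picture: that $\dim V$ is isolated in $\mathrm{spec}(T)$, that the modulus-$\dim V$ cluster is \emph{exactly} $\{\omega_V(g_t)\dim V:g_t\in Z_V(G)\}$ and is semisimple (no Jordan block at the top, which is why $a(n)$ carries no polynomial factor), and that $\lambda^{\mathrm{sec}}$ is the next largest modulus. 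This is a Perron--Frobenius statement for the non-negative operator $T$ restricted to the indecomposables occurring in the $V^{\otimes n}$: faithfulness of $V$ should provide the necessary irreducibility, the cyclic group of central $p$-regular scalars being the only obstruction to aperiodicity (and accounting for the roots of unity $\omega_V(g_t)$), and the real work is making Perron--Frobenius rigorous for a possibly infinite-rank operator of non-semisimple type. When $kG$ is semisimple or of finite representation type (in particular in characteristic zero) this is finite-dimensional and elementary; there $R_n=0$, and the argument recovers the exact formula of Theorem~\ref{exact formula}.

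Granting the spectral input, the stated estimates follow quickly. From the two displays, $b(n)-a(n)=\mathcal{O}(\lambda_0^{\,n})+\mathcal{O}(\dim R_n)=\mathcal{O}(|\lambda^{\mathrm{sec}}|^{\,n}+n^{d})$, using that $\lambda_0$ is itself an eigenvalue of $T$ and hence $\lambda_0\le|\lambda^{\mathrm{sec}}|$. For the relative bound, $b(n)\ge b(Q_n)\ge\dim Q_n/|G|=\big((\dim V)^n-\dim R_n\big)/|G|\ge c\,(\dim V)^n$ for large $n$ and some $c>0$ (as $\dim P_i\le|G|$ and $\dim R_n=o((\dim V)^n)$), so $a(n)=b(n)-\mathcal{O}(|\lambda^{\mathrm{sec}}|^n+n^{d})$ also satisfies $a(n)\asymp(\dim V)^n$; dividing gives $|b(n)/a(n)-1|\in\mathcal{O}(|\lambda^{\mathrm{sec}}/\lambda|^{\,n}+n^{-c})$ for any $c>0$ --- the polynomial term being dominated by $(\dim V)^{-n}$ once $\dim V\ge 2$, while $\dim V=1$ forces $G$ cyclic with $kG$ semisimple, where the claim is immediate.
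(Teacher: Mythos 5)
Your strategy---write $V^{\otimes n}=Q_n\oplus R_n$ with $Q_n$ projective, compute $b(Q_n)=\langle\varphi_V^n,\Sigma\rangle-\langle\varphi_{R_n},\Sigma\rangle$ from Brauer orthogonality, and then bound $\dim R_n$ spectrally---is genuinely different from the paper's. The first half is sound: $|\varphi_V(g)|=\dim V$ forces $g$ to act by a scalar, faithfulness then puts $g\in Z(G)$ (and $Z_V(G)$ contains no nontrivial $p$-elements), so the maximal-modulus terms in $\frac1{|G|}\sum_{g}\varphi_V(g)^n\overline{\Sigma(g)}$ assemble exactly to \eqref{eqn:asym} with $S_t=\overline{\Sigma(g_t)}=\sum_i\chi_i(g_t^{-1})$. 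In the semisimple case $R_n=0$ and this is precisely the referee's character-theoretic proof of Theorem~\ref{exact formula} recorded in Remark~\ref{alt proof}, so your observation there is on target.

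However, the second half is a sketch of a program, not a proof. The spectral picture you defer---that $\dim V$ is spectrally isolated, that the modulus-$\dim V$ part of $\mathrm{spec}(T)$ is exactly $\{\omega_V(g)\dim V : g\in Z_V(G)\}$ and is semisimple with projective eigenvectors, and that $\lambda^{\mathrm{sec}}$ governs the remainder---is precisely the content of \cite[Theorem~5.10]{lacabanne2024asymptotics}, which the paper cites to do exactly this heavy lifting. The paper then only needs Lemma~\ref{correspondence lemma}, which produces explicit left and right eigenvectors of the action matrix $M$ from Brauer characters of simples and of projective covers, to convert the LTV asymptotic into the explicit formula \eqref{eqn:asym}. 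So you have re-created the division of labour but have re-stated, rather than proved, the key input; ``making Perron--Frobenius rigorous for a possibly infinite-rank operator of non-semisimple type'' is not a detail to fill in but the theorem being used. Two further points: (i) your claim $\lVert r_n\rVert=\mathcal{O}(n^d|\lambda^{\mathrm{sec}}|^n)$ is too strong in the infinite case---if $\lambda^{\mathrm{sec}}=0$ it would force $R_n=0$, which fails (see Remark~\ref{infinite sec}); the correct statement from the LTV theory is the additive bound $\mathcal{O}(|\lambda^{\mathrm{sec}}|^n+n^d)$, which is what the paper records. (ii) Your ``no cancellation'' step ($\dim R_n\le\lVert r_n\rVert$) does work, but only because $p_n$ and $[Q_n]$ are supported on projectives while $[R_n]$ is supported on non-projectives, so the two contributions to $r_n$ have disjoint supports; this should be said explicitly. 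Granting the LTV theorem, your route via $\langle\varphi_V^n,\Sigma\rangle$ is a nice complement to the paper's eigenvector computation, but as written it does not constitute an independent proof of part~(2).
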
 
\begin{Remark}
If $p\nmid \lvert G\rvert$, the Brauer character table coincides with the ordinary character table. The column sums of the ordinary character table are integers, unchanged by complex conjugation, and so $S_t$ is just the sum over entries of the column corresponding to $g_t$. It follows that when $p\nmid \lvert G\rvert$ our result is just \cite[(2A.1)]{lacabanne2023asymptotics} written in a different form. We therefore generalize \cite[(2A.1)]{lacabanne2023asymptotics}, which deals with the $p=0$ case and in turn generalizes \cite[Proposition 2.1]{coulembier2023asymptotic}.\end{Remark}
Recall that if $V$ is faithful, $Z_V(G)$ is a subgroup of the center $Z(G)$. We immediately obtain the following: \begin{Corollary}\label{p group} \
\begin{enumerate} 
    \item If $Z(G)$ is either trivial or a $p$-group (where $p> 0$ is the characteristic) and $V$ is faithful, then \begin{equation}\label{eqn: no period}
a(n)=\frac{1}{\lvert G \rvert}\sum_{t=1}^N \chi_t(1) \cdot (\dim V)^n.      
\end{equation}
\item In particular, if $G$ is a p-group and $V$ is faithful, then \begin{equation}\label{eqn: p group}
a(n)=\frac{1}{\lvert G\rvert} (\dim V)^n.    
\end{equation} 
\end{enumerate} 
\end{Corollary}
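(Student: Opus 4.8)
The plan is to read off both parts directly from Theorem~\ref{main asym}(1) by determining which terms survive in the sum \eqref{eqn:asym}. The key observation is that the identity class always contributes (since $1\in Z_V(G)$, acting by the scalar $\omega_V(1)=1$), and under the stated hypotheses it is the \emph{only} class that does.

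For part (1): since $V$ is faithful we have $Z_V(G)\le Z(G)$, as recalled just before the statement. If $Z(G)$ is trivial, then so is $Z_V(G)$, and the sum in \eqref{eqn:asym} runs over the single index $t$ with $g_t=1$. If instead $Z(G)$ is a $p$-group, then $Z_V(G)$ is a $p$-group as well; but each $g_t$ is $p$-regular, so any $g_t$ lying in $Z_V(G)$ has order simultaneously coprime to $p$ and a power of $p$, forcing $g_t=1$. In either case \eqref{eqn:asym} collapses to the term indexed by the identity class, where $\bigl(\omega_V(g_t)\bigr)^n=1^n=1$ and $S_1$ is the sum of the entries of the column of the Brauer character table labelled by $1^{-1}=1$, namely $\sum_{t=1}^N\chi_t(1)$. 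This is exactly \eqref{eqn: no period}.

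For part (2): if $G$ is itself a $p$-group then $Z(G)$ is a (nontrivial) $p$-group, so part (1) applies and gives $a(n)=\tfrac1{\lvert G\rvert}\sum_{t=1}^N\chi_t(1)\,(\dim V)^n$. It then remains to invoke the classical fact that over an algebraically closed field of characteristic $p$ a finite $p$-group has a unique irreducible module, the trivial one (equivalently, the augmentation ideal of $kG$ is nilpotent). Hence $N=1$ and $\chi_1(1)=1$, yielding \eqref{eqn: p group}.

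I do not expect a genuine obstacle here: the argument is bookkeeping once Theorem~\ref{main asym} is available, and the only external inputs are two standard facts about $p$-groups in characteristic $p$ (a $p$-regular element of a $p$-group is trivial; a $p$-group has a unique simple module). The one point requiring a little care is justifying the collapse of the sum to the identity term — that is, checking both that the identity class is always present among the $g_t$ and that the hypothesis on $Z(G)$ genuinely rules out every other $g_t\in Z_V(G)$.
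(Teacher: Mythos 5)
Your proof is correct and follows essentially the same route as the paper: both hinge on the observation that a $p$-regular element lying in a trivial or $p$-group center must be the identity, so the sum in \eqref{eqn:asym} collapses to its identity term, combined with the standard fact that a $p$-group in characteristic $p$ has only the trivial simple module. Your write-up is in fact slightly more explicit than the paper's, which simply invokes Lemma~\ref{correspondence lemma} to assert a unique eigenvalue of maximal modulus and leaves the reduction of the formula implicit.
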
 }

{Let us assume now that $p\nmid \lvert G\rvert$ and $kG$ is semisimple.} The following results hold over any algebraically closed field and without loss of generality let $k = \C$. If $\chi_V$ is the character of $V$, let $\chi_{\text{sec}}$ denote any second largest character value of $\chi_V$ in terms of modulus (the largest being $\chi_V(1)=\dim V$). In the following result there is no requirement that $V$ be faithful: 
{\begin{Theorem}\label{exact formula}
 For a $\C G$-module $V$ with character $\chi$, the corresponding growth rate is $$b(n) = \frac{1}{\vert G\vert} \sum_{t=1}^N \vert C_t\vert S_t \big(\chi(g_t)\big)^n$$ where $S_t$ is the sum over entries in column of the character table corresponding to $g_t$. Moreover, with $a(n)$ as before, we have
 \begin{enumerate}
     \item {$\lvert b(n)/a(n)-1\rvert \in \mathcal{O}\big((\lvert {\chi_{\mathrm{sec}}}\rvert / {\dim V})^n\big)$. Here we use the usual capital O notation.} 
\item $\lvert b(n)-a(n)\rvert \in \mathcal{O}\big((\lvert \chi_{\mathrm{sec}}\rvert)^n\big)$. 
 \end{enumerate}
\end{Theorem}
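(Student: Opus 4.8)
The plan is to use that $\mathrm{Rep}_\C(G)$ is semisimple (since $p\nmid\lvert G\rvert$) to get an exact expression for $b(n)$, and then read off the two estimates by separating the dominant terms. Write $V^{\otimes n}\cong\bigoplus_{i=1}^N m_i(n)L_i$, where $L_1,\dots,L_N$ are the irreducible $\C G$-modules; by orthogonality of characters, $m_i(n)=\langle\chi^n,\chi_i\rangle=\tfrac1{\lvert G\rvert}\sum_{g\in G}\chi(g)^n\overline{\chi_i(g)}$, so that
$$b(n)=\sum_{i=1}^N m_i(n)=\frac1{\lvert G\rvert}\sum_{g\in G}\chi(g)^n\,\overline{S(g)},\qquad S(g):=\sum_{i=1}^N\chi_i(g).$$
Complex conjugation sends $\chi_i$ to the character of the dual module $L_i^\ast$ and so permutes the $\chi_i$; hence $S$ is real-valued, $\overline{S(g)}=S(g)$, and grouping the sum over conjugacy classes with $S_t=S(g_t)$ yields the closed formula $b(n)=\tfrac1{\lvert G\rvert}\sum_{t=1}^N\lvert C_t\rvert S_t\,\chi(g_t)^n$.

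For the asymptotic statements I would first record the elementary fact that the eigenvalues of $g$ on $V$ are roots of unity, so $\lvert\chi(g)\rvert\le\dim V$ with equality precisely when $g$ acts on $V$ by a scalar, i.e. $g\in Z_V(G)$, while every other class satisfies $\lvert\chi(g_t)\rvert\le\lvert\chi_{\mathrm{sec}}\rvert<\dim V$. Splitting the closed formula accordingly, the terms with $g_t\in Z_V(G)$ sum to exactly $a(n)$ (the sub-sum $\tfrac1{\lvert G\rvert}\sum_{g_t\in Z_V(G)}\lvert C_t\rvert S_t\chi(g_t)^n$, which reduces to \eqref{eqn:asym} when $V$ is faithful), while the remaining terms are bounded in modulus by $\tfrac1{\lvert G\rvert}\sum_{g_t\notin Z_V(G)}\lvert C_t\rvert\,\lvert S_t\rvert\,\lvert\chi_{\mathrm{sec}}\rvert^n\le C\,\lvert\chi_{\mathrm{sec}}\rvert^n$ for a constant $C=C(G)$. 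This gives part (2).

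Part (1) follows from part (2) once $a(n)$ is bounded below by a positive constant times $(\dim V)^n$; I expect this to be the one delicate point, since a priori the oscillatory sum $\sum_{g\in Z_V(G)}S(g)\omega_V(g)^n$ could be destroyed by cancellation. The plan is a restriction-of-characters computation: setting $H:=Z_V(G)$ and using $\lvert\omega_V(h)\rvert=1$, one has $\sum_{h\in H}\chi_i(h)\omega_V(h)^n=\lvert H\rvert\,\langle\chi_i|_H,\,\omega_V^{-n}\rangle_H$, where $\omega_V^{-n}$ denotes the $1$-dimensional character $h\mapsto\omega_V(h)^{-n}$ of $H$; summing over $i$ and using $\chi(g)=\omega_V(g)\dim V$ on $H$ gives
$$a(n)=\frac{(\dim V)^n}{\lvert G\rvert}\,\lvert H\rvert\sum_{i=1}^N\big\langle\chi_i|_H,\,\omega_V^{-n}\big\rangle_H.$$
To see that the last sum is at least $1$, pick any irreducible summand $L_j$ of the nonzero module $(V^\ast)^{\otimes n}$: since $H$ acts on $V^\ast$, and hence on $(V^\ast)^{\otimes n}$, by the scalar $\omega_V^{-n}$, the restriction $L_j|_H$ is a sum of copies of $\omega_V^{-n}$, so $\langle\chi_j|_H,\omega_V^{-n}\rangle_H=\dim L_j\ge1$. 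Therefore $a(n)\ge\tfrac{\lvert H\rvert}{\lvert G\rvert}(\dim V)^n>0$, and combining with part (2), $\lvert b(n)/a(n)-1\rvert=\lvert b(n)-a(n)\rvert/a(n)\le C\lvert G\rvert\big(\lvert\chi_{\mathrm{sec}}\rvert/\dim V\big)^n$. (When $\chi$ has no second-largest modulus value, $G=Z_V(G)$, $b(n)=a(n)$, and both bounds hold trivially.)
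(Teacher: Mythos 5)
Your derivation of the closed formula is correct and is essentially the character-theoretic route recorded in Remark \ref{alt proof}: you expand $b(n)=\sum_i\langle\chi^n,\chi_i\rangle$, identify the column sum $S(g)$, and use that duality permutes the irreducible characters to get $\overline{S(g)}=S(g)$ (the paper instead uses that column sums are rational integers, via the Galois/algebraic-integer argument, and its primary proof phrases the same computation as an eigendecomposition of the action matrix $M^n=\sum_t\chi(g_t)^nP_t$ using Lemma \ref{correspondence lemma}; all three are the same calculation). Where you genuinely diverge is in parts (1) and (2): the paper simply cites Theorem \ref{main asym}, which in turn rests on \cite[Theorem 5.10]{lacabanne2024asymptotics}, whereas you prove the bounds directly from the closed formula by splitting off the classes in $Z_V(G)$ (whose terms are exactly $a(n)$, since $\lvert\chi(g)\rvert=\dim V$ iff $g\in Z_V(G)$) and bounding the remainder by $C\lvert\chi_{\mathrm{sec}}\rvert^n$. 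You correctly isolate the one nontrivial point, namely that $a(n)$ cannot vanish by cancellation, and your argument for it is sound: $\sum_{h\in H}S(h)\omega_V(h)^n=\lvert H\rvert\sum_i\langle\chi_i|_H,\omega_V^{-n}\rangle_H$ is a sum of non-negative multiplicities, and any irreducible constituent of $(V^\ast)^{\otimes n}$ restricts to $H$ as copies of $\omega_V^{-n}$, forcing the sum to be at least $1$ and hence $a(n)\ge\tfrac{\lvert H\rvert}{\lvert G\rvert}(\dim V)^n$. This makes the error estimates self-contained and elementary, at the cost of being specific to the semisimple case; the paper's citation-based route buys uniformity with the modular setting of Theorem \ref{main asym}. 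Your handling of the degenerate case $Z_V(G)=G$ (no $\chi_{\mathrm{sec}}$) is also appropriate. I see no gap.
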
}

Now suppose $V$ is faithful, and denote the {periodic} expression preceding $(\dim V)^n$ in \eqref{eqn:asym} by $c_V(n)$, so that $a(n)=c_V(n)\cdot (\dim V)^n$. Clearly the dimension depends on $V$, but we will prove that $c_V(n)$ is independent of the particular $\C G$-module in the following sense:
\begin{Theorem}\label{independence}
If $V$ and $W$ are two faithful $\C G$-modules such that $Z_V(G)=Z_W(G)$, then $c_V(n)=c_W(n)$.   
\end{Theorem}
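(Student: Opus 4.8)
The plan is to show that the periodic factor
\[
c_V(n)=\frac{1}{|G|}\sum_{g\in Z_V(G)}S_g\,\omega_V(g)^n ,
\]
where $S_g$ is the column sum of the character table at $g$ (equivalently at $g^{-1}$) and the sum ranges over $Z_V(G)$ because faithfulness forces $Z_V(G)\subseteq Z(G)$ and each central element is its own conjugacy class, depends on $V$ only through the subgroup $Z_V(G)$. First I would record that, since $V$ is faithful, the homomorphism $\omega_V\colon Z_V(G)\to\C^{\times}$ is injective: an element acting by the scalar $1$ acts trivially, hence equals $1$. Thus $Z:=Z_V(G)=Z_W(G)$ is cyclic, and, its image being the unique subgroup of $\C^{\times}$ of order $m:=|Z|$, both $\omega_V$ and $\omega_W$ are isomorphisms $Z\xrightarrow{\ \sim\ }\mu_m$ onto the group of $m$-th roots of unity. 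Hence $\sigma:=\omega_V^{-1}\circ\omega_W$ is a well-defined automorphism of $Z$, and since $\omega_W=\omega_V\circ\sigma$, substituting $h=\sigma(g)$ in the displayed sum for $c_W(n)$ gives $c_W(n)=\frac{1}{|G|}\sum_{h\in Z}S_{\sigma^{-1}(h)}\,\omega_V(h)^n$. As $\sigma^{-1}$ is again an automorphism of the cyclic group $Z$, it is enough to prove that the column sums $S_g$ for $g\in Z$ are invariant under $\mathrm{Aut}(Z)$.

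Next I would identify automorphisms of $Z$ with Galois actions. Since $Z$ is cyclic of order $m$, any $\alpha\in\mathrm{Aut}(Z)$ is the power map $g\mapsto g^{k}$ for some $k$ with $\gcd(k,m)=1$; by the Chinese Remainder Theorem choose $k'\equiv k\pmod m$ with $\gcd(k',|G|)=1$, and then $\alpha(g)=g^{k'}$ for all $g\in Z$ because $\mathrm{ord}(g)\mid m$. Let $\tau\in\mathrm{Gal}\bigl(\Q(\zeta_{|G|})/\Q\bigr)$ be the automorphism with $\tau(\zeta_{|G|})=\zeta_{|G|}^{\,k'}$. The standard fact $\tau\bigl(\chi(g)\bigr)=\chi(g^{k'})$ for every character $\chi$ and every $g\in G$ — both sides equal $\sum_j\epsilon_j^{\,k'}$, where the $\epsilon_j$ are the eigenvalues of $g$ — then gives, on summing over the irreducibles,
\[
S_{\alpha(g)}=S_{g^{k'}}=\sum_{i=1}^{N}\chi_i(g^{k'})=\tau\!\Bigl(\sum_{i=1}^{N}\chi_i(g)\Bigr)=\tau(S_g)\qquad (g\in Z).
\]

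It remains to see $\tau(S_g)=S_g$, which is where the only real content lies: the class function $\Theta:=\sum_{i=1}^{N}\chi_i$ is the character of a genuine $\C G$-module (the direct sum of the distinct irreducibles), and it is fixed by every element of $\mathrm{Gal}\bigl(\Q(\zeta_{|G|})/\Q\bigr)$ since the Galois group merely permutes $\{\chi_1,\dots,\chi_N\}$. Hence each value $S_g=\Theta(g)$ is a Galois-fixed algebraic integer, i.e.\ a rational integer, so $\tau(S_g)=S_g$. Combining the last two displays with the first paragraph yields $c_W(n)=\frac{1}{|G|}\sum_{h\in Z}S_h\,\omega_V(h)^n=c_V(n)$; note that the same rationality gives $S_{g^{-1}}=\overline{S_g}=S_g$, which is why it did not matter above whether $S_t$ in \eqref{eqn:asym} is read off the column of $g_t$ or of $g_t^{-1}$. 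I expect no genuine obstacle beyond correctly invoking this integrality (equivalently, Galois-invariance) of the column sums; the passage through $\mu_m$, the CRT lift of $k$, and the substitution are routine.
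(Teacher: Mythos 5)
Your proof is correct and follows essentially the same strategy as the paper's: both reduce the claim to showing that the column sums $S_g$ for $g\in Z_V(G)$ are invariant under a suitable Galois-induced permutation of $Z_V(G)$, and both derive that invariance from the fact that the Galois group permutes the irreducible characters, equivalently that column sums are rational integers. Your write-up is somewhat more explicit than the paper's about the bookkeeping — you identify the needed permutation as the automorphism $\sigma=\omega_V^{-1}\circ\omega_W$ of $Z_V(G)$ and carry out a CRT lift to realize it as a Galois element, steps the paper leaves implicit — but the underlying argument is the same.
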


\begin{Remark}
The assumption that $Z_V(G)=Z_W(G)$ is necessary: {for example,} if $G=D_8$ is the dihedral group of order $8$ and $V= \C G$ is the regular representation, then $V$ is faithful, $Z_V(G)$ is trivial, and $a(n)=8^n$. However, for {any (two-dimensional)} faithful irreducible $kG$-module $W$ we have $a(n)=(3/4+(-1)^n/4) \cdot 2^n$ and so $c_V(n)\neq c_W(n)$.    
\end{Remark}

Recall that if $G$ has a faithful irreducible $\C G$-module $V$, then $Z(G)=Z_V(G)$ and $Z(G)$ is cyclic. Let $C_d$ denote the cyclic group of order $d$. Specializing Theorem \ref{independence} to faithful irreducible $\C G$-modules now gives the following:
\begin{Theorem} \label{irr faithful}
Let $V$ be a faithful irreducible $\C G$-module, and suppose $Z(G)\cong C_d, d\ge 1$. Then we have 
\begin{equation}\label{center roots}
a(n)=\Big(\frac{1}{\lvert G \rvert}\sum_{i=1}^d(\omega^i)^n S_{g^i}\Big) \cdot (\dim V)^n,    
\end{equation} where $\omega$ is any primitive $d$-th root of unity, $g$ is a generator of $Z(G)$, {and $S_{g^i}$ is the sum over the column corresponding to $g^i$}. Thus, if $V,W$ are two faithful irreducible $\C G$-modules, then $c_V(n)=c_W(n)$. Moreover, if $g^i$ and $g^j$ have the same order, then $S_{g^i}=S_{g^j}$.     
\end{Theorem}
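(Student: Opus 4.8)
The plan is to specialize Theorem~\ref{main asym}(1) and then manipulate column sums, the one genuinely new ingredient being a Galois-theoretic identity. \emph{Setting up.} Faithfulness of $V$ gives $Z_V(G)\subseteq Z(G)$, while irreducibility and Schur's lemma give $Z(G)\subseteq Z_V(G)$, so $Z_V(G)=Z(G)=\langle g\rangle\cong C_d$. Put $\zeta=\omega_V(g)$; the assignment $g^i\mapsto\zeta^i$ is a homomorphism $\langle g\rangle\to\C^\times$, injective because $V$ is faithful, so $\zeta$ is a primitive $d$-th root of unity and $\omega_V(g^i)=\zeta^i$ for each $i$. Each $g^i$ is central, hence a one-element conjugacy class, so the classes contained in $Z_V(G)$ are precisely $g^1,\dots,g^d$ (with $g^d=1$). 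Feeding this into \eqref{eqn:asym}, and recalling that the coefficient attached to the class of $g^i$ is the sum $S_{g^{-i}}$ over the column of $(g^i)^{-1}=g^{-i}$, I get
\[
a(n)=\frac{1}{\lvert G\rvert}\sum_{i=1}^{d}S_{g^{-i}}\,\zeta^{\,in}\,(\dim V)^n
=\frac{1}{\lvert G\rvert}\sum_{i=1}^{d}S_{g^{i}}\,(\zeta^{-1})^{\,in}\,(\dim V)^n,
\]
the second equality by the substitution $i\mapsto -i\bmod d$ together with $\zeta^{d}=1$. As $\zeta^{-1}$ is again a primitive $d$-th root of unity, \eqref{center roots} already holds for this particular $\omega$, and we record $c_V(n)=\frac{1}{\lvert G\rvert}\sum_{i=1}^{d}S_{g^{i}}(\zeta^{-1})^{in}$.

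\emph{Column sums depend only on the order.} This is the step I expect to carry the real content (though it is classical). Let $m=\exp(G)$ and $\Gamma=\mathrm{Gal}(\Q(\zeta_m)/\Q)$. For $\sigma\in\Gamma$ with $\sigma(\zeta_m)=\zeta_m^{k}$ one has the standard facts that $\sigma(\chi(h))=\chi(h^{k})$ for every character $\chi$ and every $h\in G$, and that $\chi\mapsto\sigma\circ\chi$ permutes the irreducible characters. Applying $\sigma$ to $S_h=\sum_{\chi}\chi(h)$ and comparing the two resulting descriptions of $\sigma(S_h)$ gives $S_h=S_{h^{k}}$ whenever $\gcd(k,m)=1$. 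Now if $g^i$ and $g^j$ have the same order $e$, then both generate the unique subgroup of order $e$ of $\langle g\rangle$, so $g^j=(g^i)^{k}$ for some $k$ coprime to $e$; choosing $\tilde k\equiv k\pmod e$ with $\gcd(\tilde k,m)=1$ (possible because the reduction $(\Z/m)^\times\to(\Z/e)^\times$ is surjective) yields $S_{g^i}=S_{(g^i)^{\tilde k}}=S_{(g^i)^{k}}=S_{g^j}$. This is the final assertion of the theorem.

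\emph{Independence of the root, and of $V$.} Let $\omega$ be an arbitrary primitive $d$-th root of unity and write $\omega=(\zeta^{-1})^{k}$ with $\gcd(k,d)=1$. Then $\omega^{in}=(\zeta^{-1})^{kin}$, and re-indexing through $j\equiv ki\pmod d$ (so $i\equiv k^{-1}j\bmod d$, whence $\gcd(i,d)=\gcd(j,d)$ and $g^{i}$, $g^{j}$ have equal order) produces the sum $\frac{1}{\lvert G\rvert}\sum_{j=1}^{d}S_{g^{k^{-1}j}}(\zeta^{-1})^{jn}$; by the previous step $S_{g^{k^{-1}j}}=S_{g^{j}}$, so
\[
\frac{1}{\lvert G\rvert}\sum_{i=1}^{d}S_{g^{i}}\,\omega^{\,in}
=\frac{1}{\lvert G\rvert}\sum_{j=1}^{d}S_{g^{j}}\,(\zeta^{-1})^{\,jn}
=c_V(n),
\]
the last equality by the first step. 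Thus \eqref{center roots} holds for every primitive $d$-th root of unity $\omega$. Since its right-hand side makes no reference to $V$, it is the same for any two faithful irreducible $\C G$-modules $V,W$, so $c_V(n)=c_W(n)$ (alternatively, for such modules $Z_V(G)=Z(G)=Z_W(G)$ and one simply quotes Theorem~\ref{independence}).
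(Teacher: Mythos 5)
Your proof is correct and follows essentially the same route as the paper: it specializes Theorem~\ref{main asym} to the singleton central classes and then invokes the Galois action on the character table (the row action fixing column sums, the column action relating central elements of equal order in the cyclic center), which is exactly the mechanism of the paper's proof of Theorem~\ref{independence} that the paper's own proof of this theorem cites. You merely make explicit some bookkeeping the paper leaves implicit, such as the $g_t^{-1}$ substitution and the surjectivity of $(\Z/m)^\times\to(\Z/e)^\times$ needed to pass from ``same order'' to a genuine Galois column action.
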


\subsection{Examples}
To illustrate our results we discuss also various examples. In characteristic zero, we compute exact and asymptotic formulas for the dihedral groups, symmetric groups, $\text{SL}(2,q)$, and a family of semidirect products $C_{p^k}\rtimes C_{p^j}$, discussing also the {rate} of convergence and variance. In {modular} characteristic, we study {$\text{SL}(2,q)$, $\text{GL}(2,q)$, cyclic groups of prime order, and the Klein four group.} In the first two cases, we obtain a {formula} for $a(n)$. The last two cases are $p$-groups so the asymptotic formulas follow from {Corollary \ref{p group}}, and we obtain in addition {bounds on the rate} of convergence and variance for each indecomposable $kG$-module.  We have used Magma (\cite{bosma1997magma}) to obtain and verify our results. We explain how we have used Magma in the \hyperref[appendix]{Appendix}.\\

\noindent\textbf{Acknowledgments.}
The work here was undertaken by the author as an Honours research project at the University of Sydney under the supervision of Dani Tubbenhauer. The author thanks Dani for their generous support and guidance. {The author also thanks the referee for very helpful comments which have improved the quality of the paper, and for providing an insightful alternative proof of Theorem \ref{exact formula} (see Remark \ref{alt proof}).}  

\section{Proof of main results}\label{gen section}

\subsection{Exact and asymptotic formulas}
{We first recall the basic set-up for growth problems in $\text{Rep}_k(G)$}. For $V$ a $kG$-module, recall that the \textit{fusion graph} of $V$ is the (potentially countably infinite) directed and weighted graph whose vertices are the indecomposable $kG$-modules which appear in $V^{\otimes n}$ for some $n$, such that there is an edge with weight $m$ from the vertex ${V_i}$ to the vertex ${V_j}$ if ${V_j}$ occurs with multiplicity $m$ in $V\otimes {V_i}$. The corresponding adjacency matrix is called the \textit{action matrix}. If $V$ is faithful, the fusion graph of $V$ contains a unique strongly-connected component $\Gamma^p$ consisting of all projective indecomposable $kG$-modules, which we call the \textit{projective cell} ({following terminology in the proof of \cite[Proposition 4.22]{lacabanne2024asymptotics}).There is no path leaving $\Gamma^p$, since the tensor product of any $kG$-module with a projective $kG$-module is projective.} {We note that $\Gamma^p$ has only finitely many vertices.} Since $\Gamma^p$ is strongly-connected, its adjacency graph is a non-negative irreducible matrix with a well-defined period $h$, which we call the \textit{period} of $\Gamma^p$.  

\begin{Example}
If $k=\C$ and $\chi_1,\dots,\chi_N$ are all the irreducible characters of $G$, the action matrix ${M=M(V)}$ for a faithful $\C G$-module $V$ with character $\chi$ has entries $M_{ij}=\langle \chi  \chi_j,\chi_i\rangle$. In this case the projective cell is the whole fusion graph consisting of all irreducible $\C G$-modules. 
\end{Example}

As explained in the proof of \cite[Theorem 5.10]{lacabanne2024asymptotics}, if $h$ is the period of $\Gamma^p$, $M$ is the action matrix, and $\zeta=\exp(2\pi i h)$, then $\zeta^i(\dim V)$ is an eigenvalue of $M$ for $0\le i\le h-1$. These are all the eigenvalues of maximal modulus, {and they are all simple eigenvalues when $V$ is faithful. Note that clearly $h$ is bounded above by the number of simple $kG$-modules up to isomorphism.} 
 
{We are now ready to prove Theorem \ref{main asym}. As in the introduction let $g_1\dots, g_N$ be a complete set of representative for the $p$-regular conjugacy classes (all conjugacy classes if $k=0$), and let $\chi_1\dots \chi_N$ denote the irreducible (Brauer) characters. We fix an ordered basis $\begin{pmatrix}
V_1 & V_2 & \dots    
\end{pmatrix}$ for the action matrix $M(V)$ such that the first $N$ elements are the projective indecomposables $P_1=V_1,\dots, P_N=V_N$ where for $1\le t \le N$, $P_t$ is the projective cover of the module with character $\chi_t$. If $W$ is a $kG$-module, we denote its character by $\chi_W$.}
{\begin{Lemma}\label{correspondence lemma}
Let $V$ be a $kG$-module with (Brauer) character $\chi_V$, and let $M$ be the corresponding action matrix. Then for $1\le t \le N$, $\chi_V(g_t)$ is an eigenvalue of $M$, with corresponding right eigenvector $v_t:=\overline{\begin{pmatrix}
\chi_1(g_t) & \dots & \chi_N(g_t) & 0 & \dots    
\end{pmatrix}}$ and left eigenvector $w_t^T:=\begin{pmatrix}
\chi_{P_1}(g_t) & \dots & \chi_{P_N}(g_t) & \chi_{V_{N+1}}(g_t) & \dots     
\end{pmatrix}$   
\end{Lemma}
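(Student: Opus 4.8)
The plan is to verify directly that the claimed vectors are right/left eigenvectors by computing the matrix-vector products entrywise, using the orthogonality relations for Brauer characters and the fact that the entries of $M$ record tensor-product multiplicities. Recall that for a $p$-regular element $g$, the Brauer character of a tensor product is the pointwise product $\chi_{W\otimes W'}(g)=\chi_W(g)\chi_{W'}(g)$, and that the decomposition of $W$ into indecomposables induces an expression of $\chi_W$ as a non-negative integral combination of the characters $\chi_{V_i}$; in particular $\chi_{V\otimes V_i}(g_t)=\chi_V(g_t)\chi_{V_i}(g_t)=\sum_j M_{ij}\,\chi_{V_j}(g_t)$, since $M_{ij}$ counts the multiplicity of $V_j$ in $V\otimes V_i$. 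This last identity is exactly the statement that $w_t^T$, the row vector of Brauer character values $\big(\chi_{V_i}(g_t)\big)_i$, satisfies $w_t^T M = \chi_V(g_t)\, w_t^T$, i.e. $w_t$ is a left eigenvector with eigenvalue $\chi_V(g_t)$. Note the first $N$ entries of $w_t^T$ are $\chi_{P_1}(g_t),\dots,\chi_{P_N}(g_t)$ as stated, since $V_1,\dots,V_N$ are by our choice of ordered basis the projective covers $P_1,\dots,P_N$.

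For the right eigenvector, I would argue that $v_t$ with entries $\overline{\chi_i(g_t)}$ in the first $N$ coordinates and $0$ elsewhere satisfies $M v_t = \chi_V(g_t)\, v_t$. The top $N$ rows of $M$ describe $V\otimes P_s$ for $1\le s\le N$: since tensoring with a projective yields a projective, $V\otimes P_s$ decomposes into the projective indecomposables $P_1,\dots,P_N$ only, so the $s$-th row of $M$ is supported on the first $N$ columns, with $M_{sr}=[\,V\otimes P_s : P_r\,]$. The key input is the reciprocity $[\,V\otimes P_s:P_r\,]=[\,P_s : V^\vee\otimes P_r\,]$ together with the fact that the Cartan/decomposition pairing identifies projective characters with the dual basis to the simple Brauer characters under the bilinear form $\langle\ ,\ \rangle$ on the space of $p$-regular class functions. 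Concretely, writing $s_r$ for the simple Brauer character of the head of $P_r$, one has $\langle \chi_{P_s}, s_r\rangle = \delta_{sr}$, and feeding the identity $\chi_V(g)\,\overline{s_i(g)}$-expansion into this pairing converts the multiplicity statement into $\sum_r M_{sr}\,\overline{\chi_r(g_t)} = \chi_V(g_t)\,\overline{\chi_s(g_t)}$ for $1\le s\le N$; for rows $s>N$ the equation $(Mv_t)_s=\chi_V(g_t)(v_t)_s$ reads $\sum_{r\le N} M_{sr}\overline{\chi_r(g_t)} = 0$, which I would handle by the same reciprocity applied to the (non-projective) module $V_s$.

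The main obstacle is the bookkeeping in the last paragraph: making precise the duality between the projective indecomposables and the simple Brauer characters under the form $\langle\ ,\ \rangle$ restricted to $p$-regular classes, and using it correctly to pass between "multiplicity of $P_r$ in $V\otimes P_s$" and inner products of Brauer characters. In characteristic zero this collapses to the familiar statement that $M_{ij}=\langle\chi\chi_j,\chi_i\rangle$ and the eigenvector identity is just the orthogonality of columns of the character table, so it may be cleanest to present the $p=0$ case first as motivation and then indicate the modular modifications. One should also double-check the appearance of complex conjugation and of $g_t^{-1}$ versus $g_t$: the right eigenvector naturally involves $\overline{\chi_i(g_t)}=\chi_i(g_t^{-1})$ because the reciprocity introduces the dual module $V^\vee$, and this is the source of the $g_t^{-1}$ that will appear in the column sums $S_t$ in Theorem \ref{main asym}. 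Once the lemma is established, the asymptotic formula follows by expanding the initial vector of $V^{\otimes n}$-multiplicities in the eigenbasis and isolating the contributions of the maximal-modulus eigenvalues, which are exactly those $\chi_V(g_t)$ with $|\chi_V(g_t)|=\dim V$, i.e. with $g_t\in Z_V(G)$.
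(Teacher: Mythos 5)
Your left-eigenvector argument is exactly the paper's: $\chi_{V\otimes V_j}(g_t)=\chi_V(g_t)\chi_{V_j}(g_t)=\sum_i M_{ij}\chi_{V_i}(g_t)$, i.e.\ multiplicativity of Brauer characters on $p$-regular elements together with the fact that the character of a module is the sum of the characters of its indecomposable summands. The right-eigenvector half, however, has two problems. First, your indexing is transposed: for the lemma as stated you need $M_{ij}$ to be the multiplicity of $V_i$ in $V\otimes V_j$ (so the columns $j\le N$ record the projective modules $V\otimes P_j$), whereas you work with $M_{ij}=[V\otimes V_i:V_j]$. This is not cosmetic, because $v_t$ and $w_t$ are genuinely different vectors in the modular case. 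Under the correct convention the entries of $Mv_t$ in rows $i>N$ vanish for free, since $M_{ij}=0$ whenever $j\le N$ and $i>N$ (tensoring with a projective yields a projective). Under your convention the identity you need for rows $s>N$, namely $\sum_{r\le N}M_{sr}\overline{\chi_r(g_t)}=0$, is false in general: for $G=C_p$ in characteristic $p$ with $V=V_2$ one has $N=1$, $v_1=(1,0,\dots)$, and $[V_2\otimes V_{p-1}:V_p]=1$, so the corresponding entry of $M^Tv_1$ is $1\neq 0$. Multiplicities of projective summands in $V\otimes V_s$ for non-projective $V_s$ are not determined by Brauer characters, so no reciprocity argument can repair this; the fix is simply to use the correct orientation of $M$.

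Second, for the rows $i\le N$ you invoke a reciprocity $[V\otimes P_s:P_r]=[P_s:V^\vee\otimes P_r]$ that is not correctly stated (the right-hand side should involve simples, via $\dim\Hom_{kG}(V\otimes P_s,S_r)=\dim\Hom_{kG}(P_s,V^\vee\otimes S_r)$), and you then defer the decisive computation to ``bookkeeping''. That step is exactly where the content of the lemma lives, and the paper does it concretely: $M_{ij}=[V\otimes P_j:P_i]=\dim\Hom_{kG}(V\otimes P_j,S_i)=\langle\chi_V\chi_{P_j},\chi_{S_i}\rangle$, after which the column orthogonality of Brauer characters, $\sum_{j=1}^N\chi_{P_j}(g_s)\overline{\chi_{S_j}(g_t)}=\delta_{st}\lvert C_G(g_t)\rvert$, yields $(Mv_t)_i=\chi_V(g_t)\overline{\chi_{S_i}(g_t)}$ in one line. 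So you have identified the right ingredients (orthogonality/duality between projective and simple Brauer characters), but the half of the lemma that actually requires them is left unproved, and one of your intermediate claims is false as written.
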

\begin{proof}
If we denote by $\tilde{M}$ the restriction of $M$ to the projective cell, then the entry $\tilde{M}_{ij}$ is the number of times $P_i$ occurs as a summand in $V\otimes P_j$, which is $\dim \Hom_{kG}(V\otimes P_j,S_i)=\langle \chi_V \chi_{P_j},\chi_{S_i}\rangle$. Using column orthogonality of Brauer characters, it is now easy to check that $v_t$ is a right eigenvector corresponding to $\chi_V(g_t)$. To see that $w_t^T$ is a left-eigenvector, notice that the $j$-th entry of $w_t^TM$ is $\sum_{i=1}^\infty \chi_i(g_t)M_{ij}=\chi_{V\otimes V_j}(g_t)=\chi_V(g_t)\chi_{j}(g_t)$. 
\end{proof}}

\begin{proof}[Proof of Theorem \ref{main asym}]
{By \cite[Theorem 5.10]{lacabanne2024asymptotics} and Lemma \ref{correspondence lemma}, $a(n)$ is sum over the column corresponding the trivial $kG$-module of the matrix  $$\frac{1}{\lvert G\rvert}\sum_{\substack{1\le t\le N \\ g_t \in Z_V(G)}} \big(\chi_V(g_t)\big)^n v_tw_t^T, $$
as the $\chi_V(g_t)$'s in the summation (which are equal to $\omega_V(g_t)\dim V$) are the eigenvalues of maximal modulus, and $w_t^Tv_t=\lvert G\rvert$ so we normalize by this factor. Part 1) of Theorem \ref{main asym} now follows by direct calculation, and part 2) follows from \cite[Theorem 5.10]{lacabanne2024asymptotics}.} 
\end{proof}

{\begin{proof}[Proof of Corollary \ref{p group}]
If $Z(G)$ is either trivial or a $p$-group, by Lemma \ref{correspondence lemma} there is only one eigenvalue of maximal modulus, so $a(n)$ is aperiodic. If $G$ is a $p$-group, the trivial module is the only irreducible $kG$-module, so the result follows.     
\end{proof}}

\begin{Remark}
If the action matrix is finite, then by \cite[Theorem 7]{lacabanne2023asymptotics} we have that $\lambda^{\mathrm{sec}}=0$ implies $b(n)=a(n)$. The presence of constant $d$ in the part 2) of Theorem \ref{main asym} suggests that this is false in general. For a counterexample in the infinite case, see \cite[Example 5.16]{lacabanne2024asymptotics}. See also Remark \ref{infinite sec}.
\end{Remark}

{\begin{Remark}
Lemma \ref{correspondence lemma} tells us that when $p\nmid \lvert G\rvert$ and $\Gamma^P=\Gamma$, the second largest eigenvalue coincides with the second largest character value, so in this case information about the rate of convergence and variance can also be obtained from the character table. However, in general this is not possible, see Example \ref{irr counterexample}.   
\end{Remark}}

\begin{Remark}
We do not lose any generality by requiring $V$ to be faithful, as we can always pass to a smaller quotient group which acts faithfully on $V$. In characteristic zero, the character table of $G$ `contains' the character table of its quotient groups: if $N$ is a normal subgroup of $G$, the irreducible characters of $G/N$ are in bijection with the irreducible characters $\chi$ of $G$ with $\chi(n)=\chi(1)$ for all $ n\in N$. As an example, the dihedral group of order 12,  $D_{12}\cong C_2\times S_3$, has a non-faithful two-dimensional irreducible character $\rho$ which becomes faithful once passed to the quotient group $S_3$, and the character table of $S_3$ may be obtained from that of $D_{12}$ by taking  only the rows $\chi$ with $\chi(g)=\chi(1)$ for all $g \in \ker \rho$, and collapsing the redundant columns.  
\end{Remark}

{If $k=\C$ (or more generally $p \nmid \lvert G \rvert$), the projective cell is the whole fusion graph, so Lemma \ref{correspondence lemma} in fact gives an orthogonal diagonalization of the action matrix $M$. Note that in this case $w_t^T=\overline{v_t}^T$.}  

{\begin{proof}[Proof of Theorem \ref{exact formula}]
The matrix $P_t:={\lvert C_t\rvert }/{\lvert G \rvert} \cdot \overline{v_t}v_t^T$ is the projection onto the span of $v_t$. From eigendecomposition, we have $$M^n=\sum_{t=1}^N \chi(g_t)^nP_t = \frac{1}{\lvert G \rvert}\sum_{t=1}^N \chi(g_t)^n \lvert C_t\rvert \overline{v_t}v_t^T.$$ 
Recalling (from e.g. \cite[Theorem 4]{lacabanne2023asymptotics}) that $b(n)$ is the sum over the column of $M^n$ corresponding to the trivial module, it follows that $$b(n)=\frac{1}{\vert G\vert} \sum_{t=1}^{{N}} \vert C_t\vert \overline{S_t} (\chi(g_t))^n,$$
where $\overline{S_t}$ is just the sum of the entries in the first column of $\overline{v_t}v_t^T$. Finally, $S_t=\overline{S_t}$ since the sum over a column of the character table is an integer. This last fact is well-known and also follows from the proof of Theorem \ref{independence} below, which shows that a column sum is fixed by the Galois group of a Galois extension over $\Q$ (a column sum is also an algebraic integer, so it is an integer). 
The statemens about rate of convergence and variance follow from Theorem \ref{main asym} since by Lemma \ref{correspondence lemma} in this case the eigenvalues of $M$ are the character values.   
\end{proof}
\begin{Remark} \label{alt proof}
We thank the referee for the insightful observation that Theorem \ref{exact formula} can also be proven using only character theory: we have $$b(n)=\sum_{t=1}^N \langle \chi_t, \chi^n\rangle = \frac{1}{\lvert G\rvert} \sum_{1\le t ,s \le N} \lvert C_s\rvert \overline{ \chi_t(g_s)} \big(\chi(g_s)\big)^n = \frac{1}{\lvert G\rvert} \sum_{t=1}^N \lvert C_t\rvert \overline{S_t}\big(\chi(g_s)\big)^n. $$ This is basically the same calculation.    
\end{Remark}
\begin{Remark} \label{remark caution}
Recall that a function $f: \Z_{\ge 0} \to \R$ converges geometrically to $a \in \R$ with ratio $\beta \in [0,1)$ if for all $\gamma \in (\beta, 1),\{(f(n)-a)/\gamma^n\}_{n \ge 0}$ is bounded, and we call the infimum of all ratios $\beta$ the \textbf{ratio of convergence}.
The statement that $\lvert b(n)/a(n)-1\rvert \in \mathcal{O}\big((\lvert {\chi_{\mathrm{sec}}}\rvert / {\dim V})^n\big)$ can be rephrased as saying that the ratio of convergence of $b(n)/a(n)\to 1$ is bounded above by $\lvert {\chi_{\mathrm{sec}}}\rvert / {\dim V}$. The two values are equal precisely when in the (simplified) expression of $b(n)$ given in Theorem \ref{exact formula}, the coefficient in front of at least one $(\chi_{\text{sec}})^n$ is nonzero. (Two things could go wrong: either all second largest character values correspond to columns whose entries sum to 0, or there could be cancellations.) For an example where the two values are distinct, take $V$ to be a faithful irreducible representation of $D_{16}$ (the dihedral group of order 16, \textit{c.f.} Theorem \ref{dihedral}) then we have $\chi_{\mathrm{sec}}=\sqrt{2}$, but $b(n)=a(n)$, \textit{i.e.} the ratio of convergence is 0.     
\end{Remark}}

\subsection{Independence of asymptotic formula}
We now prove Theorem \ref{independence}, which
is not immediately obvious if $Z_V(G)=Z_W(G)$ is nontrivial: since $V$ and $W$ are both faithful, it is clear that the sets of roots of unity $\{\omega_V(g): g \in Z_V({G})\}$ and $\{\omega_W(g):g \in Z_W({G})\}$ coincide, but \textit{a priori} the same root could be attached to different column sums in the two asymptotic formulas.

\begin{proof}[Proof of Theorem \ref{independence}]
To prepare for the proof, recall (\textit{e.g.} from \cite{MR2500560}) that there is an action of $\text{Gal}(\Q[\zeta_m]:\Q)$ on characters of $\C G$-modules, where $m$ is the exponent of $G$ and $\zeta_m$ is a primitive $m$-th root of unity. For $\sigma \in \text{Gal}(\Q[\zeta_m]:\Q)$, $\sigma$ acts on the character $\chi_V$ corresponding to $V$ by $\sigma \cdot \chi_V=\chi_{\sigma V}$, where $\sigma V$ is obtained by first realising the action of $V$ as matrices and then applying $\sigma$ entrywise. Call this the `row' action of ${\text{Gal}(\Q[\zeta_m]:\Q)}$, as it permutes the rows of the character table. There is also a `column' action of $\text{Gal}(\Q[\zeta_m]:\Q)$ on conjugacy classes of $G$: if $\sigma=\sigma_l$ corresponds to the automorphism taking $\zeta_m$ to $\zeta_m^l$, then $\sigma_l$ acts by $\sigma_l\cdot g = g^l$. Moreover, the two actions are compatible in the sense that 
\begin{equation} \label{galois commute}
(\sigma_l \cdot \chi_V) (g)=\chi_V(g^l)=\chi_V(\sigma_l \cdot g).  
\end{equation}
For a conjugacy class corresponding to $g$, the column sum $S_g$ is fixed by the row action of $\text{Gal}(\Q[\zeta_m]:\Q)$, and so by (\ref{galois commute}), we have $S_g=S_h$ if the conjugacy classes of $g$ and $h$ differ by a column action of $\text{Gal}(\Q[\zeta_m]:\Q)$. 

{Now we can write $Z_V(G)=Z_W(G)$ as $\{1, g, g^2,\dots, g^{d-1} \}$ where $d$ is the order of $g$. We can do this because $V$ being faithful means $Z_V(G)$ embeds via the map $g\mapsto \omega_V(g)$ into $\C$ as a multiplicative subgroup, and so is cyclic. Since $V, W$ are faithful, $\omega_V(g)$ and $\omega_W(g)$ are primitive $d$-th roots of unity. By discussions above, to show $c_V(n)=c_W(n)$ it suffices to show that if $\omega_V(g^x)=\omega_W(g^y)$ for some $1\le x,y < d $, then $g^x$ and $g^y$ differ by some column action. However, if $\omega_V(g^x)=\omega_W(g^y)$, then $\omega_V(g^x)$ and $\omega_V(g^y)$} must have the same order, so we can find some field automorphism $\sigma_l \in \text{Gal}(\Q[\zeta_m]:\Q)$ taking one root of unity to the other. Thus, $\sigma_l$ is the desired column action and we are done.  
\end{proof}

\begin{proof}[Proof of Theorem \ref{irr faithful}]
As discussed in the introduction, if $G$ has a faithful irreducible $\C G$-module $V$ then $Z(G)\cong C_d$ for some $d\ge 1$, and $Z_V(G)=Z(G)$. It follows that we can write $a(n)$ as in \eqref{center roots}, where $\omega$ is \textit{any} primitive $d$-th root of unity, since the (column) Galois action transitively permutes the elements in $Z(G)$ which have the same order (this is true because $Z(G)$ is cyclic). {Moreover, if $g^i$ and $g^j$ have the same order, then it follows from the proof of Theorem \ref{independence} above that $S_{g^i}=S_{g^j}$.}     
\end{proof}

\begin{Remark}
A group $G$ has a faithful irreducible $\C G$-module if and only if the direct product of all minimal normal subgroups of $G$ is generated by a single class of conjugates in $G$. This is due to \cite{gaschutz1954endliche}.    
\end{Remark}

\section{Examples in characteristic zero}

To illustrate our results we now apply them to study various examples.  

\subsection{Dihedral groups}
For $m\ge 3$ let $G=D_{2m}$ be the dihedral group of order $2m$. The asymptotic formulas for a faithful irreducible $\C D_{2m}$-module were obtained in \cite[Example 2]{lacabanne2023asymptotics} as
\begin{equation}\label{eqn:asym dihedral}
a(n)=\begin{cases}
 \big(\frac{m+1}{2m}\big) \cdot 2^n & m \text{ odd},\\
 \big(\frac{m+2}{2m}\big) \cdot 2^n & m \text{ even, $m/2$ odd},\\
 \big(\frac{m+2}{2m}+(-1)^n\frac{1}{m}\big) \cdot 2^n & m \text{ and $m/2$ both even}.
\end{cases}    
\end{equation}
By Theorem \ref{independence}, this formula remains true (after replacing $2^n$ with $(\dim V)^n$) for any faithful $\C G$-module with $Z_V(G)=Z(G)$, not necessarily irreducible. We now extend \cite[Example 2]{lacabanne2023asymptotics} by using Theorem \ref{exact formula} to compute also the exact formula, and characterize {$\chi_{\mathrm{sec}}$ and the ratio of convergence}.
\begin{Theorem}[Dihedral groups] \label{dihedral}
Let $V$ be a faithful irreducible  $\C D_{2m}$-module, then the exact growth rate is \begin{equation}\label{eqn: dihedral exact}
b(n) = \begin{cases}
\frac{2^n}{2m} \cdot \left(m+1 + 2\sum_{k=1}^{(m-1)/2} \cos^n(\frac{2\pi k}{m})\right) & \text{\ m odd},\\
\frac{2^n}{2m} \cdot \left( m+2 + 4\sum_{k=1}^{({m-2})/{4}} \cos^n(\frac{4\pi k}{m})\right) & \text{\ m even and \ } m/2 \text{\ odd}, \\
\frac{2^n}{2m} \cdot \left( m+2 + 2\cdot (-1)^n + 4\sum_{k=1}^{{\floor{({m-2})/{4}}}} \cos^n(\frac{4\pi k}{m})\right) & \text{\ m even and \ } m/2 \text{\ even},
\end{cases}    
\end{equation}
{where when $m=4$ the summation of cosines is taken to be 0.}
Moreover, a second largest character value is $\chi_{\mathrm{sec}} =2\cos\big({2m'\pi}/{m}\big)$ where $m'=(m+1)/2$ if $m$ is odd, and $m'=m/2+1$ if $m$ is even. {The ratio of convergence of $b(n)/a(n) \to 1$ is $\cos\big({(m+1)\pi}/{m}\big)$ if $m$ is odd, and $\cos\big((4\floor{(m-2)/4 }\pi/m\big)$ if $m>4$ is even. When $m=4$, the ratio of convergence is 0.}
\end{Theorem}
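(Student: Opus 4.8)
The plan is to compute $b(n)$ directly from Theorem \ref{exact formula} by enumerating the conjugacy classes of $D_{2m}$ together with their character-table column sums, and then to read off $\chi_{\mathrm{sec}}$ and the ratio of convergence from the resulting expression. First I would fix a faithful irreducible $\C D_{2m}$-module $V$; such a module is two-dimensional with character $\chi$ given by $\chi(r^j) = 2\cos(2\pi j a/m)$ on the rotation $r^j$ (for a suitable $a$ coprime to $m$ with $r^a$ generating the relevant cyclic quotient — faithfulness forces $\gcd(a,m)=1$) and $\chi(s r^j) = 0$ on every reflection. Theorem \ref{independence} (or rather Theorem \ref{irr faithful}) lets me replace $a$ by $1$ without loss of generality, since reindexing the rotations by a unit mod $m$ is exactly a column Galois action and preserves column sums; so I take $\chi(r^j) = 2\cos(2\pi j/m)$.

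Next I would split into the three cases according to the parity structure of $m$, because the conjugacy classes of $D_{2m}$ differ: when $m$ is odd there are $(m-1)/2$ rotation classes $\{r^j, r^{-j}\}$ (each of size $2$), one identity class, and a single reflection class of size $m$; when $m$ is even there are $(m-2)/2$ paired rotation classes of size $2$, the identity, the central class $\{r^{m/2}\}$ of size $1$, and two reflection classes each of size $m/2$. For each class I need the column sum $S_t$ of the ordinary character table. The reflection columns: summing over the $1$-dimensional characters and the two-dimensional ones, the reflection column sums come out to $2$ (odd $m$: the two linear characters contribute $1+1$, the two-dimensional ones contribute $0$) or to $0$ in one case and something nonzero in the other when $m$ is even — I would record these carefully. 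The rotation column at $r^j$ has sum $1 + (-1)^j + 2\sum_{a}\cos(2\pi j a/m)$ over the faithful-type two-dimensionals (the sum over a reduced residue system), which collapses by a standard Ramanujan-sum/Möbius computation; for $r^{m/2}$ it is the central column with sum equal to $\pm$(number of irreducibles on which $r^{m/2}$ acts by $\pm 1$). Plugging $|C_t|$, $S_t$, and $\chi(g_t)^n$ into the formula of Theorem \ref{exact formula} and simplifying the rotation contribution — pairing $r^j$ with $r^{-j}$ so that $\chi(r^j)^n + \chi(r^{-j})^n = 2\cos^n(2\pi j/m)$ and noting that half of the rotation classes drop out because their column sums vanish — should yield exactly \eqref{eqn: dihedral exact}, with the indexing of the cosine sums ($k$ up to $(m-1)/2$, or $(m-2)/4$, or $\floor{(m-2)/4}$) emerging from which residues $j$ survive. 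The $m=4$ degeneration is a direct check.

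For the last assertions: $\chi_{\mathrm{sec}}$ is the second-largest value of $\{|2\cos(2\pi j/m)| : 0 \le j\}$, which is achieved at the $j$ closest to $m/2$ without equaling it, i.e.\ $j = (m\pm1)/2$ for $m$ odd (giving $2\cos((m+1)\pi/m) = -2\cos(\pi/m)$) and $j = m/2 \pm 1$ for $m$ even; this gives the stated $m' = (m+1)/2$ or $m/2+1$. By Remark \ref{remark caution}, the ratio of convergence of $b(n)/a(n)\to1$ equals $|\chi_{\mathrm{sec}}|/\dim V = |\chi_{\mathrm{sec}}|/2$ \emph{provided} the coefficient of some $(\chi_{\mathrm{sec}})^n$ in the simplified $b(n)$ is nonzero — so the real content is checking that the surviving cosine term of largest modulus actually appears. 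For odd $m$ the term $\cos^n((m+1)\pi/m) = \cos^n((m-1)\pi/m)$ is present with nonzero coefficient, giving ratio $\cos((m+1)\pi/m)$. For even $m>4$ the largest surviving term in the sum $\sum_{k=1}^{\floor{(m-2)/4}}\cos^n(4\pi k/m)$ is at $k = \floor{(m-2)/4}$, giving ratio $\cos(4\floor{(m-2)/4}\pi/m)$; and when $m=4$ the cosine sum is empty and the only subleading term is $(-1)^n$, but $(-1)^n$ is already absorbed into the periodic part $a(n)$, so $b(n)=a(n)$ and the ratio is $0$.

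The main obstacle I anticipate is the bookkeeping in the even case: correctly determining which of the two reflection classes has vanishing column sum and which does not, correctly handling the central class $r^{m/2}$, and — most delicately — verifying the nonvanishing of the coefficient of $(\chi_{\mathrm{sec}})^n$ in each subcase, since a priori cancellations could occur (exactly the caveat flagged in Remark \ref{remark caution}), which is precisely why the $m=4$ case behaves differently and must be isolated. Everything else is a finite character-table computation plus the Ramanujan-sum simplification of the rotation column sums.
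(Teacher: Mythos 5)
Your proposal is correct and takes essentially the same route as the paper, which likewise just applies Theorem \ref{exact formula} to the character table of $D_{2m}$ (the nonzero character values being cosines) and reads off $\chi_{\mathrm{sec}}$ and the ratio of convergence via Remark \ref{remark caution}, i.e.\ by locating the second largest character value with nonvanishing column-sum coefficient. One harmless slip: for $m$ odd the reflection column sum is $1+(-1)+0=0$ (the sign character takes the value $-1$ on reflections), not $2$, but since $\chi$ vanishes on reflections this column contributes nothing to $b(n)$ for $n\ge 1$ and the final formula is unaffected.
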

\begin{proof}
The proof is by straightforward application of Theorem \ref{exact formula}: let $\chi$ be the character of $V$, then the nonzero $\chi(g)$'s are all cosines, as given in \textit{e.g.} \cite[Section 5.3]{serre}. The statement about $\chi_{\mathrm{sec}}$ can likewise be obtained from the character table. {The ratio of convergence here corresponds to the ratio $\mu/\dim V$ where $\mu$ is any second largest character value of $V$ \textit{with nonzero column sum}, \textit{c.f.} Remark \ref{remark caution}. Note that $\mu$ sometimes coincide with $\chi_{\mathrm{\sec}}$ but not always. }   
\end{proof}

If we ignore the summations of cosines (which are column sums for columns outside the center) in \eqref{eqn: dihedral exact}, we recover the formulas for $a(n)$ given in \eqref{eqn:asym dihedral}. We note also that in this case $b(n)$ and $\chi_{\mathrm{sec}}$ are independent of the particular faithful irreducible $\C G$-module. In general, we can only expect $c_V(n)$ to be independent.

\begin{figure}[h]
    \centering
    \begin{minipage}{0.40\textwidth}
        \centering
\includegraphics[width=0.9\textwidth]{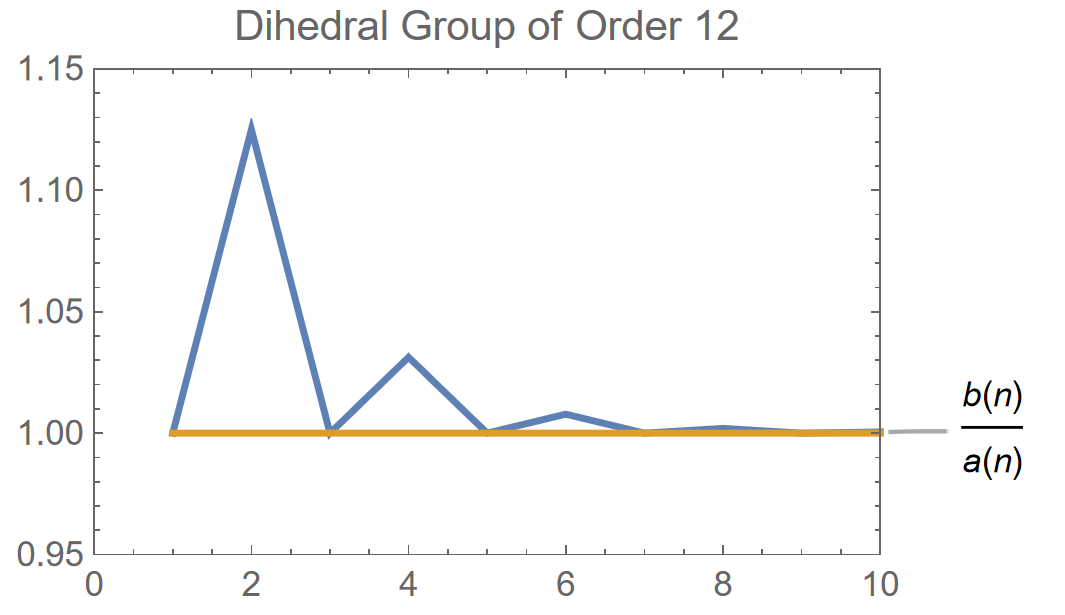}
    \end{minipage}
    \begin{minipage}{0.4\textwidth}
        \centering
    \includegraphics[ width=0.9\textwidth]{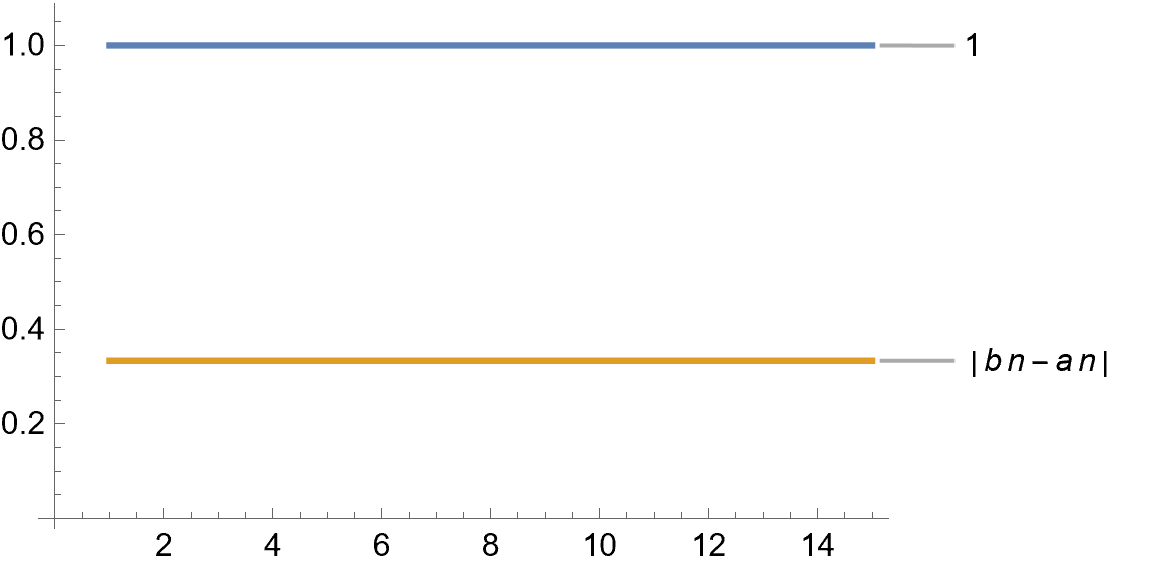}   
    \end{minipage}
     \caption{\textbf{Left}: the ratio $b(n)/a(n)$ for faithful irreducible $\C D_{12}$-modules. \textbf{Right}: the variance $\lvert b(n)-a(n)\rvert$.}
     \label{fig: dihedral}
\end{figure}

We illustrate the geometric convergence $b(n)/a(n) \to 1$ and variance for the $m=6$ case in Figure \ref{fig: dihedral}. In both graphs the $x$-axis represents $n$. In the graph on the left, the blue line represents the ratio $b(n)/a(n)$, which converges to the yellow line $y=1$ as we expect. In the graph on the right, the orange line represents the variance, while the blue line is $y=(\chi_{\mathrm{sec}})^n=1$. Since in this case $\chi_{\mathrm{sec}}=1$, as we expect from Theorem \ref{asym theorem} the variance is in $\mathcal{O}(1)$. In fact, $\lvert b(n)-a(n)\rvert = 1/3$.

\subsection{Symmetric groups}
The asymptotic formula for the symmetric groups $G= S_m$, $m\ge 3$ was calculated in \cite[Example 2.3]{coulembier2023asymptotic} to be \begin{equation}\label{eqn: sym asym}
a(n)= \sum_{z=0}^{\floor * {m/2}} \frac{1}{(m-2z)!z!2^z}\cdot (\dim V)^n.    
\end{equation}

Note that since $S_m$ has trivial center, the formula is true for any faithful $\C S_m$-module. We now extend the result to give an exact formula. By $a=(a_1,\dots,a_m)$, we mean the cycle type with $a_l$ different $l$-cycles, for $1\le l\le m$.  

\begin{Theorem}[Symmetric groups]
Let $V$ be a $\C S_m$-module with character $\chi$. The exact growth rate of $V$ is 
$$b(n) = \sum_{a} \frac{1}{\prod_{l={1}}^m l^{a_l} a_l!}r_2(a) \left(\chi(a)^n\right), \text{with\ } r_2(a)=\prod_{l\ge 1, a_l \neq 0} a_l! \varepsilon(l,a_l), $$ where the sum in $b(n)$ is over all cycle types $a=(a_1,\hdots,a_m),$ and 
$$
    \varepsilon(l,a_l) =    
    \begin{dcases*}
         \sum_{z=0}^{\floor*{a_l/{2}}}\frac{l^{z}}{(a_l-2z)! z!2^z} & $l$ odd, \vspace{0.3em}\\
    0 & $l$ even and $a_l$ odd, \\  
     \cfrac{l^{{a_l}/{2}}}{2^{{a_l}/{2}}\left({a_l}/{2}\right)!} & $l$ even and $a_l$ even.
    \end{dcases*}
$$    
\end{Theorem}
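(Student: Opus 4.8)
The plan is to apply Theorem~\ref{exact formula} directly, with the bulk of the work being an explicit evaluation of the column sums $S_a$ of the character table of $S_m$ together with a careful bookkeeping of conjugacy class sizes. Recall that the conjugacy classes of $S_m$ are indexed by cycle types $a = (a_1,\dots,a_m)$, the class of type $a$ has size $\lvert C_a\rvert = m!/\prod_l l^{a_l}a_l!$, and Theorem~\ref{exact formula} gives $b(n) = \frac{1}{m!}\sum_a \lvert C_a\rvert S_a \chi(a)^n$. Substituting the class size formula, the prefactor $\lvert C_a\rvert/m! = 1/\prod_l l^{a_l}a_l!$ already matches the claimed expression, so the entire content of the theorem reduces to proving the identity
\begin{equation*}
S_a = r_2(a) = \prod_{l\ge 1,\, a_l\ne 0} a_l!\,\varepsilon(l,a_l).
\end{equation*}

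First I would recall the classical interpretation of the column sum $S_a = \sum_t \chi_t(a)$ for symmetric groups: since all characters of $S_m$ are rational (indeed integral), $S_a$ equals the number of elements $x$ of the conjugacy class of $a$ with $x = x^{-1}$... more precisely, the column sum of the character table at a class $[g]$ equals the number of square roots of $g$, i.e. $\#\{x \in S_m : x^2 = g\}$. This is the standard Frobenius--Schur type identity $\sum_t \chi_t(g) = \#\{x : x^2 = g\}$, valid because every $\chi_t$ has Frobenius--Schur indicator $+1$ for $S_m$ (all representations are realizable over $\Q$). So the task becomes: count permutations $x$ whose square has cycle type $a$. Squaring a $2k$-cycle yields two $k$-cycles, while squaring a $(2k+1)$-cycle yields one $(2k+1)$-cycle; more generally an odd $l$-cycle can arise either as the square of an $l$-cycle or by ``pairing'' — it cannot — and an $l$-cycle with $l$ odd in $x$ squares to an $l$-cycle. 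The combinatorics then decouples over the distinct cycle lengths $l$ appearing in $a$, which explains the product structure $\prod_l a_l!\,\varepsilon(l,a_l)$; the factor $a_l!$ accounts for choosing which of the $a_l$ abstract $l$-cycles of $g$ are matched up.

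The heart of the computation is the local factor $\varepsilon(l,a_l)$, which counts the number of ways the $a_l$ cycles of length $l$ in $g$ can be ``assembled'' from cycles of $x$, divided appropriately. For $l$ odd: each $l$-cycle of $g$ is the square of a unique $l$-cycle, OR two $l$-cycles of $g$ come as the square of a single $2l$-cycle — wait, squaring a $2l$-cycle with $l$ odd gives two $l$-cycles, consistent. So if among the $a_l$ cycles of length $l$ we pair up $z$ of them into $z$ groups of two (each group coming from a $2l$-cycle in $x$, and there are $l$ choices of $2l$-cycle squaring to a given ordered... here $l$ counts the number of $2l$-cycles whose square is a fixed pair of $l$-cycles), the count is $\sum_z \frac{a_l!}{(a_l-2z)!\,z!\,2^z}\cdot l^z$, and dividing by $a_l!$ gives the stated $\varepsilon(l,a_l)$ for $l$ odd. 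For $l$ even, an $l$-cycle of $g$ can never be a square of a single cycle (squaring a $c$-cycle gives cycles of length $c$ if $c$ odd, or $c/2$ if $c$ even — length $l$ even would need $c = 2l$ giving two $l$-cycles, or... it cannot appear singly), so the $l$-cycles must come entirely in pairs from $2l$-cycles; hence $\varepsilon = 0$ if $a_l$ is odd and $\varepsilon(l,a_l) = \frac{l^{a_l/2}}{2^{a_l/2}(a_l/2)!}$ if $a_l$ is even. I would then verify the edge case: when $\chi$ is the trivial (or any) character restricted so that $\chi(a) = \dim V$ only at $a = (m,0,\dots,0)$... actually cross-checking against $a(n)$ in~\eqref{eqn: sym asym}, the asymptotic formula keeps only the identity class $a = (m,0,\dots)$ with all parts of length $1$, and indeed $\varepsilon(1,m) = \sum_z \frac{1}{(m-2z)!z!2^z}$, $r_2 = m!\cdot\varepsilon(1,m)/m! $ — matching.

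The main obstacle I expect is getting the local counting factors exactly right — in particular the factor $l^z$ for odd $l$ (the number of distinct $2l$-cycles of $x$ squaring to a prescribed unordered pair of $l$-cycles of $g$) and the analogous factor $l^{a_l/2}$ for even $l$, together with correctly tracking whether pairs of $l$-cycles are ordered or unordered (the $2^z$ and $(a_l/2)!$ denominators). A clean way to organize this is to fix the cycle structure of $g$ and describe $x$ as a permutation of the underlying set that induces a matching on the cycles of $g$: cycles of odd length $l$ either map to themselves (giving $l$ choices of square root) or are swapped in pairs (each unordered pair arising from $l$ different $2l$-cycles), and cycles of even length must be swapped in pairs. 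Summing over the number $z$ of swapped pairs and simplifying the resulting multinomial coefficients yields $\varepsilon(l,a_l)$; multiplying over $l$ and invoking Theorem~\ref{exact formula} completes the proof. One should also note explicitly that the Frobenius--Schur argument requires all irreducible $\C S_m$-modules to have indicator $+1$, which holds since they are defined over $\Q$ — a standard fact I would cite rather than reprove.
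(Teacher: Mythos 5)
Your proposal is correct and follows essentially the same route as the paper: apply Theorem~\ref{exact formula}, use the fact that all irreducible $\C S_m$-modules have Frobenius--Schur indicator $1$ to identify the column sum $S_a$ with the number of square roots of an element of cycle type $a$, and then substitute the class size $\lvert C_a\rvert = m!/\prod_l l^{a_l}a_l!$. The only difference is that you derive the square-root count $r_2(a)$ by a direct (and correct) combinatorial argument, whereas the paper simply cites the known formula from the literature.
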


\begin{proof}
To apply Theorem \ref{exact formula} we need to sum over all the columns of the character table. Recall that all $\C S_m$-modules have Frobenius--Schur indicator equal to 1: by \textit{e.g.} \cite[Theorem 1]{Wigner}, in this case the column sum $S_t$ corresponds to the number of elements in $G$ squaring to $g_t$ (it is easy to see that this does not depend on the representative we choose). An explicit count for the number of $k$-th roots of a permutation is given in \cite[Theorem 1]{roots}. Specializing to the $k=2$ case, we obtain that the number of square roots of any $\alpha$ in the cycle class $a$, denoted $r_2(a)$, is given as in the statement of the Theorem. The exact formula now follows from Theorem \ref{exact formula}.     
\end{proof}

 We recover \eqref{eqn: sym asym} if in the summation for $b(n)$ we restrict to the term corresponding to the identity, with $a=(m,0
,\dots,0).$

The ratios ${b(n)}/{a(n)}$ for all faithful irreducible $\C S_5$-modules are plotted in Figure \ref{fig: sym figure} beside the character table of $S_5$. As we expect from Theorem \ref{asym theorem}, we see that the $\C S_5$-modules of dimension 6 and 4 converge slower ({the ratios of convergence are 1/3 and 1/4 respectively)} than that of dimension 5 ({the ratio of convergence is 1/5}). By Theorem \ref{asym theorem} we also have $\lvert b(n)-a(n)\rvert \in \mathcal{O}\big((\chi_{\mathrm{sec}})^n\big)$, so for the 5-dimensional irreducible $\C S_5$-module we have  $\lvert b(n)-a(n)\rvert \in \mathcal{O}(1)$ and is a constant. For the six-dimensional irreducible $\C S_5$-module we have $\lvert b(n)-a(n)\rvert \in \mathcal{O}(2^n)$, which is illustrated in Figure \ref{fig:s5 var}.  

\begin{figure}[H]
    \centering
    \begin{minipage}{0.40\textwidth}
        \centering
\includegraphics[width=0.9\textwidth]{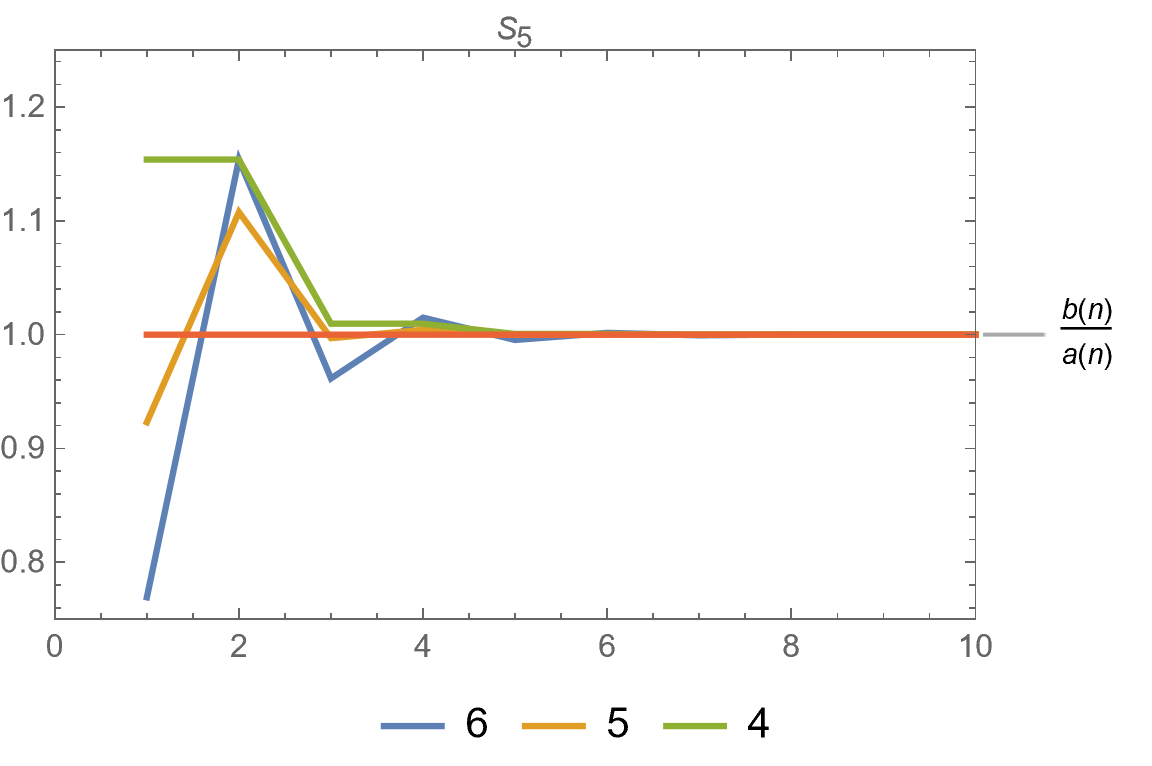}
    \end{minipage}
    \begin{minipage}{0.4\textwidth}
        \centering
    \includegraphics[width=0.6\textwidth]{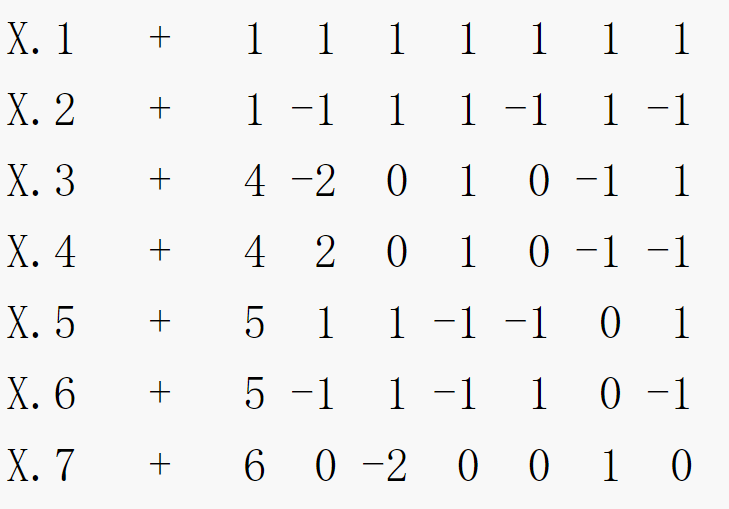}   
    \end{minipage}
     \caption{\textbf{Left}: the ratio $b(n)/a(n)$ for faithful irreducible $\C S_5$-modules, labelled with dimensions. \textbf{Right}: the character table for $S_5$ obtained from Magma.}
     \label{fig: sym figure}
\end{figure}

\begin{figure}[H]
    \centering
\includegraphics[width=0.5\linewidth]{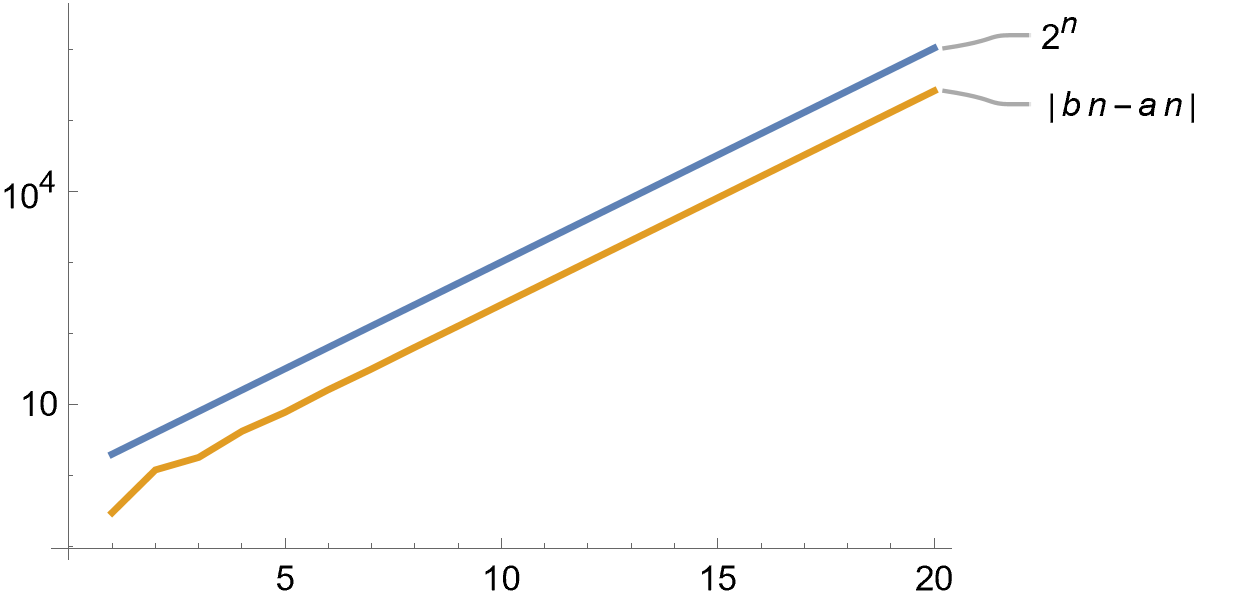}
    \caption{Variance for the six-dimensional irreducible $\C S_5$-module. The $y$-axis uses a logarithmic scale.}
    \label{fig:s5 var}
\end{figure}

\subsection{Special linear groups $\textnormal{SL}(2,q)$}
We now compute the exact and asymptotic formulas for the special linear groups $G=\textnormal{SL}(2,q)$, where $q=p^m$ is some prime power. The second {largest} character values can also be obtained explicitly but varies depending on the particular family of irreducible $\C G$-module, so we do not write them down here.
Following the notation of \cite[Section 38]{dornhoff1971group}, let $a=\begin{psmallmatrix}
 \nu & 0 \\
 0 & \nu^{-1} 
\end{psmallmatrix}$ where $\langle \nu \rangle = \mathbb{F}_q^\times$, {let $z=\begin{psmallmatrix}
 -1 & 0\\
 0 & -1
\end{psmallmatrix}$, $c=\begin{psmallmatrix}
 1 & 0\\
 1& 1
\end{psmallmatrix}$, $d=\begin{psmallmatrix}
  1 & 0\\
  \nu & 1
\end{psmallmatrix}$,} and let $b$ be an element of order $q+1$. We begin with the odd case (\textit{i.e.} $p \neq 2$):

\begin{Theorem}[$\textnormal{SL}(2,q)$, odd case]
Let $V$ be a $\C G$-module, where $G=\textup{SL}(2,q)$ for some odd prime power $q$, then the exact growth rate is
{\begin{align*}
    b(n) =\frac{1}{q^3-q}\left((q^2+q) (\dim V)^n + \alpha  (\chi(z))^n\right) 
    + \frac{\alpha}{2q}((\chi(zc))^n+(\chi(zd))^n) + \\
    2\Big(\frac{1}{q-1}\sum_{l=1}^{\floor{(q-3)/4}}\big(\chi(a^{2l})^n\big)+\frac{1}{q+1}\sum_{m=1}^{\floor{(q-1)/4}}\big(\chi(b^{2m})\big)^n\Big),
\end{align*}
where $\alpha$ is 2 if $(q-1)/{2}$ is even, and 0 otherwise, and the summations are taken to be 0 if the upper index is smaller than 1. }

If $V$ is faithful and irreducible, then the asymptotic growth rate is
$$a(n)=\begin{cases}
\frac{1}{q^3-q}(q^2+q+2(-1)^n)\cdot (\dim V)^n & (q-1)/2 \ \text{even},\\
\frac{1}{q^3-q}(q^2+q)\cdot (\dim V)^n & (q-1)/2 \ \text{odd}.
\end{cases}$$    
\end{Theorem}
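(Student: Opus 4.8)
The plan is to apply Theorem \ref{exact formula} directly, using the explicit character table of $\mathrm{SL}(2,q)$ for odd $q$ from \cite[Section 38]{dornhoff1971group}. Recall that the conjugacy classes of $G=\mathrm{SL}(2,q)$ (for $q$ odd) are: the identity $1$; the central element $z$; the two unipotent classes of $c$ and $d$; the two classes of $zc$ and $zd$; the $(q-3)/2$ classes of $a^\ell$ for $1\le \ell \le (q-3)/2$ (with $a^\ell$ conjugate to $a^{-\ell}$, giving classes of size $q(q+1)$); and the $(q-1)/2$ classes of $b^m$ for $1\le m\le (q-1)/2$ (with $b^m$ conjugate to $b^{-m}$, giving classes of size $q(q-1)$). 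So $b(n)=\frac{1}{q^3-q}\sum_t |C_t| S_t \chi(g_t)^n$, and the bulk of the work is computing the column sums $S_t$ for each class.

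First I would compute $S_1$ and $S_z$. Since $z$ is central, $\chi(z)=\pm\chi(1)$ for each irreducible $\chi$, and the column sum $S_z=\sum_t \chi_t(1)\varepsilon_t$ where $\varepsilon_t\in\{\pm1\}$ records whether $z$ acts trivially. The Frobenius--Schur-indicator count (number of square roots of $g_t$ in $G$) gives the cleanest route: $S_1$ counts square roots of $1$, i.e. involutions plus $1$; $S_z$ counts square roots of $z$. These elements all lie in the torus of order $q-1$ (if $(q-1)/2$ is even) or $q+1$ (if $(q+1)/2$ is even), and one gets $S_z=\alpha\cdot\frac{q^3-q}{?}$ — more precisely the contribution $\frac{|C_z|S_z}{q^3-q}=\frac{\alpha}{?}$; I would pin down the exact arithmetic so that the $z$-term reads $\frac{\alpha}{q^3-q}(\chi(z))^n$ after including $|C_z|=1$. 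For the unipotent classes $c,d,zc,zd$, I would again count square roots: an element squaring to a nontrivial unipotent must itself be unipotent or $\pm$unipotent, and a careful count in the Sylow $p$-subgroup gives the coefficient $\frac{\alpha}{2q}$ for $zc,zd$ and $0$ for $c,d$ (the latter explaining their absence from the formula — their column sums vanish when... here I should double-check which of $c,d$ vs $zc,zd$ survive, matching the stated formula). For the torus classes $a^{2\ell}$ and $b^{2m}$: only even powers appear because $a^{2\ell+1}$ (resp. $b^{2m+1}$) has column sum $0$ — this is exactly the statement that odd powers have no square roots, or equivalently the Galois/FS-indicator argument from the proof of Theorem \ref{exact formula}. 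For even powers, the element has exactly two square roots in its cyclic torus, giving column sum $2$; multiplying by $|C_{a^{2\ell}}|=q(q+1)$ resp. $|C_{b^{2m}}|=q(q-1)$ and dividing by $q^3-q=q(q-1)(q+1)$ produces the coefficients $\frac{1}{q-1}$ and $\frac{1}{q+1}$, and the ranges $1\le\ell\le\floor{(q-3)/4}$, $1\le m\le\floor{(q-1)/4}$ come from restricting $2\ell$ (resp. $2m$) to representatives up to the $g\sim g^{-1}$ identification.

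Once $b(n)$ is established, the asymptotic formula follows from Theorem \ref{main asym} (or directly from $a(n)=c_V(n)(\dim V)^n$ with $c_V(n)$ read off the center): for faithful irreducible $V$ we have $Z(G)=Z_V(G)=\{1,z\}\cong C_2$, so by Theorem \ref{irr faithful}, $a(n)=\frac{1}{q^3-q}\big(S_1+(-1)^n S_z\big)(\dim V)^n$ — where $\omega_V(z)=-1$ since $V$ is faithful. With $S_1=q^2+q$ (the column sum at the identity, i.e. $\sum_t\chi_t(1)=$ number of square roots of $1$, which works out to $q^2+q$... I should verify this equals $1+(\text{number of involutions})$) and $S_z=\alpha$, this gives exactly the two stated cases. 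The main obstacle I anticipate is the bookkeeping for the unipotent and central columns: getting the powers of $q$ right in $S_1$, $S_z$ and the $zc,zd$ column sums, and correctly identifying \emph{which} of the four unipotent-type classes have vanishing column sum, since this is where the parity condition on $(q-1)/2$ enters and where an off-by-a-factor-of-$q$ error is easiest to make. The torus contributions, by contrast, are routine once the $g\sim g^{-1}$ class count is set up.
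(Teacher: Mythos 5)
Your overall strategy -- apply Theorem \ref{exact formula} to the explicit character table of $\mathrm{SL}(2,q)$ from Dornhoff, then get $a(n)$ from the central columns via Theorem \ref{irr faithful} with $Z(G)=\{1,z\}\cong C_2$ -- is exactly the paper's route, and your bookkeeping of conjugacy classes, class sizes, and the reduction of the asymptotic formula to $S_1=q^2+q$ and $S_z=\alpha$ is correct. The genuine gap is in the tool you choose to compute the column sums: the identity ``column sum $=$ number of square roots of $g_t$'' is the twisted involution formula $\#\{h: h^2=g\}=\sum_\chi \nu_2(\chi)\chi(g)$, and it only reduces to $\sum_\chi\chi(g)=S_g$ when every irreducible character has Frobenius--Schur indicator $+1$. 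That holds for $S_m$ (where the paper uses exactly this trick), but it fails for $\mathrm{SL}(2,q)$ with $q$ odd: the irreducibles on which $z$ acts by $-1$ are quaternionic ($\nu_2=-1$), and for $q\equiv 3\pmod 4$ the two characters of degree $(q+1)/2$ (resp.\ $(q-1)/2$) are a complex-conjugate pair ($\nu_2=0$). Your own sanity check already detects the failure: $\mathrm{SL}(2,q)$ has a unique involution, namely $z=-I$, so ``$1+\#\text{involutions}$'' gives $2$, not $q^2+q$. (What the square-root count at the identity actually computes is $\sum_\chi\nu_2(\chi)\chi(1)$, which for this group happens to equal $\sum_\chi\chi(1)=q^2+q$ only because the sign flips cancel -- but that is not the formula you invoked, and at the columns $c,d,zc,zd$ and $a^\ell,b^m$ the two quantities genuinely diverge.)

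The fix is to abandon the square-root interpretation and sum the columns of the character table directly, which is what the paper does. For instance, at $a^\ell$ with $\ell\not\equiv 0$: the principal series contribute $\sum_{i=1}^{(q-3)/2}(\rho^{i\ell}+\rho^{-i\ell})=-1-(-1)^\ell$, the discrete series and the two degree-$(q-1)/2$ characters vanish, the trivial and Steinberg characters contribute $1+1$, and the two degree-$(q+1)/2$ characters contribute $2(-1)^\ell$, giving $S_{a^\ell}=1+(-1)^\ell$; this is where the vanishing for odd $\ell$ and the value $2$ for even $\ell$ really come from, not from counting square roots in the torus. The analogous direct sums at $1$, $z$, $c$, $d$, $zc$, $zd$, $b^m$ produce $S_1=q^2+q$, $S_z=\alpha$, $S_c=S_d=0$, $S_{zc}=S_{zd}=\alpha$, and $S_{b^{2m}}=2$, $S_{b^{2m+1}}=0$, after which your assembly of class sizes and the factor $1/(q^3-q)$ goes through verbatim.
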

\begin{proof}
The formulas follow from Theorems \ref{exact formula} and \ref{asym theorem} via direct calculations using the character table of $\text{SL}(2,q)$, which can be found in \textit{e.g.} \cite[Section 38]{dornhoff1971group}.    
\end{proof}

The even case (\textit{i.e.} $p=2$) is analogous. In this case the center is trivial so we have no alternating behaviour in $a(n)$, and the asymptotic formula holds for all faithful $\C G$-modules.

\begin{Theorem}[$\textnormal{SL}(2,q)$, even case] Let $G=\textup{SL}(2,q)$ for $q=2^n$. For a $\C G$-module $V$ with character $\chi$, the exact growth rate is 
$$b(n)=\frac{1}{q^3-q}\Big(q^2 (\dim V)^n + q(q+1)\sum_{l=1}^{(q-2)/{2}}(\chi({a^{l}}))^n+q(q-1)\sum_{m=1}^{q/2}(\chi({b^{m}}))^n \Big),$$ and the asymptotic growth rate is 
$$a(n)=\frac{q}{q^2-1}(\dim V)^n.$$
    
\end{Theorem}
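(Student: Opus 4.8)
The plan is to invoke Theorem~\ref{exact formula} with $G=\mathrm{SL}(2,q)$, $q=2^n$, which reduces everything to reading off the conjugacy classes, their sizes $|C_t|$, and the column sums $S_t$ from the character table in \cite[Section 38]{dornhoff1971group}. First I would recall the conjugacy-class structure in the even case: the identity (size $1$), the unipotent class of $c$ (size $q^2-1$), the $(q-2)/2$ split semisimple classes $a^l$ (each of size $q(q+1)$, for $1\le l\le (q-2)/2$, with $a^l$ and $a^{-l}$ fused), and the $q/2$ nonsplit semisimple classes $b^m$ (each of size $q(q-1)$, for $1\le m\le q/2$, with $b^m$ and $b^{-m}$ fused). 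Note there is no central element besides the identity since $-1=1$ in characteristic $2$, which is why the final $a(n)$ has no periodic part.

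Next I would compute the column sums $S_t$. Since $\mathrm{SL}(2,2^n)\cong\mathrm{PSL}(2,2^n)$ has only real-valued characters and all Frobenius–Schur indicators are $+1$ (every element is conjugate to its inverse, and one checks the relevant indicator computation, or cites that $S_t$ equals the number of square roots of $g_t$ as in \cite[Theorem 1]{Wigner}), each $S_t$ is a nonnegative integer. The key numerical inputs are then: $S$ at the identity equals $|G|=q^3-q$ divided by nothing — more precisely the relevant product $|C_t|S_t$ at the identity contributes the $q^2(\dim V)^n$ term after dividing by $|G|=q^3-q$; at each split class $a^l$ one gets $S_{a^l}$ constant in $l$ so that $|C_{a^l}|S_{a^l}/|G| = q(q+1)/(q^3-q) = 1/(q-1)$ times an integer, matching the stated coefficient $q(q+1)/(q^3-q)$; similarly at the nonsplit classes. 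The unipotent class turns out to have column sum $0$ (its characters sum to zero), which is why no term for $c$ appears. Assembling these via $b(n)=\frac{1}{|G|}\sum_t|C_t|S_t(\chi(g_t))^n$ gives the displayed exact formula, with the understanding that the sums over $a^l$ and $b^m$ already account for the fusion of $g$ with $g^{-1}$.

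For the asymptotic formula $a(n)=\frac{q}{q^2-1}(\dim V)^n$, I would apply Theorem~\ref{main asym}(1) (equivalently, Corollary~\ref{p group}(1), since the center is trivial): the only eigenvalue of maximal modulus is $\dim V$, so $a(n)=\frac{1}{|G|}\big(\sum_{t=1}^N\chi_t(1)\big)(\dim V)^n$, and $\sum_t\chi_t(1)$ is exactly the column sum at the identity. The character-degree list of $\mathrm{SL}(2,2^n)$ is $1$, $q$ (the Steinberg), $q+1$ (with multiplicity $(q-2)/2$), and $q-1$ (with multiplicity $q/2$), so $\sum_t\chi_t(1)=1+q+\tfrac{q-2}{2}(q+1)+\tfrac{q}{2}(q-1)=q^2$, giving $a(n)=\frac{q^2}{q^3-q}(\dim V)^n=\frac{q}{q^2-1}(\dim V)^n$ as claimed; alternatively this is just the identity term of the exact $b(n)$.

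The main obstacle is bookkeeping rather than conceptual: one must be careful with the fusion of classes $g\leftrightarrow g^{-1}$ among the semisimple elements (so that $|C_t|$ and the index ranges in the stated sums are consistent), and one must correctly extract the column sums — in particular verifying that the unipotent column sums to zero and that the semisimple column sums are the constants that produce the clean coefficients $q(q+1)/(q^3-q)$ and $q(q-1)/(q^3-q)$. I would double-check all of this against the explicit table in \cite[Section 38]{dornhoff1971group} and against a direct Magma computation for small $q$ (e.g. $q=4,8$), as the paper does elsewhere.
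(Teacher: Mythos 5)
Your proposal is correct and follows essentially the same route as the paper, which likewise derives the even case by applying Theorem \ref{exact formula} (and Corollary \ref{p group} for $a(n)$, the center being trivial) to the character table of $\mathrm{SL}(2,q)$ from \cite[Section 38]{dornhoff1971group}; your square-root count (one square root for each semisimple class since the tori have odd order, none for the unipotent class since there are no elements of order $4$, and $q^2$ for the identity) correctly yields $S_{a^l}=S_{b^m}=1$, $S_c=0$, $S_1=q^2$, reproducing the stated coefficients.
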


We plot the ratio ${b(n)}/{a(n)}$ for the faithful irreducible $\C G$-modules for the case $q=5$ in Figure \ref{fig: Sl odd}. The convergence is rather fast for the  $\C G$-modules of dimension 4 and 6: in this case {the ratios of convergence are} $1/4 = 0.25$ and $1/6 \approx 0.167$ respectively. The convergence for the two-dimensional $\C G$-modules ($\eta_1$ and $\eta_2$ {in the notation of \cite[Section 38]{dornhoff1971group}}) is much slower, as in this case {the ratio of convergence coincides with} ${\lvert \chi_{\mathrm{sec}}\rvert}/{\dim V} =\lvert \exp(4\pi i/5)+\exp(6\pi i/5)\rvert/{2} \approx 0.809$.

\begin{figure}[H]
    \centering
    \begin{minipage}{0.40\textwidth}
        \centering
\includegraphics[width=0.9\textwidth]{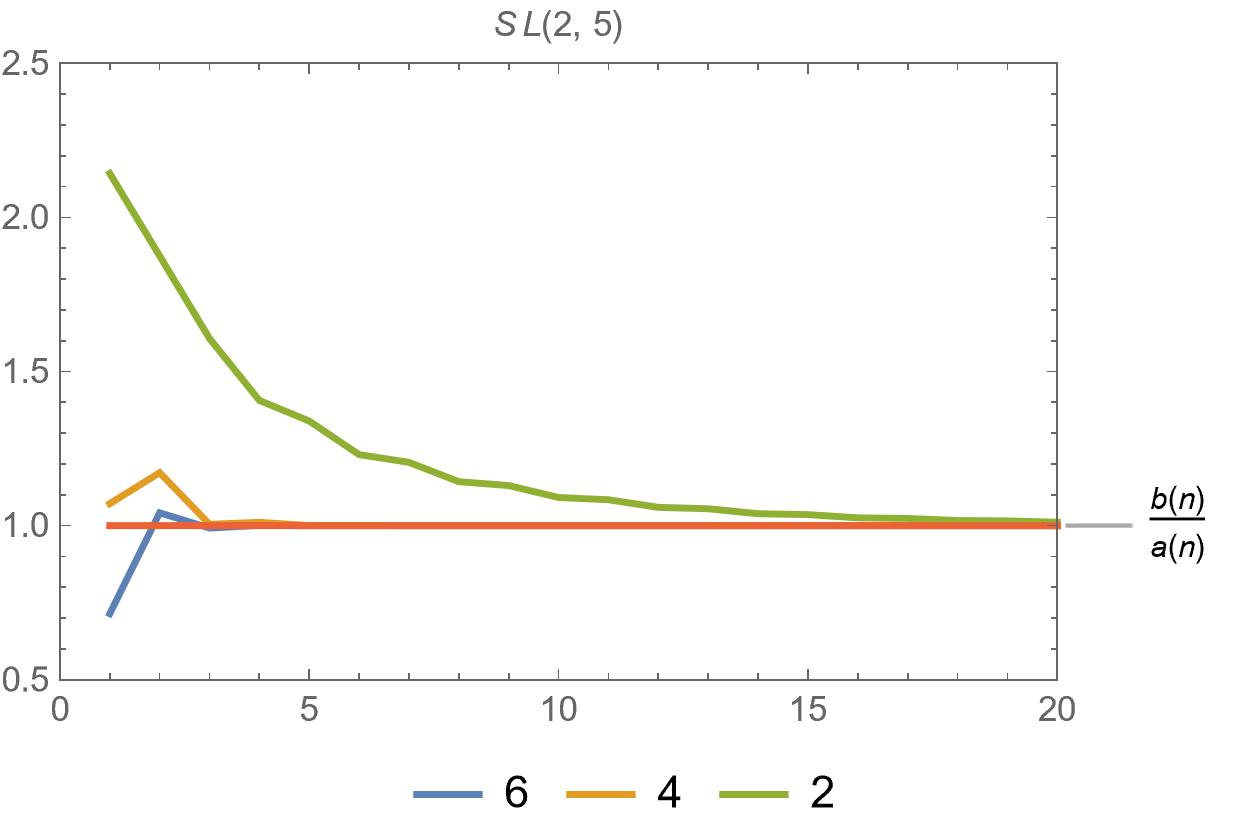}
    \end{minipage}
    \begin{minipage}{0.4\textwidth}
        \centering
    \includegraphics[width=0.9\textwidth]{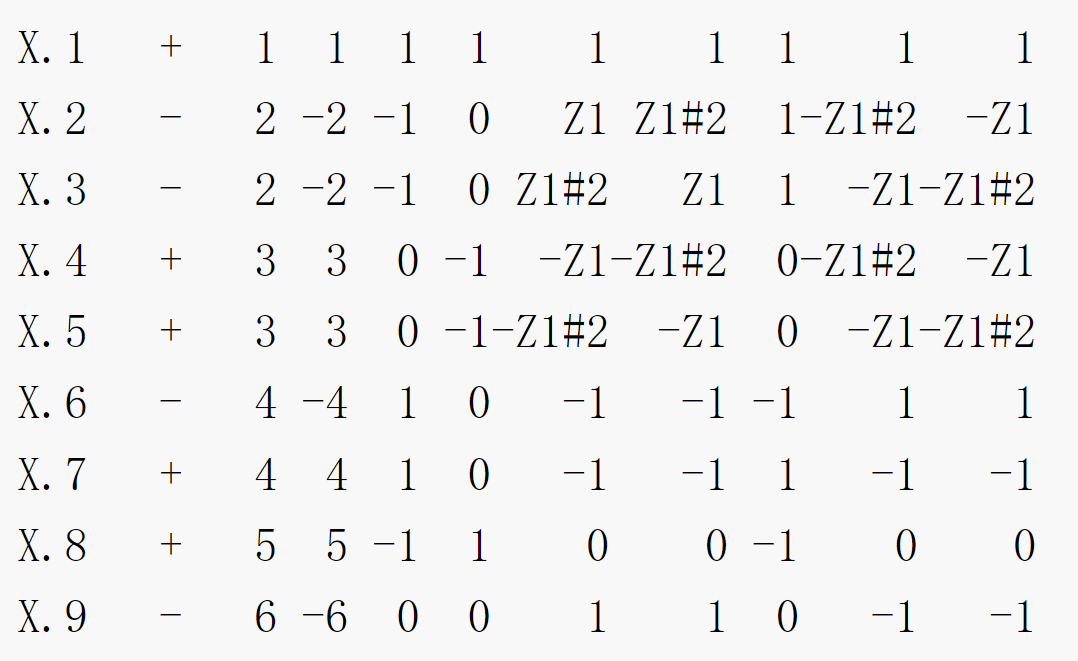}   
    \end{minipage}
     \caption{\textbf{Left}: The ratio ${b(n)}/{a(n)}$ for all faithful irreducible $\C \text{SL}(2,5)$-modules, labelled with their dimensions. \textbf{Right}: the character table for $\text{SL}(2,5)$ from Magma, where if we let $\omega = \exp({2\pi i / 5})$ the symbols $Z1,Z1\#2$ correspond to $\omega+\omega^4$ and $\omega^2+\omega^3$ respectively.}
     \label{fig: Sl odd}
\end{figure}

\subsection{A family of semidirect products}
In the examples discussed so far, the asymptotic formula $a(n)$ has period at most 2. Theorem \ref{irr faithful} suggests, however, that when the (cyclic) center of a group is large, we can expect increasingly complicated behaviour due to many roots of unity appearing in the asymptotic formula. We now demonstrate this by studying the family of groups $G=G(p,k,j)$ of order $p^{k+j}$, with presentation $\langle a,b \mid a^{p^k} = b^{p^j} = 1, bab^{-1} = a^{p^{k-j}+1} \rangle$, where $k,j \in \N, k-j\ge j$, and $p$ is a prime. We will denote such a group by $C_{p^k}\rtimes C_{p^j}$ though it may not be the only semidirect product.
\begin{Theorem} \label{semi}
Let $G=C_{p^k}\rtimes C_{p^j}$ and let $V$ be a faithful irreducible $\C G$-module. Let $[\omega_i^n]$ denote $\sum_{\omega} \omega^n$ where the sum is over all primitive $p^i$-th roots of unity. Then the growth rate for $V$ is 
 $$b(n)=a(n)= \frac{(p^j)^n}{p^{k+j}}\left((j+1)p^k-jp^{k-1}+(p^k-p^{k-1})\sum_{m=0}^{j-1} (m+1)[\omega_{j-m}^n]\right).$$    
\end{Theorem}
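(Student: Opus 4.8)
The plan is to apply Theorem \ref{exact formula} directly, so the entire problem reduces to three concrete tasks about the group $G = C_{p^k}\rtimes C_{p^j}$: (i) identify a faithful irreducible $\C G$-module $V$ and determine its dimension $\dim V = p^j$; (ii) describe the conjugacy classes of $G$, their sizes $|C_t|$, and the values $\chi(g_t)$ of the character of $V$; and (iii) compute the column sums $S_t$ of the character table of $G$. Since $G$ is a $p$-group, by Corollary \ref{p group} (part 2) we already know $a(n) = \frac{1}{|G|}(\dim V)^n = \frac{(p^j)^n}{p^{k+j}}\cdot p^k$ — wait, that is not quite the stated $a(n)$, so actually $Z(G)$ is \emph{not} trivial here and $G$ being a $p$-group does \emph{not} immediately give the simple formula; rather we must genuinely use the exact formula of Theorem \ref{exact formula}, with the periodic part coming from the (nontrivial, cyclic) center $Z(G)$ via Theorem \ref{irr faithful}. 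The claimed identity $b(n)=a(n)$ will then follow from the stronger fact that for \emph{this} $V$, all character values $\chi(g_t)$ with $|\chi(g_t)| < \dim V$ lie on columns with zero column sum — equivalently, $\chi_{\mathrm{sec}}$-type terms all vanish — so $b(n)$ collapses to exactly the central contribution, which is $a(n)$.

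First I would set up the representation theory of $G = \langle a, b \mid a^{p^k} = b^{p^j} = 1,\ bab^{-1} = a^{p^{k-j}+1}\rangle$. One checks that $Z(G) = \langle a^{p^{k-j}} \rangle \cong C_{p^j}$ (this uses $k - j \ge j$, which guarantees $p^{k-j} + 1$ acts trivially on $\langle a^{p^{k-j}}\rangle$ and that the commutator subgroup is $\langle a^{p^{k-j}}\rangle$). The abelianization is $G/[G,G] \cong C_{p^{k-j}} \times C_{p^j}$, giving $p^{k-j}\cdot p^j = p^k$ linear characters. The remaining irreducible characters are induced from linear characters of the abelian subgroup $A = \langle a \rangle \cong C_{p^k}$ (index $p^j$): for a character $\psi$ of $A$ that is not trivial on $\langle a^{p^{k-j}}\rangle$... actually more precisely, $\mathrm{Ind}_A^G \psi$ is irreducible of dimension $p^j$ precisely when the stabilizer of $\psi$ under the $C_{p^j}$-action is trivial, and by character-counting $\sum \chi(1)^2 = |G|$ one verifies the count: $p^k$ linear characters plus a number of degree-$p^j$ characters, and a faithful one exists iff $\psi$ is faithful on $A$ (which forces $V$ faithful since the induced module's restriction to $A$ sees all of $A$, and faithfulness on the normal-complement direction follows from the induced structure). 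Then $\dim V = p^j$ as claimed.

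Next I would compute the conjugacy classes and the character $\chi$ of $V = \mathrm{Ind}_A^G \psi$ with $\psi$ faithful. The standard induced-character formula gives $\chi(g) = \sum_{x \in G/A,\ x^{-1}gx \in A} \psi(x^{-1}gx)$, so $\chi$ vanishes off $A$ (more precisely off the union of conjugates of $A$, but since $A$ is normal, off $A$ entirely) — $\chi(g) = 0$ unless $g \in A = \langle a\rangle$. For $g = a^s \in A$, $\chi(a^s) = \sum_{m=0}^{p^j - 1} \psi(b^m a^s b^{-m}) = \sum_{m=0}^{p^j-1}\psi(a^{s(p^{k-j}+1)^m})$, which is $0$ unless $p^j \mid s$ (when the conjugates of $a^s$ scatter), and when $a^s \in Z(G)$, i.e. $s = p^{k-j}t$, we get $\chi(a^{p^{k-j}t}) = p^j\psi(a^{p^{k-j}t}) = p^j \omega^t$ where $\omega = \psi(a^{p^{k-j}})$ is a primitive $p^j$-th root of unity. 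So the \emph{only} nonzero character values occur on the center and on the classes $\{a^s : p^j \mid s,\ p^{k-j}\nmid s\}$. The conjugacy class sizes: central elements have class size $1$; a noncentral $a^s$ with $p^j \mid s$ has centralizer... one must track the orbit of $a^s$ under conjugation by $b$, giving orbit size $p^{j - v}$ where $v = v_p(s) - (k-j)$ adjusted — this bookkeeping, organized by the value $m$ such that the relevant root of unity has order $p^{j-m}$, is where the coefficients $(m+1)$ and the $[\omega_{j-m}^n]$ grouping come from.

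The main obstacle, and the step I'd budget the most care for, is the final assembly: plugging into $b(n) = \frac{1}{|G|}\sum_t |C_t|\, S_t\, \chi(g_t)^n$ and showing that the non-central terms all vanish \emph{or} recombine into exactly the stated sum. Two sub-points need care. (a) For columns corresponding to noncentral $a^s$ with $p^j\mid s$, I must show either the column sum $S_t = 0$, or — more likely — these are exactly the terms that produce the $\sum_{m=0}^{j-1}(m+1)[\omega_{j-m}^n]$ piece, meaning the claim $b(n)=a(n)$ is \emph{not} that they vanish but that $a(n)$ itself already includes them; here I'd need to double-check against Theorem \ref{irr faithful}, which says $a(n) = \frac{1}{|G|}\sum_{i=1}^{d}(\omega^i)^n S_{g^i}\cdot(\dim V)^n$ with $d = p^j$ — so $a(n)$ only ranges over the center, and the identity $b(n)=a(n)$ genuinely requires the noncentral columns to have zero column sum. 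So (a) becomes: \emph{prove $S_{a^s} = 0$ for every noncentral $a^s$ with $p^j\mid s$}. Since $G$ is a $p$-group with $p$ odd (or $p=2$ — need to handle both), the column sum $S_g$ equals the number of solutions to $x^2 = g$ weighted appropriately only when all Frobenius–Schur indicators are suitable; more robustly, $S_g = \sum_i \chi_i(g)$, and one shows this vanishes by exhibiting a nontrivial linear character $\lambda$ with $\lambda(g)\ne 1$ such that $\lambda \otimes -$ permutes the irreducibles fixing $g$'s column — i.e. $S_g = \lambda(g)S_g$. Concretely, if $a^s$ is noncentral, pick $\lambda$ a linear character nontrivial on $\langle a^s\rangle$ modulo $[G,G]$; this works precisely because $a^s \notin [G,G] = \langle a^{p^{k-j}}\rangle$ forces such $\lambda$ to exist, giving $S_{a^s}=0$. (b) Then compute $S_{a^{p^{k-j}t}}$ for central elements: $S_{a^{p^{k-j}t}} = \sum_{\text{linear }\lambda}\lambda(a^{p^{k-j}t}) + \sum_{\dim p^j \text{ irreps}}\chi(a^{p^{k-j}t})$. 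The linear part is $p^k$ if $t = 0$ and $0$ otherwise (the $p^k$ linear characters of $C_{p^{k-j}}\times C_{p^j}$ evaluated at a central element, which factors through the $C_{p^j}$ quotient...). The degree-$p^j$ part: each such $\chi$ contributes $p^j\cdot(\text{root of unity})$, and summing over all of them with the appropriate multiplicities yields the coefficient structure; carefully counting how many faithful-vs-nonfaithful-on-$\langle a^{p^{k-j}}\rangle$ induced characters there are, organized by the order $p^{j-m}$ of $\omega^{t}$ as $t$ varies, produces exactly $(j+1)p^k - jp^{k-1} + (p^k - p^{k-1})\sum_{m=0}^{j-1}(m+1)[\omega_{j-m}^n]$ after dividing by $|G| = p^{k+j}$ and multiplying by $(\dim V)^n = (p^j)^n$. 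I expect (a) (vanishing of noncentral column sums) to be the genuinely delicate part — it is what makes $b(n)=a(n)$ rather than $b(n)$ having extra lower-order terms — and it should be isolated as a lemma, proved via the $\lambda\otimes-$ twisting argument keyed to $[G,G] = \langle a^{p^{k-j}}\rangle$.
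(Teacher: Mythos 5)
Your overall strategy (apply Theorem \ref{exact formula}, show the character of $V$ vanishes off a small set, kill the remaining unwanted columns by tensoring with linear characters, then compute the surviving column sums) is viable, and your twisting argument $S_g=\lambda(g)S_g$ for $\lambda$ a linear character with $\lambda(g)\neq 1$ is a clean way to see that $S_{a^s}=0$ whenever $a^s\notin [G,G]=\langle a^{p^{k-j}}\rangle$ --- the paper instead gets this vanishing from a direct Ramanujan-sum computation. However, there are two genuine errors. First, you misidentify the center: from $ba^sb^{-1}=a^{s(p^{k-j}+1)}$ one gets $a^s\in Z(G)$ iff $p^j\mid s$, so $Z(G)=\langle a^{p^j}\rangle\cong C_{p^{k-j}}$ (as the paper states), not $\langle a^{p^{k-j}}\rangle\cong C_{p^j}$; what you have written down is $[G,G]$, which under the hypothesis $k-j\ge j$ is a \emph{proper} subgroup of $Z(G)$ in general. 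Consequently your appeal to Theorem \ref{irr faithful} with $d=p^j$ is unjustified as stated: $a(n)$ ranges over all $p^{k-j}$ central elements, and you would additionally need to show that the central elements outside $[G,G]$ contribute nothing (they do have zero column sum, by your own twisting lemma, but you never notice they exist). Your step (a), ``prove $S_{a^s}=0$ for noncentral $a^s$ with $p^j\mid s$,'' is vacuous for the wrong reason --- all such elements are central --- though the vanishing statement itself is what is needed.

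The more serious gap is in the column-sum computation (b), which is where essentially all the work of the theorem lives. Your classification of $\mathrm{Irr}(G)$ as ``$p^k$ linear characters plus a number of degree-$p^j$ characters'' is false for $j\ge 2$: induced characters $\mathrm{Ind}_A^G\psi$ with nontrivial proper stabilizer produce irreducibles of every intermediate dimension $p^m$, $1\le m\le j$ (the paper exhibits $p^{k-m-1}(p-1)$ of them for each $m$; e.g.\ for $C_{81}\rtimes C_9$ one has $81$ linear, $18$ three-dimensional and $6$ nine-dimensional irreducibles, and your count would instead predict $8$ nine-dimensional ones and nothing in dimension $3$). Since $S_t=\sum_i\chi_i(g_t)$ runs over \emph{all} irreducible characters, omitting the intermediate-dimensional ones makes it impossible to recover the coefficients $(m+1)(p^k-p^{k-1})$ in the stated formula: those coefficients arise precisely from summing, over $1\le m\le j$, the contributions $p^{j-m}\,C_{p^{k-j+m}}(x)$ of the dimension-$p^m$ characters (Ramanujan sums), which stack up layer by layer. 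To complete the proof you would need to (i) correct the center, (ii) produce the full list of irreducibles with their multiplicities (verified by $\sum\chi(1)^2=\lvert G\rvert$), and (iii) carry out the Ramanujan-sum bookkeeping that your sketch defers to ``carefully counting.''
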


{We note that the formula appears to not depend on $\dim V$ because all faithful irreducible $\C$G-modules have dimension $p^j$.}
To illustrate:
\begin{enumerate}
    \item For $G=C_{32} \rtimes C_{4}$, $V$ a  faithful 4-dimensional  irreducible $\C G$-module, Theorem \ref{semi} gives $$b(n)=a(n)= \frac{4^{n}}{128}\Big(64+32(-1)^n+16(i^n+(-i)^n)\Big).$$ 
    \item For $G=C_{81} \rtimes C_{9}$, $V$ a  faithful 9-dimensional  irreducible $\C G$-module, Theorem \ref{semi} gives $$b(n)=a(n)=\frac{9^n}{729}\Big(189+108(\omega^{3n}+\omega^{6n})+54(\omega^n+\omega^{2n}+\omega^{4n}+\omega^{5n}+\omega^{7n}+\omega^{8n})\Big),$$
    where $\omega$ is a primitive $9$-th root of unity.
\end{enumerate}

The two cases are illustrated in Figures \ref{Fig:c32-4} and \ref{Fig: c81-9}, where we plot the value of $a(n)$ divided by the leading growth rate (\text{i.e.} by $4^n\cdot 64/128$ and $9^n\cdot 189/729$ respectively). The subsequential limits of $c_V(n)$ depending on the value of $n \bmod {p^j}$ is reflected in the periodic alternations in the plots. In particular, Theorem \ref{semi} shows that the potential number of subsequential limits of $c_V(n)$ is unbounded.

\begin{figure}[H]
    \centering
    \begin{minipage}{0.45\textwidth}
        \centering
        \includegraphics[width=0.9\textwidth]{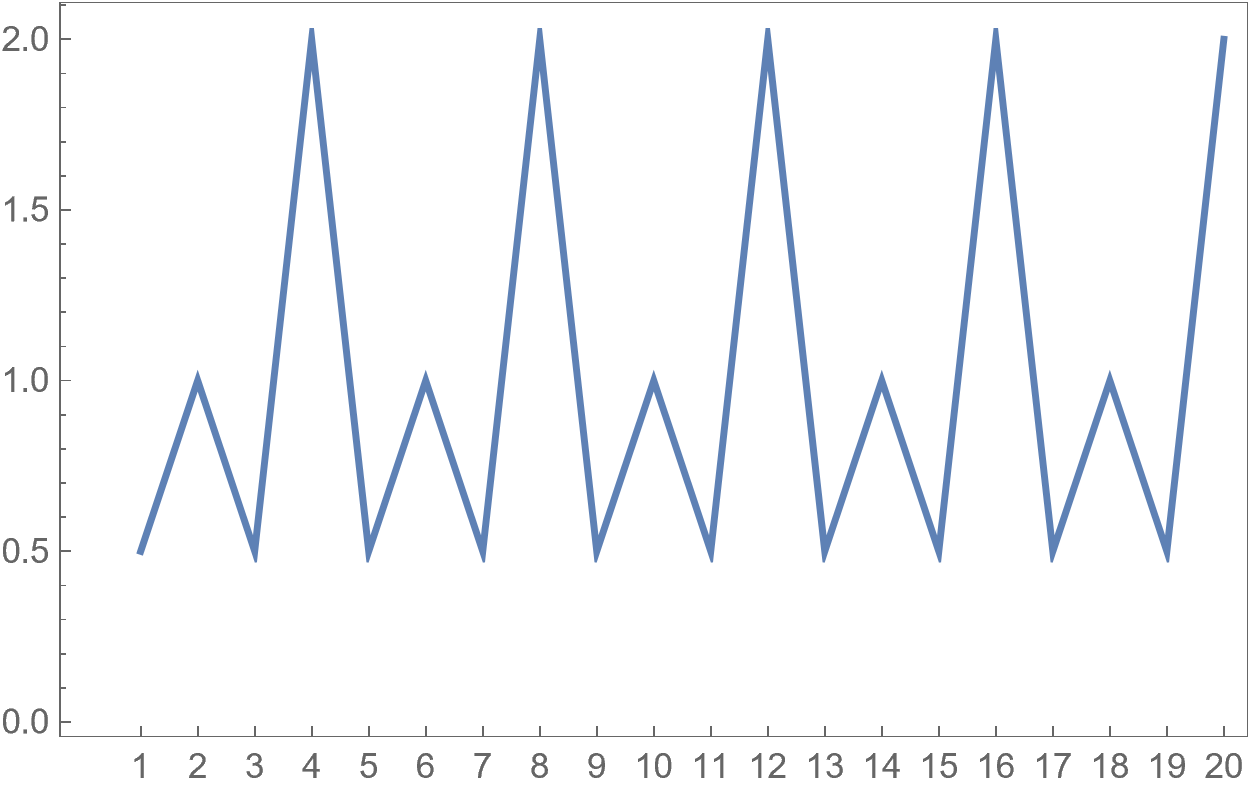}\caption{$C_{32}\rtimes C_4$}
        \label{Fig:c32-4}
    \end{minipage}\hfill
    \begin{minipage}{0.45\textwidth}
        \centering
    \includegraphics[width=0.9\textwidth]{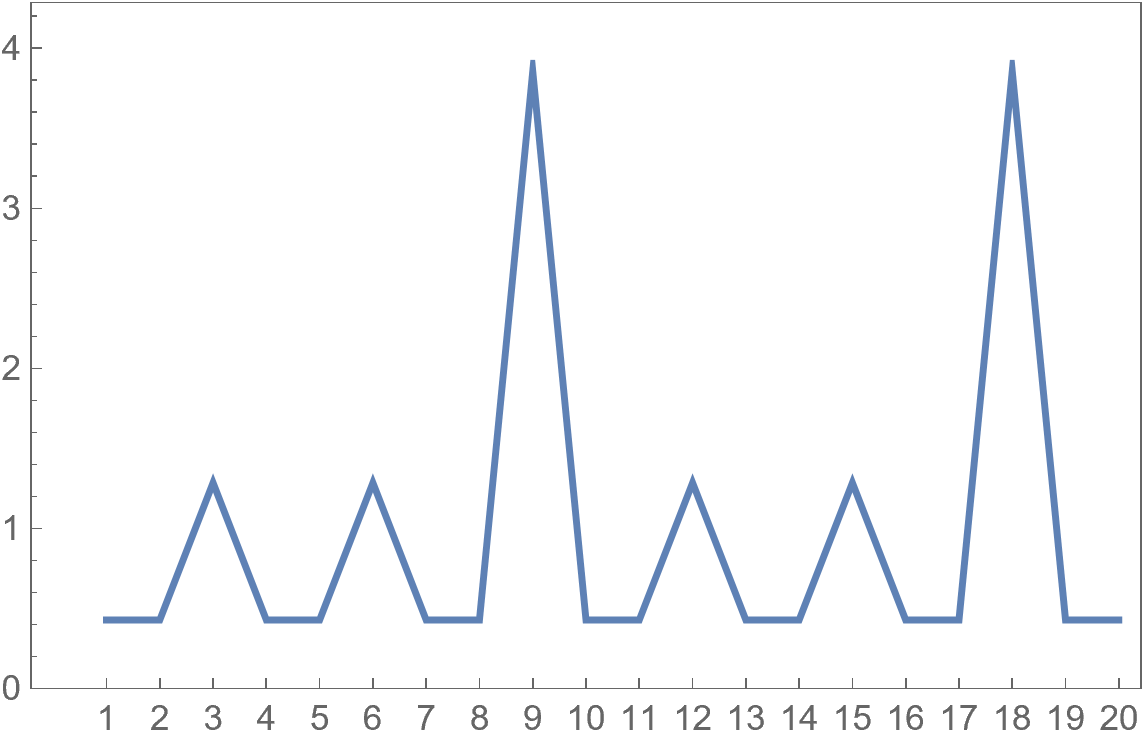}
        \caption{$C_{81}\rtimes C_9$}
     \label{Fig: c81-9}
    \end{minipage}
   
\end{figure}

\begin{proof}[Proof of Theorem \ref{semi}]
Let $G=C_{p^k}\rtimes C_{p^j}$. It can be verified that $Z(G)=\langle a^{p^j}\rangle \cong C_{p^{k-j}}$ and the irreducible $\C G$-modules are: 
\begin{enumerate}
    \item $p^k$ one-dimensional $\C G$-modules, defined by $\rho_{il}:a \mapsto \zeta^i$, $b\mapsto \alpha^l, 1\le i \le p^{k-j},1\le l\le p^j$, where $\zeta$ is a $p^{k-j}$-th root of unity and $\alpha$ is a $p^j$-th root of unity.  
    \item For $1 \le m \le j$, $p^{k-m-1}(p-1)$ distinct $\C G$-modules of dimension $p^m$, of the form $$\rho^m_{\eta,\delta}: a\mapsto  {\begin{pmatrix}
     \eta & 0 & \cdots & \cdots & 0\\
     0 & \eta^{p^{k-j}+1} & \cdots & \cdots & 0\\
     0 & 0 & \eta^{(p^{k-j}+1)^2} & \cdots & 0\\
     \vdots &\cdots & \cdots &\ddots & \vdots\\
     0 & \cdots & \cdots & \cdots& \eta^{(p^{k-j}+1)^{p^m -1}}
    \end{pmatrix}}, b\mapsto \begin{pmatrix}
       0 & \delta & 0 & \cdots & 0\\
       0 & 0 & 1 & \cdots & 0 \\
       \vdots & \vdots & \vdots & \ddots &  \vdots \\
       0 & \cdots & \cdots & \cdots & 1\\
       1 & 0 & \cdots & \cdots  & 0
    \end{pmatrix},$$
    where $\delta$ is a (not necessarily primitive) $p^{j-m}$-th root of unity and $\eta$ is a $p^{k-j+m}$-th primitive root of unity. {Note that different $\rho_{\eta,\delta}^m$'s may define isomorphic representations.}
\end{enumerate} 
{To see that all the $\rho_{\eta,\delta}^m$'s are irreducible, we note that the assumption $k-j\ge j$ guarantees that the diagonal entries of $\rho_{\eta,\delta}^m(a)$ are all distinct $p^{k-j+m}$-th roots of unity: we have \begin{align*}
(p^{k-j}+1)^l&=1+lp^{k-j}+\binom{l}{2}p^{2(k-j)}+\dots + p^{l(k-j)}\\
&\equiv 1+lp^{k-j} \pmod {p^{k-j+m}}
\end{align*} 
since $2(k-j)\ge k \ge {k-j+m}$, and so $(p^{k-j}+1)^l$ has multiplicative order $p^m$ modulo $p^{k-j+m}$. It follows that the trace of $\rho_{\eta,\delta}^m(a^x)$ vanishes when $p^m \nmid x$, as after factoring out $\eta$ the diagonal entries of $\rho_{\eta,\delta}^m(a^x)$ consist of distinct $p^m$-th roots of unity which sum to 0. Quick character calculations now confirm that $\rho_{\eta,\delta}^m$ is indeed irreducible. (If $k-j <j$, the $\rho_{\eta,\delta}^m$'s are not all guaranteed to be irreducible. For example, $C_{8}\rtimes C_4$ has no irreducible 4-dimensional representation.)\\
From characters we also see that $\rho_{\eta,\delta}^m \cong \rho_{\eta^{'}, \delta^{'}}^m$ if and only if $\eta'=\eta^{(p^{k-j}+1)^l}$ for some $0\le l \le p^m -1$  and $\delta=\delta'$. We deduce that there are $\phi(p^{k-j+m})/p^m \cdot p^{j-m} = p^{k-m-1}(p-1)$ distinct representations for each $m$. (Here $\phi$ is the Euler totient function.)}
The squares of the dimensions of the $\C G$-modules listed above sum to the group order, so we have found all the irreducibles. 

Character computation gives that the only faithful irreducible $\C G$-modules are those of dimension $p^j$. The characters for these modules vanish outside the center, so $b(n)=a(n)$. To calculate the column sums, we observe that the sum over all {distinct} $p^m$-dimensional characters evaluated at $a^x \in Z(G)$ is $p^{j-m}\sum_{\eta} \chi^m_\eta(a^x)=p^{j-m}C_{p^{k-j+m}}(x)$, where $\chi^m_{\eta}$ is the character of any $\rho^m_{\eta,\delta}$, and $C_{p^{k-j+m}}(x)$ is the Ramanujan sum over the $x$-th power of all primitive $p^{k-j+m}$-th roots of unity. Since central elements of the same order have the same corresponding column sum by Theorem \ref{irr faithful}, and the order of $a^x$ depends only on the number of times $p$ divides $x$, for {$p^z \le x < p^{z+1}$ we can treat $a^{x}$ as if it is $a^{p^{z}}$} when calculating the column sum. In fact, the column sum for $a^{p^{j+z}}$ vanish unless $z\ge k-2j$, so it suffices to compute $S_{a^y}$ for $y=k-j+w, 0\le w \le j$. Applying the formula for Ramanujan sum (see \textit{e.g} \cite[\S 3]{Hoelder1936}) gives $$S_m =\begin{cases}
0 & m > w+1,\\
-p^{k-1} & m = w+1,\\
(p-1)p^{k-1} & m < w+1,
\end{cases}$$
where, for fixed $y=k-j+w$, $S_m, 1\le m \le j$ denotes the sum over the $p^m$-dimensional characters evaluated at $a^y$. It is moreover easy to see that $S_0=p^k$, so $$S_{a^{y}}=p^k+\sum_{m=1}^{w} (p-1)p^{k-1}-p^{k-1}= (w+1)(p^k-p^{k-1}).$$
The statement of the theorem now follows from Theorem \ref{irr faithful} after observing that the character of {each} element $a^{lp^{j}}{\in Z(G)}$ where $p^{k-j+w} \le lp^j < p^{k-j+w+1}$, is $p^j$ multiplied by a distinct primitive $p^{j-w}$-th root of unity, and all primitive $p^{j-w}$-th roots of unity are obtained this way. {The assumption $k-j\ge j$ ensures that $Z(G)$ is big enough and we obtain all $p^j$ roots of unity in the formula.}
\end{proof}

\section{Nonsemisimple examples}
\subsection{Special and general linear groups $\textnormal{SL}(2,q)$ and $\textnormal{GL}(2,q)$}

{\begin{Theorem}
Let $G=\textup{SL}(2,q)$ or $\textup{GL}(2,q)$, where $q=p^r$ is a prime power, and let $\textup{char}\ k=p$. If $V$ is a faithful irreducible $kG$-module, then \begin{equation}\label{eqn: conjecture}
a(n)=\frac{(p+1)^r}{2^r(q+1)(q-1)}\Big(1+ \frac{1}{q}(-1)^n\Big)\cdot (\dim V)^n.    
\end{equation}
If $q$ is even, then there is no period, with {the formula as above but with the $(-1)^n$ term removed}.     
\end{Theorem}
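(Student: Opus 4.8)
The plan is to supply the two ingredients needed by Theorem \ref{main asym}: the group $Z_V(G)$ together with the scalars $\omega_V$, and the column sums $S_g=\sum_{t=1}^N\chi_t(g)$ of the irreducible Brauer character table at the central elements. Since $V$ is faithful and irreducible, Schur's lemma gives $Z_V(G)=Z(G)$ and $g\mapsto\omega_V(g)$ embeds $Z(G)$ into $k^\times$, so the sum in \eqref{eqn:asym} has one term per element of $Z(G)$. For $G=\mathrm{GL}(2,q)$ the center is the group of scalar matrices, $Z(G)\cong C_{q-1}$; for $G=\mathrm{SL}(2,q)$ it is $\{\pm I\}$, which is trivial when $q$ is even. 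In all cases $|Z(G)|$ is prime to $p$, so every central element is $p$-regular, and $\omega_V(-I)=-1$ whenever $-I\neq I$.

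Next I would describe the simple modules and the scalars by which central elements act, using Steinberg's tensor product theorem. For $G=\mathrm{SL}(2,q)$, $q=p^r$, the simple $kG$-modules are $L(\mathbf a)=L(a_0)\otimes L(a_1)^{[1]}\otimes\cdots\otimes L(a_{r-1})^{[r-1]}$ with $0\le a_i\le p-1$, where $L(a)=\mathrm{Sym}^a(k^2)$ has dimension $a+1$ and $[i]$ is the $i$-th Frobenius twist; there are $q$ of them, and $\dim L(\mathbf a)=\prod_i(a_i+1)$. As $-I$ has entries in $\mathbb F_p$ it is fixed by Frobenius, and it acts on $L(a)$ by $(-1)^a$, hence on $L(\mathbf a)$ by $(-1)^{a_0+\cdots+a_{r-1}}$. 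For $G=\mathrm{GL}(2,q)$, every simple $k\mathrm{SL}(2,q)$-module is $\mathrm{GL}(2,q)$-stable (each is the restriction of a module built from symmetric powers of the natural module, powers of $\det$, and Frobenius twists), so Clifford theory with the cyclic quotient $\mathrm{GL}(2,q)/\mathrm{SL}(2,q)\cong\mathbb F_q^\times$ shows that the simple $k\mathrm{GL}(2,q)$-modules are exactly the modules $\widetilde{L(\mathbf a)}\otimes(\theta\circ\det)$, where $\widetilde{L(\mathbf a)}$ is a fixed extension of $L(\mathbf a)$ and $\theta$ runs over the $q-1$ characters of $\mathbb F_q^\times$. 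Such a module has dimension $\prod_i(a_i+1)$, and a scalar $\lambda I$ acts on it by $\mu_{\mathbf a}(\lambda)\theta(\lambda^2)$ for some scalar $\mu_{\mathbf a}(\lambda)$ with $\mu_{\mathbf a}(-1)=(-1)^{\sum_i a_i}$.

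Then I would compute the column sums. For $G=\mathrm{GL}(2,q)$ and $z=\lambda I$, factoring the sum over pairs $(\mathbf a,\theta)$ gives $S_z=\big(\sum_{\mathbf a}\mu_{\mathbf a}(\lambda)\prod_i(a_i+1)\big)\big(\sum_{\theta\in\widehat{\mathbb F_q^\times}}\theta(\lambda^2)\big)$, and the inner sum equals $q-1$ if $\lambda^2=1$ and $0$ otherwise, by orthogonality of characters of $\mathbb F_q^\times$ (valid over $k$ since $p\nmid q-1$). Hence $S_z=0$ unless $\lambda=\pm1$, so the sum in \eqref{eqn:asym} collapses to the terms $z=I$ (always present) and $z=-I$ (present only for $q$ odd). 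Using $\sum_{a=0}^{p-1}(a+1)=\tfrac{p(p+1)}2$ and, for $p$ odd, $\sum_{a=0}^{p-1}(-1)^a(a+1)=\tfrac{p+1}2$, one gets $S_I=(q-1)\big(\tfrac{p(p+1)}2\big)^r$ and $S_{-I}=(q-1)\big(\tfrac{p+1}2\big)^r$; dividing by $|\mathrm{GL}(2,q)|=q(q-1)^2(q+1)$ yields $S_I/|G|=\tfrac{(p+1)^r}{2^r(q+1)(q-1)}$ and $S_{-I}/|G|=\tfrac1q\,S_I/|G|$, which together with $\omega_V(I)=1$ and $\omega_V(-I)=-1$ is exactly \eqref{eqn: conjecture}. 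The case $G=\mathrm{SL}(2,q)$ is the same computation with the $\theta$-sum deleted and $|G|=q(q-1)(q+1)$, and when $q$ is even there is no $-I$, so the $(-1)^n$ term is absent.

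The step I expect to require the most care is pinning down the classification of simple $k\mathrm{GL}(2,q)$-modules in defining characteristic — that every simple $k\mathrm{SL}(2,q)$-module extends to $\mathrm{GL}(2,q)$ and that the $q-1$ determinant-twists of each extension exhaust the simple $k\mathrm{GL}(2,q)$-modules without repetition — together with the bookkeeping of the scalars $\mu_{\mathbf a}(\lambda)$. This is a routine Clifford-theoretic argument once one notes the $\mathrm{GL}(2,q)$-stability (and can alternatively be quoted from the known modular representation theory of $\mathrm{GL}_2$); everything else reduces to Steinberg's theorem and elementary summation.
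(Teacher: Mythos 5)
Your proposal is correct and follows essentially the same route as the paper: apply Theorem \ref{main asym} to the Brauer character table of $\mathrm{SL}(2,q)$ (resp.\ its determinant twists for $\mathrm{GL}(2,q)$) and evaluate the column sums at the central elements. The only difference is that you derive the needed character data explicitly from Steinberg's tensor product theorem and Clifford theory (including the nice observation that the $\theta$-sum kills all central columns except $\pm I$), whereas the paper simply cites Brauer--Nesbitt and Srinivasan for the modular character tables; your computations $S_I/|G|=\tfrac{(p+1)^r}{2^r(q+1)(q-1)}$ and $S_{-I}=S_I/q$ check out.
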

\begin{proof}
We can applying Theorem \ref{main asym} to the modular character table of $\textup{SL}(2,q)$, which is worked out in \cite[\S 2]{Brauer}. The case for $\textup{GL}(2,q)$ is similar (in this case there are $(q-1)$ times more characters of each dimension coming from tensoring with some power of the one-dimensional determinant module, see \cite[\S 30]{Sri}, but the order of the group also increases by a factor of $(q-1)$ so $a(n)$ remains the same).    
\end{proof}
By \cite[Theorem B]{craven2013tensor}, the action matrix for an irreducible $kG$-module is actually finite. We compute some specific examples.} 

{\begin{Example} \label{irr counterexample}
 Let $G=\textup{SL}(2,7)$ and $\textup{char} \ k=7$.
 For the 4-dimensional irreducible $kG$-module we have 
 $$a(n)=\Big(\frac{1}{12}+\frac{1}{84}(-1)^n\Big) \cdot 4^n$$ and the eigenvalues are $\{\pm4,\pm(1+\omega+\omega^6),\pm(\omega^2+\omega^5+1),\pm(\omega^3+\omega^4+1),\pm1,0\}$, where $0$ has multiplicity 3, and $\omega=\exp(2\pi i/7)$. (See Section \ref{action magma} in the appendix for the code). Unlike in the characteristic zero case, these eigenvalues can no longer all be found in the (ordinary or modular) character table. In particular, the second largest eigenvalues $\pm (1+\omega+\omega^6)$ are not in the $\Z$-span of values in the character table of $G$, which is $\{a+b(\omega^2+\omega^4+\omega^7)+c(\zeta-\zeta^3) \mid a,b,c \in \Z\}$, where $\zeta = \exp(2\pi i/8)$.
  Here {the ratio of convergence coincides with} $\lvert \lambda^{\mathrm{sec}}\rvert/\dim V \approx 0.56.$ The fusion graph is plotted in Figure \ref{fig:sl(2,7)}, and the ratio $b(n)/a(n)$ is plotted in Figure \ref{fig:sl2,7 4d ratio}.
 \end{Example}
 \begin{figure}[H]
    \centering
    \begin{minipage}{0.40\textwidth}
    \centering
    \includegraphics[width=\textwidth]{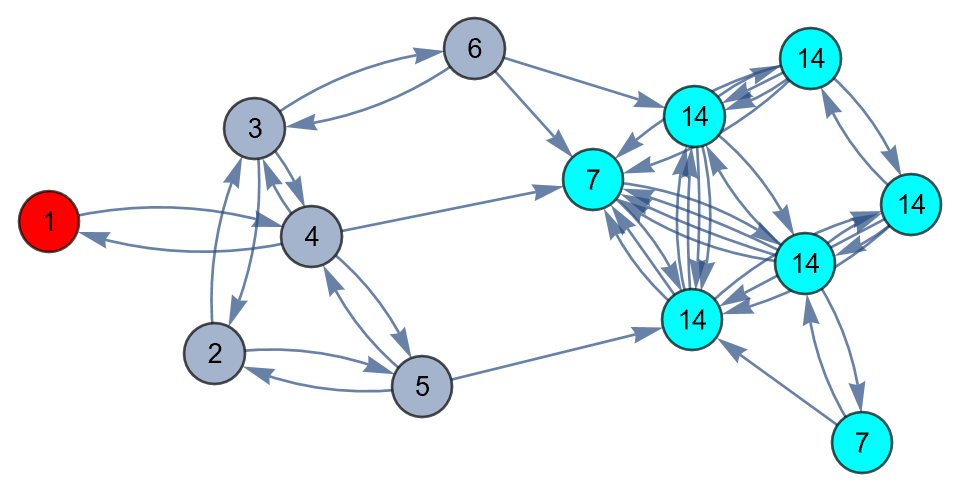} 
    \end{minipage}
    \begin{minipage}{0.4\textwidth}
     \centering
     $$\scalemath{0.6}{\begin{pmatrix}
0 & 1 & 0 & 0 & 0 & 0 & 0 & 0 & 0 & 0 & 0 & 0 & 0 \\
1 & 0 & 1 & 1 & 0 & 0 & 0 & 0 & 0 & 0 & 0 & 0 & 0 \\
0 & 1 & 0 & 0 & 1 & 1 & 0 & 0 & 0 & 0 & 0 & 0 & 0 \\
0 & 1 & 0 & 0 & 0 & 1 & 0 & 0 & 0 & 0 & 0 & 0 & 0 \\
0 & 0 & 1 & 0 & 0 & 0 & 0 & 0 & 0 & 0 & 0 & 0 & 0 \\
0 & 0 & 1 & 1 & 0 & 0 & 0 & 0 & 0 & 0 & 0 & 0 & 0 \\
0 & 1 & 0 & 0 & 1 & 0 & 0 & 2 & 3 & 0 & 0 & 0 & 2 \\
0 & 0 & 0 & 1 & 0 & 0 & 1 & 0 & 0 & 2 & 2 & 1 & 0 \\
0 & 0 & 0 & 0 & 0 & 0 & 1 & 0 & 0 & 1 & 1 & 1 & 0 \\
0 & 0 & 0 & 0 & 1 & 0 & 0 & 2 & 1 & 0 & 0 & 0 & 2 \\
0 & 0 & 0 & 0 & 0 & 0 & 0 & 1 & 1 & 0 & 0 & 0 & 1 \\
0 & 0 & 0 & 0 & 0 & 0 & 0 & 0 & 1 & 0 & 0 & 0 & 0 \\
0 & 0 & 0 & 0 & 0 & 0 & 0 & 0 & 0 & 1 & 1 & 0 & 0 \\
\end{pmatrix}}
$$
    \end{minipage}
    \caption{The fusion graph and action matrix for the four-dimensional irreducible $k\text{SL}(2,7)$-module in characteristic 7. The trivial and projective modules are colored red and cyan respectively.} 
    \label{fig:sl(2,7)}
\end{figure}
\begin{figure}[H]
    \centering    \includegraphics[width=0.4\textwidth]{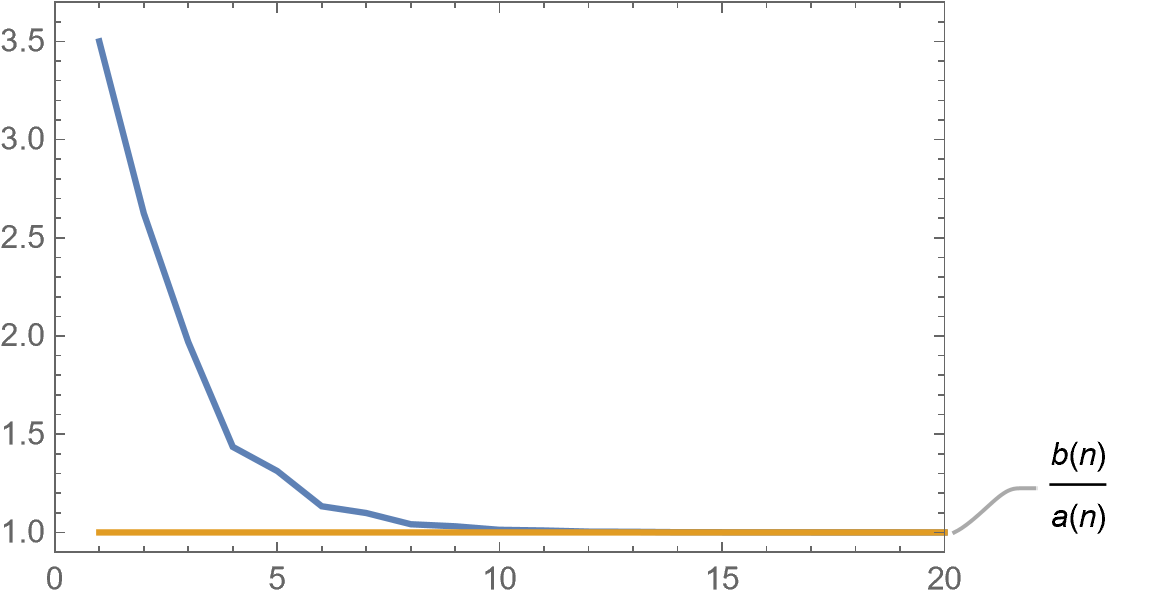}
    \caption{The ratio $b(n)/a(n)$ for the four-dimensional irreducible $k\text{SL}(2,7)$-module in characteristic 7. Here {the ratio of convergence is} $\lvert \lambda^{\mathrm{sec}}\rvert/\dim V \approx 0.56.$}
    \label{fig:sl2,7 4d ratio}
\end{figure}
\begin{Example} Let $G=\text{SL}(2,8)$ and $\text{char\ } k = 2$. For the two-dimensional faithful irreducible $kG$-module $V$ we have $a(n)=3/56 \cdot 2^n$
and $\lambda^{\mathrm{sec}}=\omega^5-\omega^2-\omega$, where $\omega=\exp(2\pi i/18)$. We plot the fusion graph and action matrix in Figure \ref{fig:sl(2,8)}. In this case {the ratio of convergence coincides with} $\lvert \lambda^{\mathrm{sec}}\rvert /\dim V \approx 0.94$, which explains the very slow rate of geometric convergence illustrated in Figure \ref{fig:SL28 convergence}.     
\end{Example}
 \begin{figure}[H]
    \centering
    \begin{minipage}{0.40\textwidth}
    \centering
\includegraphics[width=\textwidth]{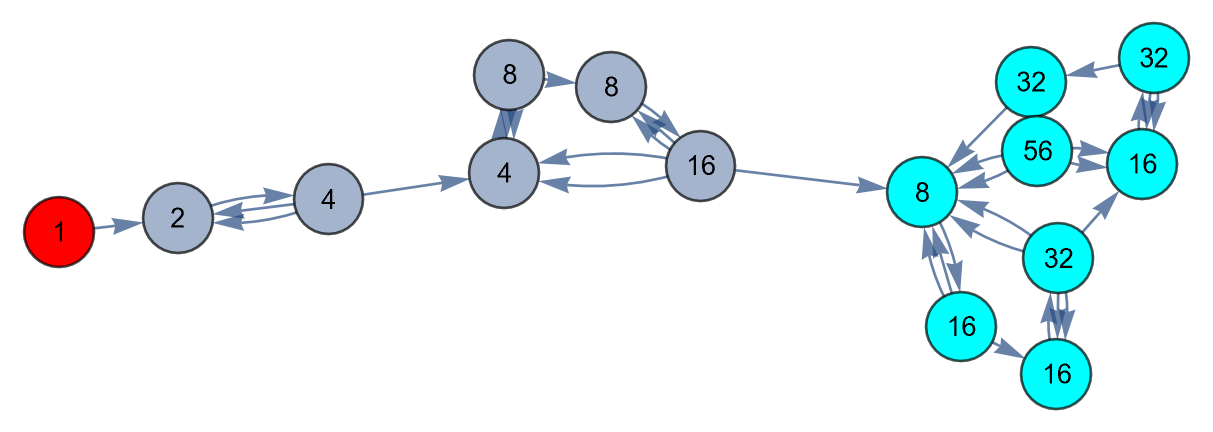} 
    \end{minipage}
    \begin{minipage}{0.4\textwidth}
     \centering   
     $$\scalemath{0.6}{\begin{pmatrix}
0 & 0 & 0 & 0 & 0 & 0 & 0 & 0 & 0 & 0 & 0 & 0 & 0 & 0 & 0 \\
1 & 0 & 2 & 0 & 0 & 0 & 0 & 0 & 0 & 0 & 0 & 0 & 0 & 0 & 0 \\
0 & 1 & 0 & 0 & 0 & 0 & 0 & 0 & 0 & 0 & 0 & 0 & 0 & 0 & 0 \\
0 & 0 & 1 & 0 & 2 & 0 & 2 & 0 & 0 & 0 & 0 & 0 & 0 & 0 & 0 \\
0 & 0 & 0 & 1 & 0 & 0 & 0 & 0 & 0 & 0 & 0 & 0 & 0 & 0 & 0 \\
0 & 0 & 0 & 0 & 1 & 0 & 2 & 0 & 0 & 0 & 0 & 0 & 0 & 0 & 0 \\
0 & 0 & 0 & 0 & 0 & 1 & 0 & 0 & 0 & 0 & 0 & 0 & 0 & 0 & 0 \\
0 & 0 & 0 & 0 & 0 & 0 & 1 & 0 & 2 & 0 & 2 & 0 & 0 & 1 & 2 \\
0 & 0 & 0 & 0 & 0 & 0 & 0 & 1 & 0 & 0 & 0 & 0 & 0 & 0 & 0 \\
0 & 0 & 0 & 0 & 0 & 0 & 0 & 0 & 1 & 0 & 2 & 0 & 0 & 0 & 0 \\
0 & 0 & 0 & 0 & 0 & 0 & 0 & 0 & 0 & 1 & 0 & 0 & 0 & 0 & 0 \\
0 & 0 & 0 & 0 & 0 & 0 & 0 & 0 & 0 & 0 & 1 & 0 & 2 & 0 & 2 \\
0 & 0 & 0 & 0 & 0 & 0 & 0 & 0 & 0 & 0 & 0 & 1 & 0 & 0 & 0 \\
0 & 0 & 0 & 0 & 0 & 0 & 0 & 0 & 0 & 0 & 0 & 0 & 1 & 0 & 2 \\
0 & 0 & 0 & 0 & 0 & 0 & 0 & 0 & 0 & 0 & 0 & 0 & 0 & 1 & 0 \\
\end{pmatrix}}
$$
    \end{minipage}
    \caption{The fusion graph and action matrix for the two-dimensional irreducible $k\text{SL}(2,8)$-module in characteristic 2. The trivial and projective modules are colored red and cyan respectively.} 
    \label{fig:sl(2,8)}
\end{figure}
\begin{figure}[H]
    \centering
    \includegraphics[width=0.5\linewidth]{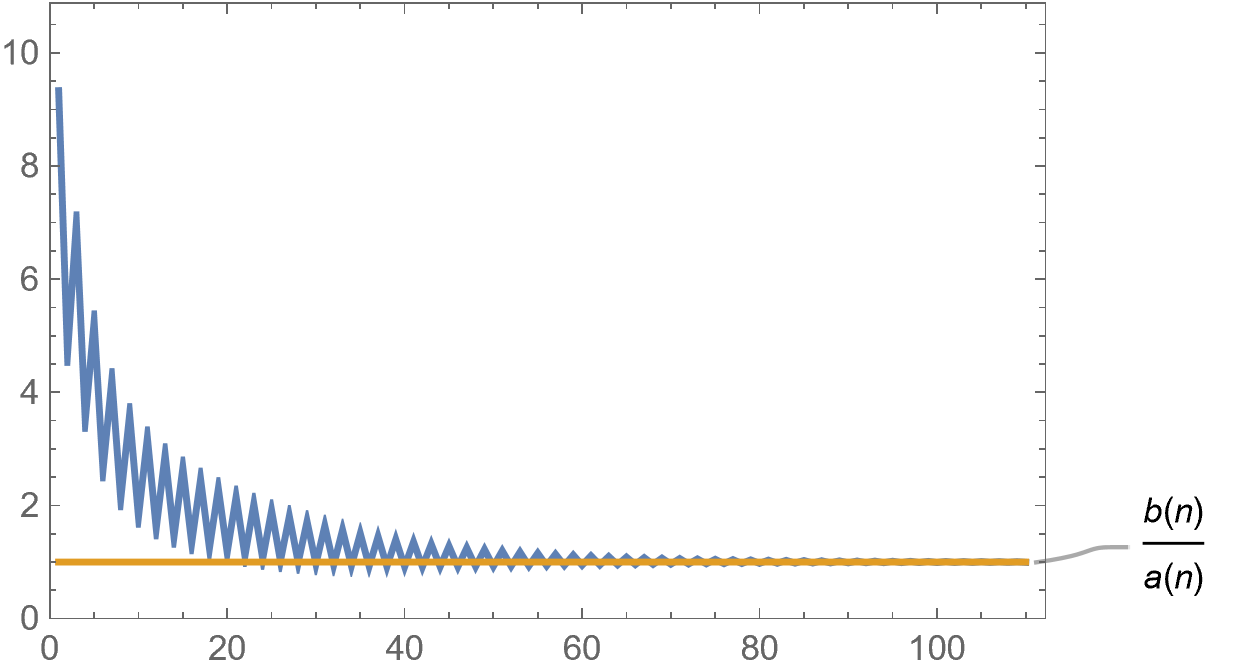}
    \caption{The ratio $b(n)/a(n)$ for the two-dimensional irreducible  $k\text{SL}(2,8)$-module in characteristic 2.}
    \label{fig:SL28 convergence}
\end{figure}}
{We saw in Example \ref{irr counterexample} that in the modular case the character table does not, in general, contain information about the second largest eigenvalue. We now turn to two examples where $a(n)$ is easy, because $G$ is a $p$-group, and determine the second largest eigenvalues (and hence obtain bounds on the rate of convergence and variance).}

\subsection{Cyclic groups}
Let $G=C_p$ be a cyclic group of prime order{$p >2$}, and let $k$ be an algebraically closed field of characteristic $p$. By Corollary \ref{p group} we know the asymptotic formula for a faithful $kG$-module {is} $a(n)=1/p \cdot (\dim V)^n$, so it remains to determine the rate of convergence and variance. We will do this by computing $\lvert \lambda^{\mathrm{sec}}\rvert$ for the action matrix associated with each indecomposable $kG$-module. For $G=C_p$, denote by $V_1,\dots, V_p$ the $p$ indecomposable $kG$-modules of dimension $1,\dots, p$. We have the following formula for tensor product decomposition:
\begin{Lemma}[\textup{\cite[Theorem 1]{renaud1979decomposition}}]\label{renaud}
 Let $V_l$, $1\le l\le p$ be as above. For $1\le r\le s \le p$, we have $$V_r \otimes V_s = \bigoplus_{i=1}^c V_{s-r+2i-1}\oplus V_p^{{\oplus (r-c)}}, \text{\ where \ }
c= \begin{cases}
    r & r+s\le p,\\
    p-s & r+s\ge p.
\end{cases}$$
\end{Lemma}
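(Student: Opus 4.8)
The plan is to reduce everything to a single computation in the Green ring (representation ring) $\mathcal{R} = a(kG)$ of $G = C_p$, which has $\Z$-basis $\{[V_1],\dots,[V_p]\}$ with multiplication induced by $\otimes_k$. Writing $g$ for a generator of $C_p$ one has $kG \cong k[t]/(t^p)$ via $g \mapsto 1+t$ (since $(1+t)^p = 1+t^p = 1$), so $V_\ell \cong k[t]/(t^\ell)$ and $V_p = kG$ is the unique projective-injective indecomposable. Consequently $V_p \otimes_k M$ is projective, hence free, for every module $M$; in particular $[V_2][V_p] = 2[V_p]$ in $\mathcal{R}$, and the asserted formula is immediate when $r = p$.

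The crucial input is the special case $V_2 \otimes V_\ell \cong V_{\ell+1} \oplus V_{\ell-1}$ for $2 \le \ell \le p-1$, which I would prove by an explicit basis computation. Taking bases $e_0,e_1$ of $V_2$ with $(g-1)e_1=e_0$, $(g-1)e_0=0$, and $f_0,\dots,f_{\ell-1}$ of $V_\ell$ with $(g-1)f_j=f_{j-1}$, the generator acts on $V_2\otimes V_\ell$ as $g = (1+s)(1+t)$ with $s^2=0$ and $t^\ell=0$; using $g^m(1\otimes 1) = (1+ms)\otimes(1+t)^m$ and elementary binomial identities one finds $(g-1)^\ell \neq 0$ while $(g-1)^{\ell+1} = 0$, and $\operatorname{rank}(g-1) = 2\ell-2$. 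Since every $k[t]/(t^p)$-module is a direct sum of the $V_j$, this forces the Jordan type of $g-1$ on $V_2\otimes V_\ell$ to be $(\ell+1,\ell-1)$. (The constraints $2\le\ell\le p-1$ are precisely what keeps the relevant integers invertible in $k$ and $\ell+1\le p$.)

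With this in hand I would bootstrap using Chebyshev-type polynomials: set $p_1=1$, $p_2=X$, $p_{\ell+1}=Xp_\ell - p_{\ell-1}$ in $\Z[X]$. The special case gives $[V_\ell] = p_\ell([V_2])$ for $1\le\ell\le p$ by induction, and then a short induction using $[V_2][V_p]=2[V_p]$ yields $p_{p+k}([V_2]) = 2[V_p] - [V_{p-k}]$ in $\mathcal{R}$ for $0\le k\le p$ (with $[V_0]:=0$). Applying the classical product identity $p_r p_s = \sum_{i=1}^{\min(r,s)} p_{|r-s|+2i-1}$ (provable by induction on $r$, or via $p_m(2\cos\theta)=\sin(m\theta)/\sin\theta$) gives, for $r\le s$,
\[
[V_r][V_s] \;=\; \sum_{i=1}^{r} p_{s-r+2i-1}\big([V_2]\big),
\]
and, since $s\le p$ forces each exponent $m_i := s-r+2i-1$ into $[1,2p-1]$, we may substitute $p_{m_i}([V_2]) = [V_{m_i}]$ when $m_i\le p$ and $p_{m_i}([V_2]) = 2[V_p]-[V_{2p-m_i}]$ when $m_i > p$.

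The remaining work --- and the step I expect to be the main obstacle --- is the combinatorial bookkeeping. When $r+s\le p$, every $m_i$ is at most $s+r-1<p$, so no reflection occurs and one reads off the pure Clebsch--Gordan answer $\bigoplus_{i=1}^{r}V_{s-r+2i-1}$, matching the statement with $c=r$. When $r+s\ge p$, one must verify that the ``reflected'' contributions $-[V_{2p-m_i}]$ produced by the over-large $m_i$ exactly cancel the top part of the family $\{[V_{m_j}] : m_j\le p\}$, leaving precisely $[V_{m_1}]+\dots+[V_{m_{p-s}}]$ among the non-projective summands; a short case analysis on the parity of $p-s+r$ (equivalently, on whether some $m_i$ equals $p$) then shows the total multiplicity of $[V_p]$ equals $r+s-p = r-c$, as claimed.
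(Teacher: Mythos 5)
The paper itself gives no proof of this lemma: it is stated as a citation to Renaud's 1979 theorem. Your proposal is therefore not ``the same approach as the paper'' in any meaningful sense, but it is a correct self-contained argument, and I checked the key steps.

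The core special case is sound. Writing $g-1 = t + s(1+t)$ with $s^2 = 0$, $t^\ell = 0$, one gets $(g-1)^m = t^m + m\,s\,t^{m-1}(1+t)$, hence $(g-1)^\ell = \ell\,s\,t^{\ell-1} \neq 0$ (using $2\le\ell\le p-1$ so $\ell$ is a unit in $k$) and $(g-1)^{\ell+1} = 0$; together with $\operatorname{rank}(g-1) = 2\ell - 2$, which forces exactly two Jordan blocks, this pins the type to $(\ell+1,\ell-1)$. The induction $[V_\ell] = p_\ell([V_2])$ for $1\le\ell\le p$ and the ``reflection'' identity $p_{p+k}([V_2]) = 2[V_p] - [V_{p-k}]$ for $0\le k\le p$ both run cleanly from the recursion $p_{\ell+1} = Xp_\ell - p_{\ell-1}$ and the facts $[V_2][V_\ell]=[V_{\ell+1}]+[V_{\ell-1}]$, $[V_2][V_p]=2[V_p]$. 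The product identity $p_r p_s = \sum_{i=1}^{\min(r,s)} p_{|r-s|+2i-1}$ is classical and, combined with $s\le p$ forcing $1\le m_i := s-r+2i-1 \le 2p-1$, gives a well-defined expansion.

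The bookkeeping you flag as the main obstacle does indeed close. When $r+s>p$, write $a = \#\{i : m_i > p\}$ and $\epsilon = [\,p \in \{m_i\}\,]$; one checks directly that $\epsilon = 1$ iff $p-s+r$ is odd and that $2a + \epsilon = r + s - p$ in both parity cases. Each $m_i>p$ contributes $2[V_p] - [V_{2p-m_i}]$, and since the $m_i$'s form an arithmetic progression of step $2$ containing all integers of a fixed parity in $[s-r+1, s+r-1]$, the reflected values $2p-m_i$ land exactly on the $a$ largest of the $m_j<p$, cancelling them. What survives outside $[V_p]$ is precisely $[V_{m_1}],\dots,[V_{m_{r-2a-\epsilon}}]$, and $r - 2a - \epsilon = p - s = c$; the $[V_p]$-multiplicity is $2a + \epsilon = r - c$. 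So the argument is complete. Renaud's original proof (as with Srinivasan's and Conlon's related computations) proceeds by a more hands-on Jordan-block analysis of $V_r\otimes V_s$; your Green-ring/Chebyshev route is a known and arguably cleaner alternative, at the cost of one parity case split at the end.
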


The formula implies that when $l$ is odd, tensor powers of $V_{l}$ contain only summands $V_j$ where $j$ is odd. When $l$ is even, tensor powers of $V_l$ contain both odd and even $V_j$'s as summands. We give the fusion graph and action matrix of $V_3$ (as a $kC_5$-module) in Figure \ref{fig:V3}. Since $3$ is odd, the even and odd dimensional modules form two connected components. (Here we use $m$ edges to represent an edge of weight $m$). For $1\le l \le p$, we denote by $M_l$ the action matrix of $V_l$. We assume all action matrices are with respect to the basis $(V_1,\dots,V_p).$

\begin{figure}[h]
    \centering
    \begin{minipage}{0.40\textwidth}
        \centering
\includegraphics[width=1\textwidth]{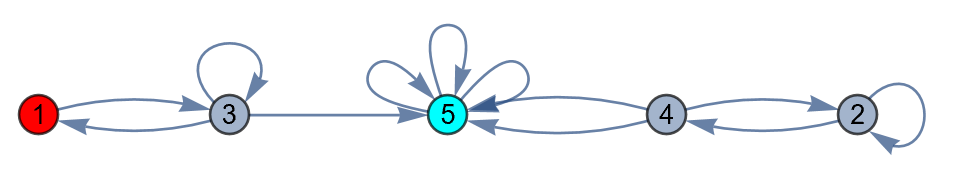} 
    \end{minipage}
    \begin{minipage}{0.4\textwidth}
        \centering
        \small
     $$\begin{pmatrix}
     0 & 0 & 1 & 0 & 0\\
     0 & 1 & 0 & 1 & 0\\
     1 & 0 & 1 & 0 & 0\\
     0 & 1 & 0 & 0 & 0\\
     0 & 0 & 1 & 2 & 3
    \end{pmatrix}$$   
    \end{minipage}
    \caption{The fusion graph and action matrix of $V_3$ (as a $kC_5$-module). The vertices are labeled with their dimensions. The trivial and projective modules are colored red and cyan respectively.}
    \label{fig:V3}
\end{figure}

\begin{Theorem}[Cyclic groups] \label{cyclic}
Suppose $p$ is an odd prime with $p=2n+1$. Let $\lambda^{\mathrm{sec}}_l$ denote the modulus of the second largest eigenvalue of $M_l$. We have that for $1\le l \le n$,
 $$\lvert \lambda^{\mathrm{sec}}_l\rvert =\lvert \lambda^{\mathrm{sec}}_{p-l}\rvert =
 \phi_n(l):=\frac{\sin({l\pi}/(2n+1))}{\sin({\pi}/(2n+1))}$$
 where out of convenience we define $\lambda_{1}^{\mathrm{sec}}$ to be 1.
  
\end{Theorem}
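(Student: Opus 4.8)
The plan is to compute the entire spectrum of each action matrix $M_l$ and then read off its second‑largest element in modulus.

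First I would pin down $\operatorname{Spec}(M_l)$. Since $V_l\otimes V_p=V_p^{\oplus l}$ by Lemma~\ref{renaud}, the line $\C V_p$ is invariant under $M_l$ and $M_l$ acts on it by the scalar $l=\dim V_l$, so by block triangularity $\operatorname{Spec}(M_l)=\{l\}\sqcup\operatorname{Spec}(\overline{M_l})$, where $\overline{M_l}$ is the induced operator on $\C^p/\C V_p$. I would identify $\C^p/\C V_p$ with the complexified \emph{stable} representation ring $\overline R$ on the basis of the classes of $V_1,\dots,V_{p-1}$, so that $\overline{M_l}$ is multiplication by $[V_l]$. Lemma~\ref{renaud} gives $[V_2][V_d]=[V_{d-1}]+[V_{d+1}]$ in $\overline R$ for $1\le d\le p-1$ (with $[V_0]=[V_p]=0$), hence $[V_d]=\widetilde U_d([V_2])$ with $\widetilde U_d$ the normalized Chebyshev polynomial ($\widetilde U_1=1$, $\widetilde U_2=t$, $\widetilde U_{d+1}=t\widetilde U_d-\widetilde U_{d-1}$, i.e.\ $\widetilde U_d(2\cos\varphi)=\sin(d\varphi)/\sin\varphi$). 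Thus $\overline R=\C[[V_2]]$ and $[V_2]$ is a root of $\widetilde U_p$; a dimension count ($\dim_\C\overline R=p-1=\deg\widetilde U_p$) gives $\overline R\cong\C[t]/(\widetilde U_p(t))$, and since $\widetilde U_p$ has the $p-1$ distinct roots $2\cos(m\pi/p)$, the Chinese Remainder Theorem yields $\overline R\cong\prod_{m=1}^{p-1}\C$ with $[V_d]$ mapping to $\sin(dm\pi/p)/\sin(m\pi/p)$ in the $m$-th factor. Therefore $\operatorname{Spec}(M_l)=\{l\}\cup\{\sin(lm\pi/p)/\sin(m\pi/p):1\le m\le p-1\}$.

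Next, as $|\sin(l\varphi)|<l|\sin\varphi|$ for $\varphi\in(0,\pi)$, every eigenvalue in the second set has modulus $<l$, so $l$ is the unique eigenvalue of largest modulus (when $l\ge 2$) and $|\lambda^{\mathrm{sec}}_l|=\max_{1\le m\le p-1}|\sin(lm\pi/p)/\sin(m\pi/p)|$. The substitution $m\mapsto p-m$ shows this quantity is symmetric in $m\leftrightarrow p-m$, and since $\sin((p-l)m\pi/p)=-(-1)^m\sin(lm\pi/p)$ it is also unchanged under $l\mapsto p-l$; in particular $|\lambda^{\mathrm{sec}}_{p-l}|=|\lambda^{\mathrm{sec}}_l|$. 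So everything reduces to showing that for $1\le l\le n$,
\[\max_{1\le m\le p-1}\Bigl|\tfrac{\sin(lm\pi/p)}{\sin(m\pi/p)}\Bigr|=\tfrac{\sin(l\pi/p)}{\sin(\pi/p)}=\phi_n(l),\]
i.e.\ the maximum is attained at $m=1$. The cases $l=1$ (everything equals $1$, matching the convention for $\lambda^{\mathrm{sec}}_1$) and $l=2$ (where $|\sin(2m\pi/p)/\sin(m\pi/p)|=2|\cos(m\pi/p)|$ is visibly largest at $m=1$) are immediate, so I may assume $l\ge 3$.

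The analytic heart, and the step I expect to cost the most effort, is this last claim for $l\ge3$. Writing $P=U_{l-1}$ (Chebyshev of the second kind, so $\sin(l\varphi)/\sin\varphi=P(\cos\varphi)$) and $x_m=\cos(m\pi/p)$, and using the $m\leftrightarrow p-m$ symmetry together with $P(-x)=\pm P(x)$, it suffices to bound $|P(x_m)|$ for $2\le m\le n$; note $x_m\in(0,\cos(2\pi/p)]$ while $x_1=\cos(\pi/p)>\cos(2\pi/p)$. The structural facts I would establish are: (i) all critical points of $P$ lie strictly between its outermost roots, i.e.\ among $\{\cos\psi:\psi\in(\pi/l,\pi-\pi/l)\}$ — proved by showing $\varphi\mapsto\sin(l\varphi)/\sin\varphi$ is strictly monotone on $(0,\pi/l)$ via an elementary derivative computation, then Rolle's theorem between consecutive roots — so $P$ is strictly increasing on $[\cos(\pi/l),1]$ and every interior extremum of $|P|$ has value $|\sin(l\psi)|/\sin\psi<1/\sin(\pi/l)$. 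Then, for $2\le m\le n$: if $x_m\ge\cos(\pi/l)$ (and $\cos(\pi/l)\le\cos(2\pi/p)$ since $2l\le p$), monotonicity gives $0\le P(x_m)\le P(\cos(2\pi/p))<P(x_1)$; if instead $x_m<\cos(\pi/l)$, then $|P(x_m)|\le\max(|P(0)|,1/\sin(\pi/l))=1/\sin(\pi/l)$ because $|P(0)|=|\sin(l\pi/2)|\le1$. Finally Jordan's inequality gives $\sin(\pi/l)\ge2/l$ and $\sin(l\pi/p)\ge2l/p$, so $\sin(\pi/l)\sin(l\pi/p)\ge4/p>\pi/p\ge\sin(\pi/p)$, which rearranges to $1/\sin(\pi/l)<\sin(l\pi/p)/\sin(\pi/p)=P(x_1)$. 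In both cases $|P(x_m)|<P(x_1)=\phi_n(l)$, finishing the proof. The only genuinely delicate point is the location and size of the extrema of Chebyshev polynomials in (i); everything else is bookkeeping with Lemma~\ref{renaud}, the Chinese Remainder Theorem, and two one‑line trigonometric estimates.
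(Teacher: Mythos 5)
Your proposal is correct, and it reaches the spectrum of $M_l$ by a genuinely different (and in one respect more complete) route than the paper. The paper's proof (via Lemma~\ref{eigenvalue cyclic}) writes down two explicit families of left eigenvectors, common to all the $M_l$, and verifies by direct computation against Lemma~\ref{renaud} that they are eigenvectors; the eigenvalues then appear as signed sums of $l$ distinct $p$-th roots of unity in arithmetic progression, and the maximization is settled by appealing to the geometric fact that such a sum has maximal modulus when the roots are consecutive. You instead diagonalize structurally: the line spanned by $[V_p]$ is an ideal of the Green ring on which $[V_l]$ acts by $l$, and the stable quotient is $\C[t]/(\widetilde U_p(t))$ with $[V_l]\mapsto \widetilde U_l(t)$, so the Chinese Remainder Theorem hands you the eigenvalues $\sin(lm\pi/p)/\sin(m\pi/p)$ at once — this is the same spectrum (your $m\leftrightarrow p-m$ symmetry accounts for the paper's $\pm$ pairs and multiplicities), obtained without the ``tedious but straightforward'' eigenvector verification. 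More significantly, your Chebyshev-extrema-plus-Jordan argument actually \emph{proves} that the maximum over $m$ is attained at $m=1$, whereas the paper only asserts the corresponding fact about sums of roots of unity as ``easy to see''; you need only the arithmetic-progression case of that fact, and you supply a self-contained proof of it. The one point worth stating explicitly in a write-up is the multiset identity $\operatorname{Spec}(M_l)=\{l\}\sqcup\operatorname{Spec}(\overline{M_l})$ coming from block-triangularity with respect to the ideal $\C[V_p]$, and the observation that $|\sin(l\theta)|<l\sin\theta$ for $\theta\in(0,\pi)$ so that $l$ really is the unique eigenvalue of maximal modulus; both are present in your sketch and are easy to flesh out.
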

We note that $\phi_n(l)$ is the length of the $l$-th shortest diagonal in a regular $(2n+1)$-gon with unit side length, or equivalently the modulus of the sum of $l$ consecutive $(2n+1)$-th roots of unity: \cite{anderson20fibonacci} calls these the \textit{golden numbers}.  We defer the proof of Theorem \ref{cyclic} to the end of the section.

We plot the ratio ${b(n)}/{a(n)}$ for the nontrivial indecomposable $kC_5$-modules in Figure \ref{fig:c5 graph}. The plot for $V_5$ is omitted since in this case $a(n)=b(n)$. For $V_2,V_3,V_4$, the modulus of the second largest eigenvalue are given by $\phi,\phi$ and 1 respectively, where $\phi=\phi_2(2)=\phi_3(2)\approx 1.618$ is the golden ratio. Thus, we have {${\lvert \lambda_2^{\mathrm{sec}}\rvert}/{\dim V_2}\approx 0.809,{\lvert \lambda_3^{\mathrm{sec}}\rvert}/{\dim V_3}\approx 0.539,{\lvert \lambda_4^{\mathrm{sec}}\rvert}/{\dim V_4}=0.25$}, which explains the rate of convergence shown in Figure \ref{fig:c5 graph}.

\begin{figure}[H]
    \centering
    \begin{minipage}{0.40\textwidth}
    \centering
\includegraphics[scale = 0.58]{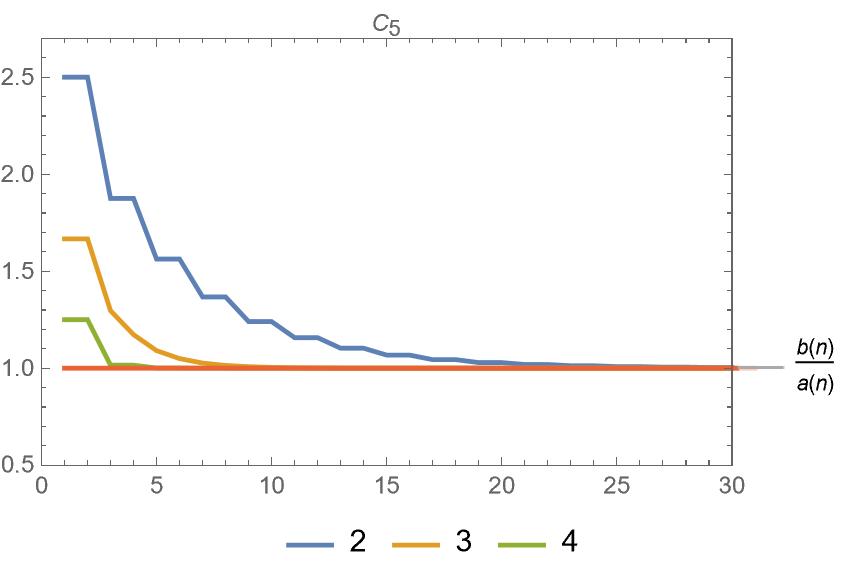}
    \caption{$p=5$} 
    \label{fig:c5 graph}   
    \end{minipage}
       \begin{minipage}{0.40\textwidth}
    \centering
\includegraphics[scale=0.38]{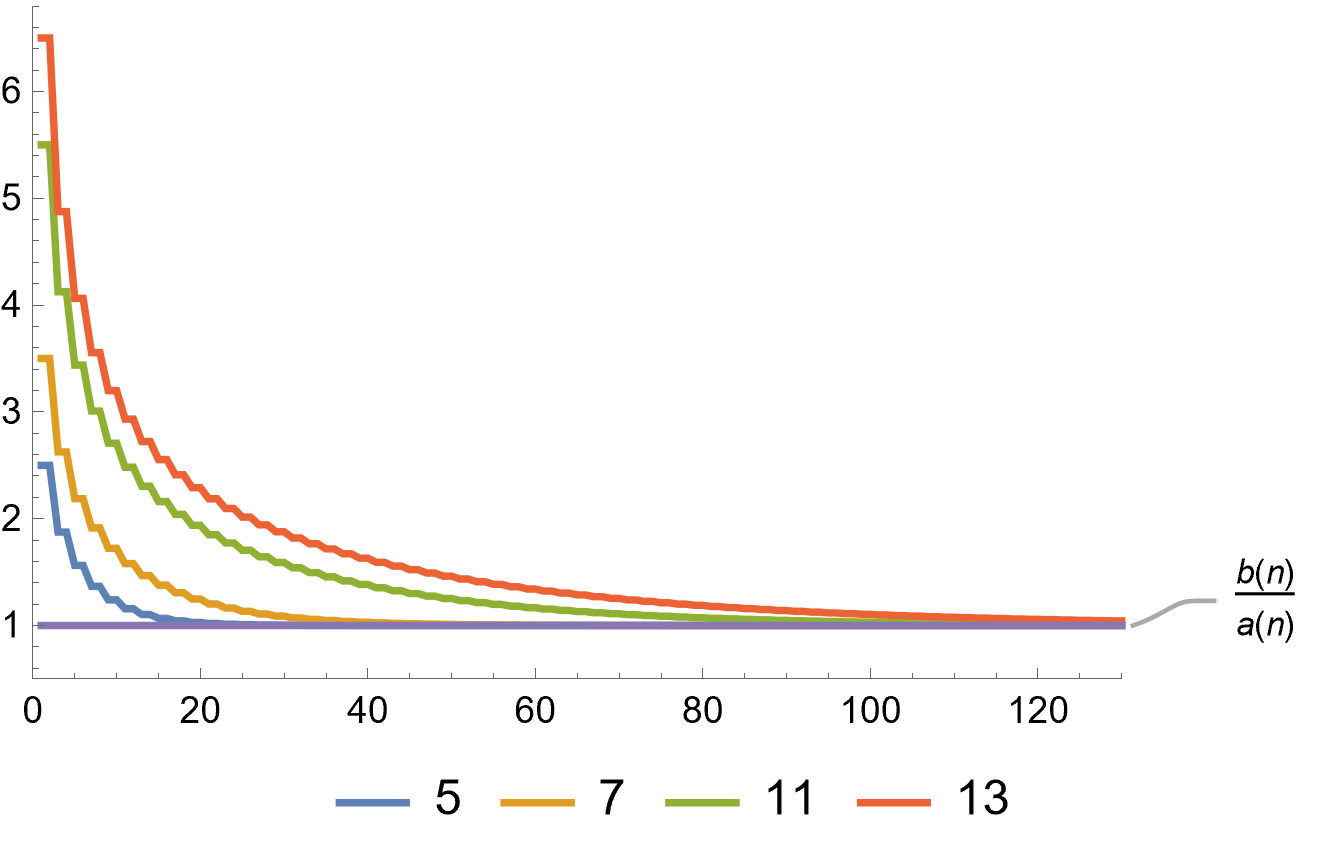}
    \caption{$l=2$, varying $p$.}
    \label{fig:slowing conv}
    \end{minipage}
\end{figure}

The next result easily follows from Theorem \ref{cyclic}.
\begin{Proposition}
Let $p$ be an odd prime such that $p=2n+1$. Let $\lambda_l^{\mathrm{sec}}$ be as before. For $1\le l \le n$, we have $\lim\limits_{p \to \infty}\lvert \lambda_l^{\mathrm{sec}}\rvert = \lim\limits_{p \to \infty}\lvert \lambda_{p-l}^{\mathrm{sec}}\rvert \to l.$ In particular, {we have} $$ \frac{\lvert \lambda_l^{\mathrm{sec}}\rvert }{l} \xrightarrow{ p \to \infty } 1.$$     
\end{Proposition}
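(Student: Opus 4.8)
The plan is to read the result off from the closed form in Theorem~\ref{cyclic} and then carry out an elementary limit computation. First I would record that since $p = 2n+1$, the limit $p \to \infty$ is the same as $n \to \infty$, and that for any fixed $l$ the constraint $1 \le l \le n$ holds for all sufficiently large $n$, so the statement is meaningful. By Theorem~\ref{cyclic} we then have, for all large enough $n$,
$$\lvert \lambda_l^{\mathrm{sec}}\rvert = \lvert \lambda_{p-l}^{\mathrm{sec}}\rvert = \phi_n(l) = \frac{\sin\big(l\pi/(2n+1)\big)}{\sin\big(\pi/(2n+1)\big)}.$$

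Next I would set $x_n := \pi/(2n+1)$, note that $x_n \to 0^+$, and factor
$$\phi_n(l) = l\cdot\frac{\sin(l x_n)}{l x_n}\cdot\frac{x_n}{\sin x_n}.$$
Applying the standard limit $\sin t / t \to 1$ as $t \to 0$ to each of the two fractions (with $t = l x_n$ and $t = x_n$ respectively) gives $\phi_n(l) \to l$, which is the first assertion; dividing through by $l$ yields the ``in particular'' claim.

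Alternatively, one could exploit the interpretation of $\phi_n(l)$ as $\lvert 1 + \zeta + \cdots + \zeta^{l-1}\rvert$ with $\zeta = \exp\big(2\pi i/(2n+1)\big)$: since $\lvert \zeta^j - 1\rvert \le j\,\lvert \zeta - 1 \rvert \to 0$ for each $0 \le j \le l-1$, the triangle inequality shows that this sum differs from $l$ by at most $\sum_{j=0}^{l-1}\lvert \zeta^j - 1\rvert \to 0$. Either route makes clear that there is no genuine obstacle here — all the content sits in Theorem~\ref{cyclic}, and the only points deserving a word of care are that $l$ is held fixed while $n \to \infty$ and that the elementary trigonometric limit is being applied to a ratio of sines rather than to a single sine.
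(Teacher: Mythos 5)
Your proposal is correct and matches the paper's intent: the paper simply asserts that the Proposition ``easily follows from Theorem \ref{cyclic},'' and your limit computation $\phi_n(l)=l\cdot\frac{\sin(lx_n)}{lx_n}\cdot\frac{x_n}{\sin x_n}\to l$ with $x_n=\pi/(2n+1)\to 0$ is exactly the elementary argument being left implicit. No issues.
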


We illustrate below in Table \ref{Tab:ratio of convergence} how for fixed $l$, the ratio ($\lvert \lambda_l^{\mathrm{sec}}\rvert /l$) varies as $p$ increases. The numbers are rounded to four decimal places. We also plot the ratio $b(n)/a(n)$ for the two-dimensional irreducible $kC_p$ module $V_2$ as we increase $p$ in Figure \ref{fig:slowing conv}. Already for $p=13$ the convergence is very slow (in this case $\lvert \lambda_2^{\mathrm{sec}}\rvert /2 \approx 0.9709$).

\begin{table}[H]
\begin{tabular}{c|c c c c c c c c c c c}
   $p$  & $5$ & $7$ & $11$ & $13$ & $17$ & $19$ & $23$ & 29\\
     \hline
   $\lvert \lambda_2^{\mathrm{sec}}\rvert /2$  & $0.8090$ & $0.9010$ & $0.9595$ & $0.9709$ & $0.9830$ & $0.9864$ & $0.9907$ & $0.9941$\\
   $\lvert \lambda_3^{\mathrm{sec}}\rvert /3$ & $0.5393$ & $0.7490$ & $0.8941$ & $0.9236$ & $0.9550$ & $0.9639$ & $0.9753$ & $0.9844$ \\
   $\lvert \lambda_4^{\mathrm{sec}}\rvert /4 $ & $0.25$ & $0.5617$ & $0.8072$ & $0.8597$ & $0.9166$ & $0.9329$ & $0.9539$ & $0.9709$\\
   $\lvert \lambda_5^{\mathrm{sec}}\rvert /5$ & $0$ & $0.3604$ & $0.7027$ & $0.7814$ & $0.8686$ & $0.8940$ & $0.9269$ & $0.9537$\\
 \end{tabular}
 \caption{{The ratio $\lvert \lambda_w^{\text{sec}}\rvert /w$ for the $w$-dimensional indecomposable $kC_p$ module.}}
 \label{Tab:ratio of convergence}

\end{table}

We now prove Theorem \ref{cyclic}.
\begin{proof}[Proof of Theorem \ref{cyclic}]

We need the following lemma: 
\begin{Lemma} \label{eigenvalue cyclic}
For $2 \le l \le p-1$, set $l=2k$ if $l$ is even, and $l=p-2k$ if odd, and let $\omega=\exp(2\pi i/{p})$. Then the eigenvalues of $M_l$ apart from $\dim V_l = l$ are $$ \pm \sum_{j=1}^{2k}\big(\omega^{\frac{p-2k+2j-1}{2}}\big)^m, 1\le m \le (p-1)/2,$$ if $l$ is even. If $l$ is odd, only the negative sums are eigenvalues, with multiplicity 2.      
\end{Lemma}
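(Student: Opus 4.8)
The goal is to compute the spectrum of the action matrix $M_l$ of the indecomposable $kC_p$-module $V_l$, where $p = 2n+1$. The key structural input is Lemma \ref{renaud}: tensoring with $V_l$ sends non-projective indecomposables to a ``window'' of consecutive $V_j$'s (plus some copies of the projective $V_p$), and there is no path leaving the projective vertex. So in the basis $(V_1,\dots,V_p)$ the matrix $M_l$ is block lower-triangular: the top-left $(p-1)\times(p-1)$ block describes the action on $V_1,\dots,V_{p-1}$ modulo projectives, and the last row/column carries the eigenvalue $\dim V_l = l$ coming from the projective cell (which is a single vertex here). Hence the eigenvalues of $M_l$ other than $l$ are exactly the eigenvalues of this truncated $(p-1)\times(p-1)$ matrix $\overline{M_l}$, which records multiplicities of $V_{s-r+2i-1}$, $1 \le i \le c$, in $V_r \otimes V_l$ with all indices in $\{1,\dots,p-1\}$.

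\textbf{Main step: diagonalize $\overline{M_l}$ via characters of $C_p$ over $\C$.} The clean way to see the eigenvalues is to lift to characteristic zero. The Grothendieck group of $kC_p$ modulo projectives is identified with a quotient of the representation ring of the cyclic group, and the ``reflection'' pattern $V_{s-r+2i-1}$ in Lemma \ref{renaud} is precisely the Clebsch--Gordan rule for $\mathfrak{sl}_2$ truncated by the reflection $j \mapsto 2p - j$ at the wall. Concretely, I would introduce the change of basis $V_j \mapsto \frac{\omega^{j m/2} - \omega^{-jm/2}}{\omega^{m/2}-\omega^{-m/2}}$ type ``quantum integer'' characters indexed by $m$, i.e. evaluate the (virtual Brauer) character of $V_j$ at the relevant roots of unity. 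Under this basis $\overline{M_l}$ becomes diagonal, and the eigenvalue attached to the index $m$ is the value of the character of $V_l$, namely $\sum_{j=1}^{l} \omega^{(\text{shift}+2j)m/2}$ — a sum of $l$ consecutive powers of a primitive root, up to sign. The sign $\pm$ and the parity bookkeeping in the statement ($l = 2k$ vs. $l = p-2k$) come from whether the ``window'' of length $l$ straddles the reflecting wall: when $l$ is odd the even- and odd-dimensional indecomposables split into two separate strongly-connected pieces (as noted in the text and visible in Figure \ref{fig:V3}), forcing only the negative sums to survive and doubling their multiplicity; when $l$ is even both signs appear. I would make the substitution $l = 2k$ (resp. $l = p - 2k$) precisely so that the geometric sum $\sum_{j=1}^{2k} (\omega^{(p-2k+2j-1)/2})^m$ comes out symmetric about the wall and matches the claimed closed form.

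\textbf{Execution.} First, restrict to $\overline{M_l}$ and record its entries from Lemma \ref{renaud} (the term $V_p^{\oplus(r-c)}$ is discarded). Second, for each $m$ with $1 \le m \le (p-1)/2$, exhibit the explicit (generalized) eigenvector: an eigenvector will be the vector whose $r$-th coordinate is the value at a $p$-th root of unity of the ``$\mathfrak{sl}_2$ character'' of $V_r$ reduced modulo the reflection, and check directly using Lemma \ref{renaud} and a telescoping of the geometric sum that $\overline{M_l}$ acts on it by the scalar $\pm\sum_{j=1}^{2k}(\omega^{(p-2k+2j-1)/2})^m$. Third, count: there are $(p-1)/2$ values of $m$ giving two eigenvalues each in the even case (total $p-1$, correct), and $(p-1)/2$ values giving one doubled eigenvalue each in the odd case (again $p-1$, correct), which confirms we have the full spectrum of $\overline{M_l}$ and hence of $M_l$ apart from $l$. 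The main obstacle is purely organizational: handling the reflection at the wall $j \mapsto 2p-j$ correctly, i.e. verifying that the formula $V_r \otimes V_l$ with its two regimes $r+s \le p$ and $r+s \ge p$ in Lemma \ref{renaud} really is the truncated $\mathfrak{sl}_2$ tensor rule, and tracking the sign/parity split between $l$ even and $l$ odd. Once that identification is pinned down, the eigenvalue computation is a one-line geometric series, and Theorem \ref{cyclic} then follows because $|\lambda^{\mathrm{sec}}_l|$ is the maximum over $1 \le m \le (p-1)/2$ of $\left|\sum_{j=1}^{l}\omega^{jm}\right| = |\sin(l\pi m/p)/\sin(\pi m/p)|$, which (for $l \le n$) is maximized at $m=1$, giving $\phi_n(l) = \sin(l\pi/(2n+1))/\sin(\pi/(2n+1))$, and the symmetry $|\lambda^{\mathrm{sec}}_l| = |\lambda^{\mathrm{sec}}_{p-l}|$ is immediate from the shared truncated module $V_l$ and $V_{p-l}$ differing by the projective.
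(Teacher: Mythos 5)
Your proposal takes essentially the same route as the paper: both arguments diagonalize the fusion action by evaluating ``character-like'' quantities of $V_r$ at $p$-th roots of unity, and both reduce the verification to Lemma~\ref{renaud} plus a geometric series. The paper's proof is more elementary and direct: it writes down the two families of left eigenvectors $\mathbf{x}^m$ and $\mathbf{y}^m$ of the full $p\times p$ matrix $M_l$ (with $\mathbf{x}^m_p=\mathbf{y}^m_p=0$, which is exactly your block-triangular truncation in disguise), observes that the first column of $M_l$ forces the eigenvalue attached to a left eigenvector $w$ with $w_1=1$ to be $w_l$, and then reduces everything to checking the multiplicativity $w_r w_s = \sum_{t=1}^c w_{s-r+2t-1} + (r-c)w_p$ directly from Renaud's formula — precisely the ``telescoping'' step you describe. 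What your framing adds is the conceptual identification of these eigenvectors with truncated $\mathfrak{sl}_2$ quantum characters and the reflection at the wall, which explains \emph{why} the $\mathbf{x}^m$ and $\mathbf{y}^m$ look the way they do, whereas the paper simply exhibits them and verifies; what the paper's version buys is that the check is self-contained and avoids having to justify the precise match between Lemma~\ref{renaud} and the truncated Clebsch--Gordan rule (a point you flag yourself as the main obstacle and leave unexecuted). Your eigenvalue count and the parity split between even and odd $l$ agree with the paper, so the plan is sound; to make it a complete proof you would still need to carry out the multiplicativity check, which is exactly what the paper does.
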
  
\begin{proof}
Since the first column of $M_l$ has a 1 in the $l$-th entry and nowhere else, it suffices to show that the left eigenvectors other than $[1,\dots, p]$ come in two series which are independent of $l$:   
\begin{enumerate}
    \item $\x^m$ for $1\le m \le (p-1)/2$, given by $\x^m_1=1$, $\x^m_p=0$,  $\x^m_{2k} = \sum_{j=1}^{2k}\left(\omega^{\frac{p-2k+2j-1}{2}}\right)^m$ and $\x^m_{p-2k}=-\x^m_{2k}$, where by $\x^m_j$ we denote the $j$-th entry of $\x^m$, and 
    \item $\y^m$ for $1\le m \le (p-1)/2$, with $\y^m_1=1,\y^m_p=0$, $\y^m_{2k} = \y^m_{p-2k} = \x^m_{p-2k}$.\end{enumerate}  
That these are (all the) left eigenvectors can be verified by tedious but straightforward calculation: one shows that each $\x^m$ and $\y^m$ satisfies the formula given in Lemma \ref{renaud} in the sense that for each $m$ and
$1\le r\le s \le p$, $$\x^m_r \cdot \x^m_s = \sum_{{t}=1}^c \x^m_{s-r+2{t}-1}+(r-c)\x^m_p \text{\ , where } c= \begin{cases}
    r & r+s\le p,\\
    p-s & r+s\ge p 
\end{cases}$$
and similarly for each $\y^m$.
\end{proof}
Lemma \ref{eigenvalue cyclic} implies that eigenvalues for $V_l$ (other than the dimension) are all sums of $l$ different roots of unity. It is easy to see that the the modulus of the sum of $l$ different $p$-th roots of unity attains its maximum when these roots are consecutive (when thought of as evenly spaced points on the unit circle), which now implies the statement of Theorem \ref{cyclic}.  
\end{proof}
\subsection{Klein four group}
Let $G=V_4$ be the Klein four group, and take $k$ to be an algebraically closed field of characteristic 2. Again Corollary \ref{p group} applies, so we focus on the second largest eigenvalue. We have that $kG$ is of tame representation type, and the indecomposable $kG$-modules are completely classified, see \textit{e.g.} \cite[Section 11.5]{webb2016course}. The growth problem for the special case $M_3$ has been studied in \cite[Example 4.14; Example 5.16]{lacabanne2024asymptotics}. 

\begin{Theorem}[Klein four group]
Let $G=V_4$. We summarise our results for the faithful indecomposable $kG$-modules below:
\begin{center}
    \noindent
\begin{tabular}{c|c|c|c|c}
    Type & Dimension & Diagram & Action matrix &  $\lambda^{\mathrm{sec}}$ \\
     \hline
     $M_{2m+1}$ (and its dual) & $2m + 1$ &
     $\left(\vcenter{\hbox{
         \adjustbox{scale=0.5}{%
         \begin{modularrep}
                    & \arrow[dl] {} \arrow[dr] & & \arrow[dl] {} \\
                    {} & & {}
                \end{modularrep}
                $\cdots$
                \begin{modularrep}
                    {} \arrow[dr] \\
                    & {}
                \end{modularrep}}
            }}\right)$ & \tiny $\begin{pmatrix}
      2m+1 & 0 & m^2 & 2m^2 & 3m^2 & \cdots \\
      0 & 0 & 0 & 0 & 0 &  \cdots \\
     0 & 1 & 0 & 0 & 0 &   \cdots\\
        0 & 0 & 1 & 0 & 0  & \cdots\\
        0 & 0 & 0 & 1 & 0  & \cdots\\
        \vdots  & \vdots & \vdots & \vdots & \vdots & \ddots
    \end{pmatrix}$ 
    & $0$\\
    \hline
    \begin{tabular}{@{}l@{}}
        $E_{f,m}$ (and \\ $E_{0,m},E_{\infty,m}, m\neq 1$)
    \end{tabular} &
    $2ml$ &
    $\left(\vcenter{\hbox{
        \adjustbox{scale=0.5}{%
                \begin{modularrep}
                    & \arrow[dl] {} \arrow[dr] & & \arrow[dl] {} \\
                    {} & & {}
                \end{modularrep}
                $\cdots$
                \begin{modularrep}
                    & \arrow[dl] {} \arrow[dr] \\
                    {} & & |[fill=none]|
                \end{modularrep}}
            }}\right)$ & \tiny $\begin{pmatrix}
      0 & 0 & 0\\
      1 & 2 & 0\\
      0 & ml(ml-1)&2ml
     \end{pmatrix}$ & $2$ \\
     \hline
     $E_{f,1}, l \neq 1 $ & $2l$ & same as above & \tiny $\begin{pmatrix}
       0 & 0 & 0 & 0\\
       0 & 2l & l(l-1) & l(2l-1)\\
       1 & 0 & 0 & 2\\
       0 & 0 & 1 & 0
     \end{pmatrix}$ & $\pm \sqrt{2}$ \\
     \hline
     $kV_4$ & $4$ & $\left(\vcenter{\hbox{
         \adjustbox{scale=0.5}{%
                \begin{modularrep}
                    & \arrow[dl] {} \arrow[dr] & \\
                    {} \arrow[rd] & & \arrow[dl] {} \\
                    & {}
                \end{modularrep}}
            }}\right)$ & \tiny $\begin{pmatrix}
      0 & 0\\
      1 & 4
     \end{pmatrix}$& $0$
    \end{tabular}
\end{center}   
The infinite matrix is with respect to the basis
$(kV_4,M_1,M_{2m+1},M_{4m+1},M_{6m+1},\hdots)$. The other three matrices are with respect to $(M_1, E_{f,{m}},kV_4),(M_1,kV_4,E_{f,1},E_{f,2})$ and $(M_1,kV_4)$ respectively, where we allow $f$ to be replaced by $0$ or $\infty$.

For the cases with finite matrices, we also have the exact growth rate $b(n)$:
\begin{center}
    \begin{tabular}{c|c}
       Type  & $b(n)$  \\
       \hline
       $E_{f,m},E_{0,m},E_{\infty,m},m\neq 1$  & $\frac{1}{4}\cdot (2ml)^n+\frac{2-ml}{4}\cdot 2^n$\\
       \hline
       $E_{f,1},l\neq 1$ & $\frac{1}{4}\cdot (2l)^n+\frac{l(\sqrt{2}-2)-2\sqrt{2}+2}{8}\cdot (-\sqrt{2})^n+\frac{l(-2-\sqrt{2})+2+2\sqrt{2}}{8}\cdot (\sqrt{2})^n$\\
       \hline
       $kV_4$ & $4^{n-1}=a(n).$
    \end{tabular}
\end{center}

\end{Theorem}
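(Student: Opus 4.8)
The plan is to treat the four rows of the table as essentially four separate but parallel computations, each reducing to: (i) writing down the action matrix $M$ from the known tensor product rules for indecomposable $kV_4$-modules, (ii) computing its eigenvalues (hence $\lambda^{\mathrm{sec}}$), and (iii) when $M$ is finite, applying the eigendecomposition to extract $b(n)$ as the entry sum over the column indexed by the trivial module. For the background on the indecomposable $kV_4$-modules I would cite \cite[Section 11.5]{webb2016course}: the relevant families are the odd-dimensional $M_{2m+1}$ (the syzygies of the trivial module, which are periodic of period $1$ up to $\Omega$), the even-dimensional string/band modules $E_{f,m}$ (with the degenerate cases $E_{0,m},E_{\infty,m}$), and the free module $kV_4$. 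The key structural input is that tensoring with $kV_4$ (or any projective) lands in projectives, so the rows/columns corresponding to $kV_4$ and to the other projective summands decouple in the predictable block-triangular way displayed in the table; this is exactly the ``projective cell'' phenomenon recalled in Section \ref{gen section}.

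For the $M_{2m+1}$ row, I would first establish the decomposition $M_{2m+1}\otimes M_{2j+1}\cong M_{2(m+j)+1}\oplus (\text{free})^{\oplus ?}$ and $M_{2m+1}\otimes kV_4\cong kV_4^{\oplus(2m+1)}$, feeding the $m^2,2m^2,3m^2,\dots$ pattern in the first row of the infinite matrix; since the matrix is upper-triangular apart from the subdiagonal of $1$'s shifting $M_{2j+1}\mapsto M_{2(j+1)+1}$, its only nonzero eigenvalue is $2m+1$ (all others are $0$), giving $\lambda^{\mathrm{sec}}=0$. For the three finite cases one diagonalizes the explicit $3\times3$ or $4\times4$ matrices directly. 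The $E_{f,m}$ (and $E_{0,m},E_{\infty,m}$, $m\neq1$) matrix $\begin{psmallmatrix}0&0&0\\ 1&2&0\\ 0& ml(ml-1)&2ml\end{psmallmatrix}$ is lower-triangular with eigenvalues $0,2,2ml$, so $\lambda^{\mathrm{sec}}=2$; the eigendecomposition then yields $b(n)=\tfrac14(2ml)^n+\tfrac{2-ml}{4}2^n$ after checking the column sums of the spectral projectors. Similarly the $E_{f,1},l\neq1$ matrix has characteristic polynomial factoring as $\lambda\,(\lambda-2l)(\lambda^2-2)$, giving eigenvalues $0,2l,\pm\sqrt2$ and hence $\lambda^{\mathrm{sec}}=\pm\sqrt2$; the stated $b(n)$ follows by the same spectral-projector bookkeeping. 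The $kV_4$ row is immediate: $kV_4\otimes kV_4\cong kV_4^{\oplus4}$, so $M=\begin{psmallmatrix}0&0\\1&4\end{psmallmatrix}$, eigenvalues $0,4$, $\lambda^{\mathrm{sec}}=0$, and $b(n)=4^{n-1}=a(n)$.

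The main obstacle I anticipate is getting the tensor-product multiplicities exactly right — in particular the quadratic coefficients $ml(ml-1)$ and $l(l-1),l(2l-1)$ governing how many free summands appear in $E_{f,m}\otimes E_{f,m}$ (resp. $E_{f,1}\otimes E_{f,1}$), and the $m^2,2m^2,\dots$ coefficients for $M_{2m+1}$. These come from dimension counting combined with the known structure of the stable module category of $kV_4$ (e.g. that $\underline{\Hom}$ in the stable category is governed by the action on the $\mathbb{P}^1$ of shifted subgroups), and one must verify that no other non-projective indecomposable appears — i.e. that $M_{2m+1}\otimes M_{2j+1}$ has a \emph{single} non-projective summand and the band modules tensor into band modules of the same ``band parameter'' $f$ plus projectives. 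I would justify this either by a direct matrix/diagram computation in the string-module calculus or by invoking \cite[Section 11.5]{webb2016course} together with the classification of endotrivial and string modules; the remainder of the proof is then routine linear algebra, and the $\lambda^{\mathrm{sec}}$ column as well as the $b(n)$ formulas drop out of the explicit spectra. Consistency with the general theory is guaranteed since $V_4$ is a $2$-group, so Corollary \ref{p group} already forces $a(n)=\tfrac14(\dim V)^n$, matching the leading terms above.
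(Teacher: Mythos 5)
Your proposal is correct and follows essentially the same route as the paper: obtain the action matrices from the tensor product decompositions of the indecomposable $kV_4$-modules, then read off eigenvalues and extract $b(n)$ by linear algebra on the finite matrices. The one step you flag as the main obstacle --- pinning down the multiplicities and verifying that $E_{f,m}\otimes E_{f,m}$ returns copies of $E_{f,m}$ plus projectives rather than other band modules --- is handled in the paper simply by citing the explicit tensor product formulas of Conlon \cite[Section 3]{conlon1965certain} (where $E_{f,m}$ appears as $C_n(\pi)$ and $M_{2n+1}$ and its dual as $A_n$, $B_n$), which you could invoke in place of re-deriving them from the stable module category.
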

We have used the diagrammatic notation used in \textit{e.g.} \cite[Section 11.5]{webb2016course}. To briefly explain, $m$ is a positive integer, and $f \in k[x]$ is an irreducible polynomial of degree $l$. In the diagrams, each node $v_i$ represents a basis element of the underlying vector space. Let $G=\langle a\rangle \times \langle b\rangle$, then a southwest (resp. southeast) arrow from $v_i$ to $v_j$ means that $a-1$ (resp. $b-1$) acts by mapping $v_i$ to $v_j$. To understand the diagram for the family ${E_{f,m}}$, label the $ml$ vertices in the first row by $u_0,\hdots,u_{ml-1}$, and the vertices in the second row by $v_0,\hdots,v_{ml-1}$. Let $(f(x))^m=x^{ml}+a_{ml-1}x^{ml-1}+\hdots+a_0$. The arrow extending from $u_{ml-1}$ is then to be interpreted as follows: it represents that
$$(b-1)(u_{ml-1})=-a_{ml-1}v_{ml-1}-\hdots-a_1v_1-a_0v_0.$$

We set $l=1$ for $E_{0,m}$ or $E_{\infty,m}$. We have omitted the cases where the module is not faithful, namely $E_{0,1},E_{\infty,1}$, and $E_{f,1}$ where $f$ has degree $l=1$. 

\begin{proof}
The matrices above are obtained using the tensor product formulas in \cite[Section 3]{conlon1965certain}. (In the notation there, $E_{f,m}$ is called $C_{n}(\pi)$, and $M_{2n+1}$ and its dual are called $A_n$ and $B_n$.) Everything else follows from straightforward calculation.    
\end{proof}

For each of the infinite families, $\lambda^{\mathrm{sec}}$ is independent of the dimension of the $kG$-module. This implies, in particular, that ${\lvert \lambda^{\mathrm{sec}}\rvert}/{\dim V}\to 0$ as $\dim V \to \infty$, and so for each of the families with finite graph the {ratio of convergence can be made arbitrarily small} by taking a $kG$-module with large enough dimension. In fact, as $2,\sqrt{2}$ are rather small numbers, the convergence is already very fast when the dimension is 6, see Figure \ref{fig:Klein finite}.

\begin{figure}[h!]
    \centering
    \includegraphics[width=0.5\linewidth]{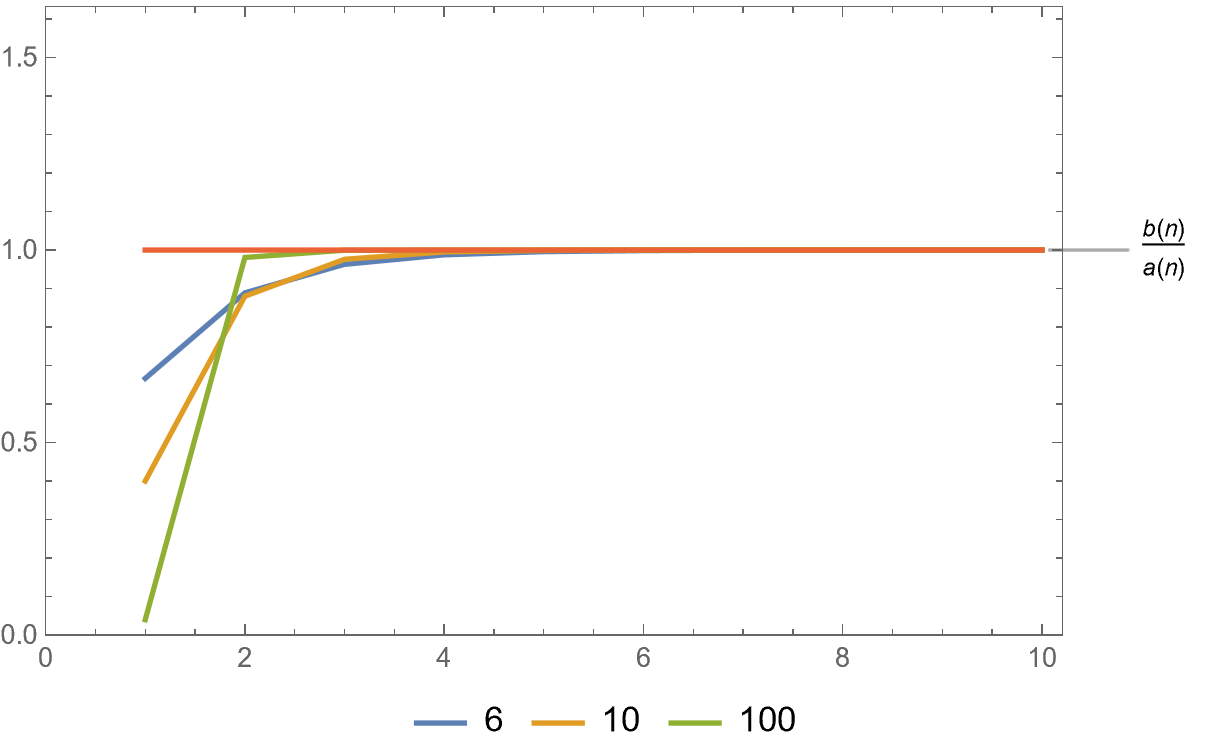}
    \caption{The ratio $b(n)/a(n)$ for $E_{f, m},m\neq 1$ with dimensions $2ml=6,10,100$. The ratio of convergence is ${\lvert \lambda^{\mathrm{sec}}}\rvert/{\dim E_{f,m}}={2}/{2ml}$ which goes to 0 as $2ml\to \infty$. We also note that when $2ml=4$ we have $b(n)=a(n)$.}
    \label{fig:Klein finite}
\end{figure}

\begin{Remark} \label{infinite sec}
Recall that in the infinite case, a subexponential correction factor is introduced so that $\lvert b(n)-a(n)\rvert \in \mathcal{O}(n^{d}+(\lambda^{\mathrm{sec}})^n)$ for some $d>0$. Thus, even though in the $M_{2m+1}$ case we have $\lambda^{\mathrm{sec}}=0$, we do not have a closed form for $b(n)$. See also \cite[Example 5.16]{lacabanne2024asymptotics} for an illustration.    
\end{Remark}

\section*{Appendix} \label{appendix}
\renewcommand{\thesubsection}{\Alph{subsection}}
\setcounter{subsection}{0}

This appendix discusses how we have used Magma to compute various formulas. We display the code in blue and the output in red. The Magma code below can all be directly pasted into, for example, the online Magma calculator here: \url{http://magma.maths.usyd.edu.au/calc/}.

\subsection{Computing the exact formula using Magma}
The following code asks Magma to compute $b(n)$ for a $\C G$-module $V$ with character $\chi$, by summing over $\langle \chi^n, \chi_j \rangle$ where $\{\chi_1,\dots,\chi_N\}$ is a complete set of irreducible characters for $G$. This gives the values which we hope to find a formula for. 


\bigskip

\begin{small}
{ 
\color{blue}%
\begin{verbatim}
> G:=DihedralGroup(6); //Take any group
> X:=CharacterTable(G);
> V:=X[5]; //Fix a representation
> m:=10; //Maximal tensor power 
> outlist:=[];

> for n in [1..m] do
> out:=0;
> for j in [1..#X] do
> out+:=InnerProduct(V^n,X[j]); //Sum over the inner products
> end for;
> outlist:=Append(outlist,out);
> end for;
> outlist;
\end{verbatim}
}
{\color{red} 
\begin{verbatim}
[1, 3, 5, 11, 21, 43, 85, 171, 341, 683]
\end{verbatim}
}
\end{small}

\bigskip

Next, we compute $b(n)$ using Theorem \ref{exact formula}. First we calculate the column sums, then we plug them and the values of $\chi$ into the formula.

\bigskip

\begin{small}
{\color{blue}
\begin{verbatim}
>col:=[];
>for j in [1..#X] do
>value:=0;
>for i in [1..#X] do
>value+:=X[i][j]; //Sum over all character values in the j-th column
>end for;
>col:=Append(col,ClassesData(G)[j][2]*value); //Scale by conjugacy class size
>end for;
  
>result:=[];
>for n in [1..m] do
>entry:=0;
>for j in [1..#X] do
>entry:= entry + col[j]*(V[j])^n;
>end for;
>result:=Append(result,(1/Order(G))*entry); //Plug into the formula
>end for;
>result; 
\end{verbatim}
}
{\color{red}
\begin{verbatim}
[1, 3, 5, 11, 21, 43, 85, 171, 341, 683]
\end{verbatim}
}
\end{small}

This verifies Theorem \ref{exact formula} in this particular case.

\subsection{Computing the action matrix and asymptotic formula using Magma} \label{action magma}
Given a $kG$-module $V$, we can also ask Magma to compute (a finite submatrix of) the corresponding action matrix $M(V)$.  In the nonsemisimple case $M(V)$ could be infinite, so we will specify a cutoff value $k$ and ask Magma to compute the submatrix $M_k(V)$. First we need the vertex set of the subgraph $\Gamma_k$, consisting of all pairwise nonisomorphic indecomposable summands of $V^{\otimes l}, 0\le l \le k$, which gives a basis for $M(V)$. We used the code below to computing the matrix, the eigenvalues, and the asymptotic growth rate for Example \ref{irr counterexample}.

\bigskip

\begin{small}
{
\color{blue}
\begin{verbatim}
>G:=SL(2,7);
>k:=GF(7); //Take a large enough field of characteristic 7
>Irr:=IrreducibleModules(G,k); //All irreducible kG-modules 
>M:=Irr[4]; //Take the 4 dimensional module

>X:=[Irr[1]]; //The vertex set, initially containing only the trivial module
>nn:=0;
>max:=12; //Choose a cutoff value
>while nn lt #X and nn le max do
>nn+:=1;
>M2:=TensorProduct(X[nn],M);
>XM2:=IndecomposableSummands(M2); //Tensor with existing vertices to get new modules
>for j in [1..#XM2] do
>new:=1;
>for i in [1..#X] do
>if(IsIsomorphic(XM2[j],X[i])) then //Check if the vertices are already in the list
>new:=0;
>end if;
>end for;
>if(new eq 1) then X:=Append(X,XM2[j]); //Add the new modules to the list 
>end if;
>end for;
>end while;

>fus:=[[0 : i in [1..#X]] : j in [1..#X]];
>for i in [1..#X] do
>for j in [1..#X] do
>Z:=IndecomposableSummands(TensorProduct(X[i],M));
>z:=0;
>for k in [1..#Z] do
>if(IsIsomorphic(X[j],Z[k])) then z:=z+1; end if;
>end for;
>fus[j][i]:=z; //Create the action matrix
>end for;
>end for;
>Ma:=Matrix(CyclotomicField(56),fus); //Use a cyclotomic field to get all eigenvalues
>Ma;
\end{verbatim}
}
{\color{red}
\begin{verbatim}

[0 1 0 0 0 0 0 0 0 0 0 0 0]
[1 0 1 1 0 0 0 0 0 0 0 0 0]
[0 1 0 0 0 1 1 0 0 0 0 0 0]
[0 1 0 0 0 1 0 0 0 0 0 0 0]
[0 1 0 0 0 0 1 2 3 0 0 0 2]
[0 0 1 1 0 0 0 0 0 0 0 0 0]
[0 0 1 0 0 0 0 0 0 0 0 0 0]
[0 0 0 1 1 0 0 0 0 2 2 1 0]
[0 0 0 0 1 0 0 0 0 1 1 1 0]
[0 0 0 0 0 0 1 2 1 0 0 0 2]
[0 0 0 0 0 0 0 1 1 0 0 0 1]
[0 0 0 0 0 0 0 0 1 0 0 0 0]
[0 0 0 0 0 0 0 0 0 1 1 0 0]
\end{verbatim}}
{\color{blue}
\begin{verbatim}
    
>Eigenvalues(Ma);
\end{verbatim}
}
{\color{red}
\begin{verbatim}
{
    <zeta_56^20 - zeta_56^16 + zeta_56^12 - zeta_56^8, 1>,
    <-1, 1>,
    <-zeta_56^20 + zeta_56^8 + 1, 1>,
    <-zeta_56^20 + zeta_56^16 - zeta_56^12 + zeta_56^8, 1>,
    <-4, 1>,
    <1, 1>,
    <-zeta_56^16 + zeta_56^12 - 1, 1>,
    <zeta_56^20 - zeta_56^8 - 1, 1>,
    <zeta_56^16 - zeta_56^12 + 1, 1>,
    <0, 3>,
    <4, 1>
}
\end{verbatim}   
}
\end{small}
(Magma represents the primitive root of unity $\exp(2\pi i/m)$ by \verb|zeta_m|, so the non-integral eigenvalues are all combinations of $56$-th roots of unity.)
\bigskip

In this case the action matrix is finite by \cite[Theorem B]{craven2013tensor}, and we have obtained it. In general, we would only have a submatrix which, when $k$ is large, approximates the complete matrix. From the above we can see that there are two eigenvalues with modulus equal to $\dim V=4$, so there will be two terms in the formula for $a(n)$ by Theorem \ref{genformula}. We use left and right eigenvectors to work out the coefficients in front of these terms. We note that by default Magma computes the left eigenspaces, so to get the right eigenvectors we need to first transpose the matrix. 

\bigskip

\begin{small}
{\color{blue}
\begin{verbatim}
>w0:=Eigenspace(Ma,Dimension(M)).1; //Compute the left eigenvector w_0^{T} 
>vv0:=Eigenspace(Transpose(Ma),Dimension(M)).1; 
>v0:=vv0/ScalarProduct(w0,vv0); //Normalised right vector so that w_0^{T}v_0=1
>&+[(Matrix(Rationals(),#X,1,ElementToSequence(v0))*Matrix(Rationals(),1,#X,
ElementToSequence(w0)))[i][1] : i in [1..#X]]; //Sum over first column of v_0w_0^{T}   
\end{verbatim}
}
{\color{red}
\begin{verbatim}
1/12
\end{verbatim}
}
\end{small}

\bigskip

The constant $1/12$ is $ \sum_{W \in \text{Irr}_k(G)}\dim W/\lvert G \rvert$. We can check this:

\bigskip

\begin{small}
{\color{blue}
\begin{verbatim}
> &+[Dimension(Irr[i]):i in [1..#Irr]]/Order(G)
\end{verbatim}
}
{\color{red}
\begin{verbatim}    
1/12
\end{verbatim} }   
\end{small}

\bigskip

Similarly, we can calculate $v_1w_1^T[1]$ to be $1/84$, so we have
\begin{equation} \label{eqn: magma example}
a(n)=\Big(\frac{1}{12}+\frac{1}{84}(-1)^n\Big)\cdot 4^n.    
\end{equation}

Next we check that $b(n)/a(n)$ converges to 1. Once we have the action matrix, it is easy to compute $b(n)$: we only have to sum over the first column of $\big(M(V)\big)^n$. 

\begin{small}
{\color{blue}
\begin{verbatim}
>m:=15;
>exact:=[];
>for n in [1..m] do
>exact:=Append(exact,&+[(Ma^n)[i][1] : i in [1..#X]]); 
>end for; 
>exact;
\end{verbatim}
}
{\color{red} \begin{verbatim}
[1, 4, 9, 35, 96, 442, 1286, 6502, 19309, 101178, 302544, 1604461, 4808389, 25598759,
76771067]
   
\end{verbatim} 
}   
\end{small}

To compute $a(n)$, we simply use \eqref{eqn: magma example}.

\begin{small}

{\color{blue}\begin{verbatim}
>asym:=[];
>for n in [1..m] do
>asym:=Append(asym, RealField(9)!(1/12+(-1)^n/84)*4^n);
>end for;
>asym;
\end{verbatim}
}
{\color{red} \begin{verbatim}
[0.285714286, 1.52380952, 4.57142857, 24.3809524, 73.1428572, 390.095238,
1170.28572, 6241.52381, 18724.5714, 99864.3810, 299593.143, 1597830.10,
4793490.29, 25565281.5, 76695844.6]
\end{verbatim}
}

{\color{blue} \begin{verbatim}
>ratio:=[];
>for n in [1..m] do
>ratio:=Append(ratio,exact[n]/asym[n]);
>end for;
>ratio;
\end{verbatim}
}
{\color{red} 
\begin{verbatim}
[3.50000000, 2.62500000, 1.96875000, 1.43554688, 1.31250000, 1.13305664,
1.09887695, 1.04173279, 1.03121185, 1.01315403, 1.00984955, 1.00414994,
1.00310811, 1.00130949, 1.00098079]
\end{verbatim} }
\end{small}

The ratio converges to $1$ as we expect. We can paste the list of ratios into Mathematica to create the graph in Figure \ref{fig:sl2,7 4d ratio}. We briefly explain how this is done below. 

\subsection{Graphing using Mathematica}
The following Mathematica code is used to create Figure \ref{fig:sl2,7 4d ratio} based on the Magma output obtained above: we plot the ratio $b(n)/a(n)$ as well as the horizontal line valued at 1, to illustrate the geometric convergence $b(n)\sim a(n)$.

\begin{small}
{\color{blue}
\begin{verbatim}
Ratio={3.50000000, 2.62500000, 1.96875000, 1.43554688, 1.31250000, 1.13305664,
1.09887695, 1.04173279, 1.03121185, 1.01315403, 1.00984955, 1.00414994,
1.00310811, 1.00130949, 1.00098079}
Line=Table[1,15]
ListLinePlot[{Ratio,Line},PlotRange->{{0,15},{0.8,3.7}},Frame->True, 
PlotLabels -> {Style[ToExpression["b(n)/a(n)",TeXForm,HoldForm]]}]
   
\end{verbatim}
}
\end{small}

\end{document}